\theoremstyle{plain}
\newtheorem{thm}{Theorem}[section]
\newtheorem{lm}[thm]{Lemma}
\newtheorem{prop}[thm]{Proposition}
\theoremstyle{definition}
\newtheorem{dfn}[thm]{Definition}
\newtheorem{rmk}[thm]{Remark}
\def\Hil{\mathcal{H}\xspace}
\def\N{\mathbb{N}\xspace}
\def\Z{\mathbb{Z}\xspace}
\def\R{\mathbb{R}\xspace}
\def\C{\mathbb{C}\xspace}
\def\T{\mathbb{T}\xspace}
\newcommand{\ga}{\gamma} 
\newcommand{\Ga}{\Gamma}
\newcommand{\om}{\omega} 
\newcommand{\al}{\alpha} 
\newcommand{\si}{\sigma}
\newcommand{\la}{\lambda} 
\newcommand{\Om}{\Omega}
\newcommand{\be}{\beta} 
\newcommand{\Ci}{\mathcal{C}^{\infty}} 
\newcommand{\con}{\overline}
\newcommand{\bigo}{\mathcal{O}}
\newcommand{\Hilb}{\mathcal{H}}
\newcommand{\wt}{\widetilde}
\newcommand{\op}{\operatorname}
\newcommand{\s}{\op{sub}}
\newcommand\blfootnote[1]{%
  \begingroup
  \renewcommand\thefootnote{}\footnote{#1}%
  \addtocounter{footnote}{-1}%
  \endgroup
}
\begin{document}

\title{Quantum propagation for Berezin-Toeplitz operators}

\author{Laurent Charles, Yohann Le Floch}

\maketitle

\begin{abstract}
We describe the asymptotic behaviour of the quantum propagator generated by a
Berezin-Toeplitz operator with real-valued principal symbol. We also give
precise asymptotics for smoothed spectral projectors associated with the
operator in the autonomous case; this leads us to introducing quantum states
associated with immersed Lagrangian submanifolds. These descriptions involve
geometric quantities of two origins, coming from lifts of the Hamiltonian flow
to the prequantum bundle and the canonical bundle respectively. The latter are
the main contribution of this article and are connected to the Maslov indices
appearing in trace formulas, as will be explained in a forthcoming paper.
\end{abstract}

\blfootnote{\,\,\emph{2020 Mathematics Subject Classification.} 53D50, 81Q20, 81S10.\\
\indent\indent \emph{Key words and phrases.} Berezin-Toeplitz operators,
Schrödinger equation, Lagrangian states, Geometric quantization, semi-classical limit.}

\section{Introduction}

In quantum mechanics, the evolution of a state $\Psi_t$ under the influence of a Hamiltonian $\hat{H}$ can be described using Schr\"odinger's equation
\[ i \hbar \frac{d}{dt} \Psi_t = \hat{H} \Psi_t. \]
Under suitable assumptions on $\hat{H}$, the solutions to this equation are of the form $\Psi_t = U_{\hbar,t} \Psi_0$ where $U_{\hbar,t}$ is an operator called the quantum propagator. This propagator is the quantum analogue of the Hamiltonian flow in classical Hamiltonian mechanics. This analogy can be studied rigorously by investigating the so-called semiclassical limit $\hbar \to 0$ in which, if $\hat{H}$ quantizes the classical Hamiltonian $H$, $U_{\hbar,t}$ is expected to behave like the Hamiltonian flow of $H$. This statement has been given a precise meaning by studying the Schwartz kernel of $U_{\hbar,t}$ in different regimes of $\hbar$ and $t$, for semiclassical Schr\"odinger Hamiltonians $\hat{H} = -\hbar^2 \Delta + V$ on $T^* \R^d$, and more generally for $\hbar$-pseudodifferential operators on $T^* \R^d$ or $T^* X$ with $X$ a compact Riemannian manifold; see Section \ref{subsect:comparison} for a longer discussion and references. 

Here we are interested in a different setting where the underlying phase space is a compact symplectic manifold; then the quantum states are sections of a power of some well-chosen line bundle, and this power is the relevant semiclassical parameter. This setting naturally appears in several problems from physics, such as the study of spin systems in the large spin limit, coherent states, and the quantum Hall effect, \emph{cf.} for example \cite{Lieb,Glau,KleMaMaWi}. The limit of large power of a suitable line bundle is also very important in complex geometry, see for instance \cite{Don, RubZel, GuSt, Biq}.
 
The aim of our work is to understand, in this context, the geometric invariants appearing in the asymptotic description of the quantum propagator (and its counterparts, smoothed spectral projectors) in the semiclassical limit. As can be seen from other results in the same direction \cite{BPU,ZeZh,Ioos}, this is in fact non trivial and different authors have different, more or less explicit, ways to compute these invariants. Here we obtain expressions that are both completely natural and easily computable. This will be particularly important in forthcoming papers in which we revisit trace formulae: the explicit asymptotics that we obtain here will allow us to derive those in a direct way and with a precise control of the quantities they involve, in particular the Maslov-like indices contained in the subprincipal contributions.

\subsection{Berezin-Toeplitz operators}

Let $M^n$ be a compact complex manifold endowed with two Hermitian
holomorphic line
bundles $L$ and $L'$. We assume that $L$ is positive, meaning that the curvature of
its Chern connection is $\frac{1}{i} \om$ with $\om \in \Om ^2 ( M , \R) \cap \Om^{(1,1)}(M) $
positive.  For any
positive integer $k$, let $\mathcal{H}_k$ be the space of holomorphic sections
of $L^k \otimes L'$. The scalar product of sections of $L^k \otimes
L'$ is defined as the integral of the pointwise scalar product against the Liouville
volume form $\mu = \frac{\om^n}{n!}$.

Given a function $f \in \Ci (M)$, the
Berezin-Toeplitz operator $T_k(f)$ is the endomorphism of $\Hilb_k$ such that
$$ \langle T_k (f) u, v  \rangle = \langle f u , v \rangle, \qquad \forall u,v
\in\Hilb_k .$$
We are interested in the semi-classical limit $k\rightarrow +\infty$ and the
techniques we use allow to consider more general families $T := (T_k ( f(\cdot,
k)))$ where the multiplicator itself depends on $k$ and has an
expansion of the form $f(\cdot, k) = f_0 + k^{-1} f_1 + \ldots$ with coefficients $f_{\ell}
\in \Ci ( M)$. We will also consider time-dependent sequences $f(\cdot,
t,k)$ with an expansion with time-dependent coefficients.

We call the family $T$ a {\em Toeplitz operator}, $f_0$ its {\em principal symbol}  and $f_1 +
\frac{1}{2} \Delta f_0$ its {\em subprincipal symbol}. Here $\Delta$ is the
holomorphic Laplacian associated with the K\"ahler form $\om$, so $ \Delta = \sum h^{ij}
\partial_{z_i} \partial_{\con{z}_j}$ when $\om = i \sum h_{ij} dz_i \wedge d\con{z}_j$.
The reason why we introduce this subprincipal symbol is merely that it
simplifies the subleading calculus. 

Typically, if $T$ and $S$ are two Toeplitz
operators with principal symbols $f$ and $g$, then $TS$ and $ ik
[T,S]$ are Toeplitz operators with principal symbols $fg$ and the Poisson
bracket $\{ f, g\}$ with respect to $\om$ \cite{BoGu, BMS}. If now $T$ and $S$ have identically zero
subprincipal symbols, then the subprincipal symbols of $TS$ and $ik [T,S]$ are
$\frac{1}{2i} \{ f,g \}$ and $- \om_1 ( X,Y )$ respectively \cite{C2007}, where
$X$, $Y$ are the Hamiltonian vector fields of $f$ and $g$ and $\om_1$ is the
real two-form given by  $\om_1= i ( \Theta_{L'} - \frac{1}{2} \Theta_{K })$, with $\Theta_{L'}$, $\Theta_{K}$ the
curvatures of the Chern connections of $L'$ and the canonical bundle $K$.

\subsection{The quantum propagator}

It is a well-known result that the solution of the Schr\"odinger equation for a
pseudo-differential operator is a Fourier integral operator associated with the
Hamiltonian flow of its principal symbol. 
Our first result is the Toeplitz analogue of this fact. Consider a time-dependent Toeplitz operator
$(T_{k,t})$ with principal symbol $(H_t)$  and subprincipal symbol $(H^{\s}_{t})$.
The quantum propagator generated by $T_{k,t}$ is the smooth path ($U_{k,t}$, $t
\in \R$)  of (unitary in case $T_{k,t}$ is self-adjoint)
maps of  $\Hilb_k$ satisfying the Schr\"odinger equation
\begin{gather} \label{eq:schrod_equation}
(ik)^{-1} \frac{d}{dt} U_{k,t} + T_{k,t} U_{k,t} = 0, \qquad U_{k,0} = \op{id}
.
\end{gather}
Our goal is to describe the Schwartz kernel of $U_{k,t}$, which by
definition is 
$$ U_{k,t} (x,y) = \sum_{i =1 }^{ d_k}  (U_{k,t} \psi_i) (x) \otimes \overline{\psi}_{i}
( y) \in (L^k \otimes L')_x \otimes (\con{L}^k \otimes \con{L}')_y$$ 
where $d_k = \dim \Hilb_k$ and $(\psi_i)$ is any orthonormal basis of $\Hilb_k$. In the sequel, we will view
$U_{k,t} (x,y)$ as a map from $ (L^k \otimes L')_y$ to $(L^k \otimes L')_x$
using the scalar product of $(L^k \otimes L')_y$.

As we will see, when the principal symbol $(H_t)$ is real, this Schwartz kernel is concentrated on the graph of the
Hamiltonian flow $\phi_t$ of $H_t$. Here the symplectic form $\om$ is $i$
times the curvature of $L$, and 
the Hamiltonian vector field $X_t$ is such that
\begin{gather} 
\om ( X_t, \cdot ) + dH_t =
0.
\end{gather}
To describe the asymptotic behavior of $U_{k,t} ( \phi_t (x) , x)$, we
need to introduce two lifts of $\phi_t$, the first one  to $L$ and the second
one to the canonical
bundle $K$ of $M$. The relevant structures on $L$ will be its
metric and its connection, which is generally called the prequantum structure. 

\subsubsection*{Parallel transport and prequantum lift} 
If $A \rightarrow M$ is a Hermitian line bundle endowed with a connection $\nabla$, the
{\em parallel transport} in $A$ along a path $\ga: [0, \tau ] \rightarrow M$ is a
unitary map
$$\mathcal{T}( A , \ga) : A_{\ga (0)} \rightarrow A_{\ga ( \tau)}$$
which can be computed as follows: if $u$ is a frame of $\ga^*A$, then
$\mathcal{T} ( A , \ga) u ( 0 ) = \exp (
i \int_\ga \al ) u ( \tau)$, where $\al \in \Om^1 ( [0, \tau], \R)$ is the connection one-form defined
in terms of the covariant derivative of $u$ by  $\nabla u = -i \al \otimes u$.
In particular we can lift by parallel transport the Hamiltonian flow $\phi_t$. We set $\mathcal{T}_t^A (x) := \mathcal{T} ( A,
\phi_{[0, t]} (x) ) : A_x \rightarrow A_{\phi_t (x)}$. 

The {\em prequantum lift} of the
Hamiltonian flow $\phi_t$ to $L$ is defined by  
\begin{gather} \label{eq:prequantum_lift} 
\phi_{t}^L  (x) = e^{  \frac{1}{i}  \int_0^t H_r ( \phi_r ( x) ) \; dr}
\mathcal{T}^L_t (x). 
\end{gather}
This lift has  an interest independently of Toeplitz operators: by the
Kostant-Souriau theory, $\phi^L_t$ is the unique (up to a
phase) lift of $\phi_t$  which preserves the metric and the connection of $L$. Furthermore, if $x$
belongs to a contractible periodic trajectory with period $T$, so that we can
define the action $\mathcal{A}(x,T) \in \R$, then $\phi_T^L(x): L_x \to L_x$ is the multiplication by $\exp
( i \mathcal{A}(x,T))$. 

In our results, $\phi^L_t$ will appear to the power $k$ with some corrections
involving the subprincipal data $L'$ and $H^{\s}_{t}$,
more precisely we will see  
\begin{gather} \label{eq:preq_lift_with_subprincipal_correction}
e^{ \frac{1}{i}
 \int_0^{t} H_{r}^{\s} ( \phi_r(x)) \; dr} \Bigr[ \phi_{t}^L (x) \Bigl]^{\otimes k} \otimes \mathcal{T}_{t}^{L'}
(x)  : (L^k \otimes L')_x
\rightarrow (L^k \otimes L')_{\phi_t (x)} .
\end{gather}
So we merely replace  $L$ by $L^k \otimes L'$ and
$H_t$ by $k H_t + H^{\s}_{t}$ in \eqref{eq:prequantum_lift}.

\subsubsection*{Holomorphic determinant and lift to the canonical bundle} 

The second ingredient we need is an invariant of the complex and symplectic
structures together. If $g : S \rightarrow S'$ is a linear symplectomorphism
between two $2n$-dimensional symplectic
vector spaces both endowed with linear complex structures, we define an
isomorphism $K(g) : K(S) \rightarrow K(S')$ between the canonical lines $K(S)
=  \wedge^{n,0} S^* $ , $K(S') = \wedge ^{n,0} (S')^*$ characterized by  
\begin{gather} \label{eq:defkg}
  K(g) (\al )( g_* u) = \al (u), \qquad
\forall \alpha \in K(S), u \in \wedge^n S. 
\end{gather}
Equivalently, if $E$ and $E'$ are the $(1,0)$-spaces of $S$ and
$S'$ respectively, we have decompositions
\begin{gather*} 
S \otimes \C = E \oplus \con{E}, \qquad S' \otimes \C = E' \oplus \con{E}',
\qquad g\otimes \op{id}_{\C} = \begin{pmatrix} g^{1,0} & * \\ * &* 
\end{pmatrix}
\end{gather*}
with $g^{1,0} :E \rightarrow E'$. Then $K(g) $ is the dual map of
$\det g^{1,0} : \wedge^n E \rightarrow \wedge ^nE'$ in the sense that $ K(g)(\al) (
(\det g^{1,0}) u) = \al (u)$ for any $\al \in K(S)$ and $u \in \wedge^{n} E$.

This holomorphic determinant has a nice structure 
in terms of the polar decomposition of linear symplectic maps. When $S=
S'=\R^{2n}$ with its usual complex structure $j$, $g = g_1 g_2$ where $g_1$ and
$g_2$ are both symplectic, $g_1$ commutes with $j$, and $g_2$ is symmetric positive
definite. Then $K(g)$ is a complex number whose inverse is 
\begin{gather} \label{eq:pol_formula}
{\det}_{\C} \, g^{1,0} = \left( \prod_{i=1}^n \frac{\la_i + \la_i^{-1}}{2} \right) \,
{\det}_{\C} \, g_1
\end{gather}
with $0< \la_1 \leqslant \ldots \leqslant \la_n < 1 < \la_n^{-1} \leqslant
\ldots \leqslant \la_1^{-1}$ the eigenvalues of $g_2$, and $\op{det}_{\C} g_1$
 the determinant of $g_1$ viewed as a $\C$-linear endomorphism of $\C^n$. Indeed, ${\det}_{\C} \, g^{1,0} = {\det}_{\C} \, g_2^{1,0} \,
{\det}_{\C} \, g_1$ and one readily computes ${\det}_{\C} \, g_2^{1,0}$ using the diagonalization of $g_2$ in an orthonormal basis $(e_1, \ldots, e_n, j e_1, \ldots, j e_n)$ with $g_2 e_{\ell} = \lambda_{\ell} e_{\ell}$ and $g_2 j e_{\ell} = \lambda_{\ell}^{-1} j e_{\ell}$. 
This formula generalizes for $(S,j) \neq (S', j')$, with now linear symplectic maps $g_1 : S \rightarrow S'$ and $g_2 : S
\rightarrow S$, where $g_1 \circ j = j'\circ g_1$, and $g_2$ is positive definite for the
Euclidean structure $\om ( \cdot, j\cdot )$ of $S$. The complex determinant of
$g_1$ may be viewed as a map from $\op{det}_{\C} (S,j)$ to $\op{det}_{\C}
(S',j')$ or equivalently from $\wedge ^{\op{top}} E$ to $ \wedge
^{\op{top}} E'$. 

This definition provides us with a lift $\mathcal{D}_t$ of the Hamiltonian flow
$\phi_t$ to the canonical bundle $K = \wedge ^{(n,0)} T^*M$, defined
by
\begin{gather} \label{eq:defDt}
\mathcal{D}_t(x) = K(T_x \phi_t) : K_x \rightarrow K_{\phi_t(x)}. 
\end{gather}
We have another lift of $\phi_t$ to the
canonical bundle which is the parallel transport $\mathcal{T}_t^K$. Define
the complex number $\rho_t(x)$ such that $ \mathcal{D}_t(x) = \rho_t(x) \mathcal{T}_t^K(x)$. 

\subsubsection*{The result}

\begin{thm} \label{th:intro1}
  Let $(U_{k,t})$ be the quantum propagator of a time-dependent Toeplitz
  operator $(T_{k,t})$ with real principal
  symbol.  Then for any $t\in \R$ and $x \in M$,
  \begin{xalignat}{2} \label{eq:leading_order_prop_quant}
\begin{split}     U_{k,t} ( \phi_t (x) , x) & = \Bigl( \frac{k}{2\pi} \Bigr)^n
 \bigl[ \rho_t (x) \bigr]^{\frac{1}{2}} \,  e^{ \frac{1}{i}
 \int_0^{t} H^{\s}_{r} ( \phi_r(x)) \; dr} \Bigr[ \phi_{t}^L (x) \Bigl]^{\otimes k} \otimes \mathcal{T}_{t}^{L'}
(x) \\ & + \bigo ( k^{n-1}) 
\end{split}
\end{xalignat}
where $\phi_t$ is the Hamiltonian flow of the principal symbol $H_t$,
$\phi_t^L$ and $\mathcal{T}_t^{L'}$ are its prequantum and parallel transport lifts, $H_t^{\s}$ is the subprincipal symbol and $(\rho_t)^{1/2}$ is
the continuous square root equal to $1$ at $t=0$ of the function $\rho_t$
such that $ \mathcal{D}_t
= \rho_t \mathcal{T}_t^K $ with $\mathcal{D}_t(x) = K( T_x \phi_t)$.

If $y \in M$ is different from $\phi_t (x)$, then $U_{k,t} ( y , x) = \bigo (
k^{-N})$ for all $N$.
\end{thm}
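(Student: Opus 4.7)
The plan is to construct a parametrix $\tilde U_{k,t}$ for the Schwartz kernel of the propagator concentrated on the graph $\Gamma_t = \{(\phi_t(x),x) : x \in M\}$, whose leading asymptotic on $\Gamma_t$ matches the right-hand side of \eqref{eq:leading_order_prop_quant}, and then to run a Duhamel-style stability argument to identify it with the true $U_{k,t}$ modulo a kernel one order smaller. The rapid decay off $\Gamma_t$ comes for free from this construction.

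The ansatz I would use belongs to the class of Lagrangian-type kernels associated with $\Gamma_t$, modelled on the off-diagonal Bergman projector $\Pi_k$: a typical representative looks like $\tilde U_{k,t}(y,x) \sim a_{k,t}(y,x)\, \Pi_k(y,\phi_t(x))$, where $a_{k,t}$ is an appropriate $\op{Hom}$-valued amplitude admitting a $k^{-1}$-asymptotic expansion $a_{k,t} = a_{t,0}+k^{-1}a_{t,1}+\ldots$, and where $\Pi_k$ contributes the Gaussian concentration on $\Gamma_t$ together with the $(k/2\pi)^n$ pointwise normalisation on the diagonal. Since $\Pi_k$ is $\bigo(k^{-N})$ off the diagonal, this form automatically enforces the claimed estimate $U_{k,t}(y,x)=\bigo(k^{-N})$ once $\tilde U_{k,t}$ is shown to agree with $U_{k,t}$ up to a controlled error, so the real task is to determine $a_{t,0}$ on $\Gamma_t$.

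Substituting $\tilde U_{k,t}$ into $(ik)^{-1}\partial_t + T_{k,t}$ and using the composition calculus for Toeplitz operators against Gaussian-peaked kernels such as $\Pi_k$ produces transport equations along $\phi_t$ at each order in $k^{-1}$. The phase in $a_{t,0}$ is fixed by requiring $\tilde U_{k,t}$ to be fibrewise isometric on $\Gamma_t$, which forces precisely the subprincipal-corrected prequantum factor in \eqref{eq:preq_lift_with_subprincipal_correction}: the action exponential comes from the prequantum lift computation recalled around \eqref{eq:prequantum_lift}, while $H_t^{\s}$ together with $\mathcal{T}_t^{L'}$ enter through the $k^{-1}$-term of the Toeplitz symbolic calculus recalled in the introduction. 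The remaining scalar amplitude must then satisfy a transport equation encoding how the on-diagonal Gaussian profile of $\Pi_k$ is transported by the symplectic differential $T_x\phi_t$. This transport factor is exactly $K(T_x\phi_t)=\mathcal{D}_t(x)$ from \eqref{eq:defDt}; measuring it against the reference parallel transport $\mathcal{T}_t^K(x)$ produces the scalar $\rho_t(x)$.

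The main obstacle is this amplitude step: extracting the holomorphic determinant in a coordinate-free way from the deformation of the Bergman Gaussian profile under a symplectic diffeomorphism. At each point the computation reduces to the linear model underlying \eqref{eq:pol_formula}, but the global step is to organise it into a transport equation for a section of the canonical bundle $K$ along $\phi_t$ and to show it is solved by $[\rho_t]^{1/2}$. The square root is singled out by continuity in $t$ and the initial condition $\rho_0(x)=1$, which follows from $\mathcal{D}_0 = \mathcal{T}_0^K = \op{id}$. Once $a_{t,0}$ is in hand I would construct $a_{t,1}, a_{t,2},\ldots$ inductively so that $[(ik)^{-1}\partial_t + T_{k,t}]\tilde U_{k,t}$ is $\bigo(k^{-\infty})$ in the Toeplitz operator class, and Duhamel's formula together with the uniform boundedness $\|U_{k,t}\|=\bigo(1)$ then forces $U_{k,t} - \tilde U_{k,t}$ to have a kernel which is $\bigo(k^{n-1})$ pointwise on $\Gamma_t$, completing the proof.
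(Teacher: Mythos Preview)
Your broad architecture --- build a Lagrangian-type parametrix on $\Gamma_t$, derive transport equations order by order, close with Duhamel --- is exactly what the paper does, via its Lagrangian state family machinery on $M\times\con{M}$ applied to the Bergman kernel as initial datum (Section~\ref{sect:propagator} together with Theorem~\ref{th:Lagrangian_State_propagation}). Two points, however, need repair.

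First, the ansatz $\tilde U_{k,t}(y,x)\sim a_{k,t}(y,x)\,\Pi_k(y,\phi_t(x))$ with $a_{k,t}=a_{t,0}+k^{-1}a_{t,1}+\ldots$ cannot carry the prequantum phase. On the graph, $\Pi_k(\phi_t(x),\phi_t(x))$ is the identity of $(L^k\otimes L')_{\phi_t(x)}$; it contains no transport from $x$. The oscillatory factor $[\phi_t^L(x)]^{\otimes k}:L_x^k\to L_{\phi_t(x)}^k$ has $k$-th tensor power, so it cannot live in a $k$-independent $a_{t,0}$, and your sentence ``the phase in $a_{t,0}$ is fixed by\ldots'' cannot be made literal. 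The paper's ansatz \eqref{eq:def_lagrangian} handles this by writing $F^k\sum k^{-\ell}a_\ell$ with $F$ a section of $L\boxtimes\con{L}$ whose restriction to $\Gamma_t$ \emph{is} $\phi_t^L$; the amplitude $a_\ell$ then genuinely lives in $L'\boxtimes\con{L}'$ and is $k$-independent. You need either this or an explicit factorisation $a_{k,t}=[\phi_t^L]^{\otimes k}\otimes(\text{polynomial amplitude})$ before your transport argument can start.

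Second, your identification of the amplitude transport with $\mathcal{D}_t$ is the heart of the theorem and is only asserted, not argued. The paper obtains it in two steps: the abstract transport equation on $\Gamma\subset I\times M^2$ is solved in Proposition~\ref{prop:solution_transport_equation} by a lift $\mathcal{E}_t$ built from $\det(T^*\Gamma_t)$ via the isomorphism \eqref{eq:iso_Kt_volume}, and then Lemma~\ref{lem:toutestla} shows that, when $\Gamma_t=\op{graph}\phi_t$ and the initial datum is the diagonal, $\mathcal{E}_t(x,x)(\op{id}_{K_x})=\mathcal{D}_t(x)$. This lemma is a genuine computation (it unwinds the identification $K_{\phi_t(x)}\otimes\con{K}_x\simeq\det(T^*\op{graph}(T_x\phi_t))$ and reduces it to the definition of $K(T_x\phi_t)$); the ``linear model underlying \eqref{eq:pol_formula}'' is the input, not the argument. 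Without an analogue of this lemma your proposal does not explain why the canonical-bundle quantity that arises from transporting a Gaussian peak along $\phi_t$ is $K(T_x\phi_t)$ rather than, say, a real Jacobian or some other symplectic invariant.
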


The first part of the result has an alternative formulation when $M$ has a half-form bundle, that is a line bundle $\delta$ and an
isomorphism between $\delta^2$ and the canonical bundle $K$. Introducing the
line bundle $L_1$ such that  $L'
= L_1 \otimes \delta $ and using that $\mathcal{T}^{L'}_t =
\mathcal{T}^{L_1}_t \otimes \mathcal{T}^{\delta}_t$, we obtain 
$$ U_{k,t} ( \phi_t (x) , x) \sim \Bigl( \frac{k}{2\pi} \Bigr)^n
 e^{ \frac{1}{i}
 \int_0^{t} H^{\s}_{r} ( \phi_r(x)) \; dr} \Bigr[ \phi_{t}^L (x) \Bigl]^{\otimes k} \otimes \mathcal{T}_{t}^{L_1}
(x)  \otimes  \bigl[ \mathcal{D}_t (x) \bigr]^{\frac{1}{2}} 
$$
where $\bigl[ \mathcal{D}_t (x) \bigr]^{1/2}: \delta_x \rightarrow
\delta_{\phi_t(x)}$ is the continuous square root of $\mathcal{D}_t(x)$ equal
to $1$ at $t=0$. Observe that to write this equation, it is sufficient to
define $\delta$ on the trajectory $t \rightarrow \phi_t(x)$, which is always
possible. 

In the above statement, we focused on the geometrical description of the
leading order term, because it is the real novelty.  The complete result, Theorem \ref{th:quantum_propagator}, too long
for the introduction, is that $U_{k,t} (\phi_t (x), x)$ has a full asymptotic
expansion in integral powers of $k^{-1}$ and we also have a uniform description
with respect to $x$, $y$ and $t$ on compact regions. Such a uniform description is not
obvious because the asymptotic behavior of $U_{k,t} ( y,x)$ is completely different
whether $y= \phi_t(x) $ or not. We actually show that the Schwartz kernel
of $U_{k,t}$ is
a Lagrangian state in the sense of \cite{C2003} (see the definition in Section \ref{sect:lag_fam}), associated with the graph of
$\phi_t$. 

In the Appendix, we investigate an explicit example in which the Hamiltonian flow does not preserve the complex structure, and verify the validity of the above theorem for the kernel of the propagator on the graph of this flow.

\subsection{Smoothed spectral projector} 

Our second result is the asymptotic description of the Schwartz kernel of $f(k ( E - T_k))$
where $(T_k)$ is a self-adjoint Toeplitz operator, $E$ is a regular value
of the principal symbol $H$ of $(T_k)$ and $f \in \Ci (\R, \R)$ is a smooth function
having a compactly supported Fourier transform. For a function $g: \R
\rightarrow \R$, $g(T_k)$ is merely defined as $\sum_{\la \in \op{sp}(T_k)} g(\la)
\Pi_{\la}$ where for each eigenvalue $\la$ of $T_k$, $\Pi_\la$ is the
orthogonal projector onto the corresponding eigenspace. For $g$ smooth,
$g(T_k)$ is itself a Toeplitz operator with principal symbol $g\circ H$ and
so its Schwartz kernel is concentrated on the diagonal; more precisely
\begin{xalignat*}{2}
g(T_k) (x,x ) & = \Bigl( \frac{k}{2 \pi } \Bigr)^n g(H(x)) + \bigo ( k^{n-1}),
\\ g(T_k) (x,y) & = \bigo ( k^{-N}), \quad \forall N  \qquad \text{ when } x \neq y.
\end{xalignat*}
In the rest of the paper we will work with a function $g$ depending on $k$ in
 the very specific way $g( \tau ) = f( k (\tau - E))$, which we interpret as
 a focus at scale $k^{-1}$ around $E$. For instance, for $f$ the characteristic
 function of a subset $A$ of $\R$, $f(k(\cdot - E))$ is the characteristic function of
 $E+ k^{-1} A$. However, we will only consider very regular functions $f$,
 having a smooth compactly supported Fourier transform $\hat{f}$.

Our result is that the Schwartz kernel of $f(k(E - T_k))$
 is (up to normalization by some power of $k$, see Remark \ref{rmk:normalisation} for a discussion) a Lagrangian state
associated with the Lagrangian immersion
\begin{gather} \label{eq:lag_immersion}
  j_E: \R \times H^{-1}(E) \rightarrow
M^2, \qquad (t, x) \rightarrow ( \phi_t (x) , x).
\end{gather}
Here $\phi_t$ is the flow of the autonomous Hamiltonian $H$. It is important to note that $j_E$
is not injective and not proper in general. However only the times $t$ in the
support of the Fourier transform of $f$ matter, so we will work with the
restriction of $j_E$ to a compact subset. Still, it is possible for $j_E$ to have
 multiple points because of the periodic trajectories.

The description of the Schwartz
kernel on the image of $j_E$ will be in terms of the parallel transport  lift
of $\phi_t$ to $L$ and $L'$ as introduced above and a lift
$\mathcal{D}'_t$ to the canonical bundle of the restriction of $\phi_t$ to the
energy level set $H^{-1}(E)$, defined as follows.

First since $E$ is
regular, for any $x \in H^{-1} ( E)$, the Hamiltonian vector field $X$ of
$H$ is not zero at $x$. Second, $H$ being time-independent, $T_x \phi_t$ sends
$X_x$ into $X_{ \phi_t (x)}$, so it induces a symplectic map from $T_x
H^{-1} (E) / \R X_x $ into  $T_{\phi_t(x)}
H^{-1} (E) / \R X_{\phi_t(x)} $. In the case $x$ is periodic with period
$t$, this map is the tangent linear map to the Poincar\'e section   map.

For any $ x\in H^{-1}(E)$, write $T_xM = F_x \oplus G_x$ where $F_x$ is the subspace
spanned by $X_x$ and $j_xX_x$ and $G_x$ is its symplectic orthogonal.
Observe that $T_x H^{-1}(E) = G_x \oplus \R X_x$ so that $G_x = T_x H^{-1}(E) / \R X_x$.
Furthermore,  both
$F_x$ and $G_x$ are symplectic subspaces preserved by $j_x$, so $K_x  \simeq
K(F_x) \otimes K(G_x)$.
Then we define
\begin{gather} \label{eq:defD'}
  \mathcal{D}'_t(x):K_x \rightarrow K_{\phi_t(x)}  
\end{gather}
as the tensor product of the following
maps:
\begin{enumerate}
\item $K(F_x) \rightarrow K(F_{\phi_t(x)})$,   $\la
  \mapsto 2 \| X_x \|^{-2} \la'$ where $\la$, $\la'$ are normalised by
  $\la (X_x) = \la' (X_{ \phi_t (x)}) = 1$,
  \item$ K(\psi) :K(G_x)  \rightarrow K(G_{\phi_t(x)}) $ with $\psi$ the symplectomorphism
    $$\psi: G_x \simeq T_x H^{-1}(E) / \R X_x \xrightarrow{T_x \phi_t}
    T_{\phi_t(x)} H^{-1}(E) / \R X_{\phi_t(x)} \simeq
    G_{\phi_t(x)}. $$
\end{enumerate}
In the particular case where $T_x \phi_t $ sends $(jX)_x$ into $
(jX)_{\phi_t(x)}$, one checks that $\mathcal{D}_t' (x) = 2 \| X_x\|^{-2}
\mathcal{D}_t (x)$. Otherwise, there does not seem to be any simple relation
between $\mathcal{D}_t(x)$ and $\mathcal{D}_t'(x)$.

Exactly as we did for $\mathcal{D}_t$, we define $\rho'_t (x)$ as the complex
number such that $${\mathcal{D}_t' (x)} = \rho'_t(x)  \mathcal{T}^K_{t}
(x).$$
We denote by $\bigl[ \rho'_t(x) \bigr]^{1/2}$ the continuous square root equal
to $\sqrt{2} \| X_x \|^{-1}$ at $t=0$.

\begin{thm} \label{th:intro2}
For any self-adjoint Toeplitz operator $(T_{k})$ and regular value $E$ of its
principal symbol $H$, we have for any $x, y \in H^{-1}(E)$,
\begin{multline*}
 f( k ( E - T_k)) (y,x) = \left(\frac{k}{2\pi}\right)^{n} k^{-\frac{1}{2}} \\
  \times \sum_{\substack{ t
  \in \op{Supp} \hat{f},\\  \phi_t (x) = y} } \hat{f} (t) \, \bigl[ \rho_t' (x) \bigr]^{\frac{1}{2}}
\,   e^{ \frac{1}{i}
 \int_0^{t} H^{\s} ( \phi_r(x)) \; dr} \, \Bigr[ \mathcal{T}_{t}^L (x) \Bigl]^{\otimes k} \otimes \mathcal{T}_{t}^{L'}
(x) + \bigo ( k^{n-\frac{3}{2}}).
\end{multline*}
Furthermore, for any $(x,y) \in M^2 $ not belonging to $ j_E( \op{Supp}(\hat{ f }) \times
H^{-1}(E))$, we have $  f( k ( E - T_k)) (x,y) = \bigo ( k^{-\infty})$. 
\end{thm}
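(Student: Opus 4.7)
The natural starting point is the Fourier inversion formula
\[
  f(k(E-T_k)) \;=\; \int_{\R} \hat f(t)\, e^{iktE}\, U_{k,t}\, dt,
\]
with $U_{k,t}=e^{-ikt T_k}$ the propagator of Theorem~\ref{th:intro1} in its time-independent version ($H_t\equiv H$, $H^{\s}_t\equiv H^{\s}$). The plan is to insert the full Lagrangian-state description of $U_{k,t}(y,x)$, i.e.\ the uniform refinement of Theorem~\ref{th:intro1} alluded to in the text, and apply one-dimensional stationary phase in $t$, thereby realizing $f(k(E-T_k))$ as a Lagrangian state associated with the image of the immersion $j_E$ of~\eqref{eq:lag_immersion}.

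For $(y,x)\notin j_E(\op{Supp}\hat f\times H^{-1}(E))$ there are two cases to dispose of. If $y\neq \phi_t(x)$ for every $t\in \op{Supp}\hat f$, the uniform off-graph decay in Theorem~\ref{th:intro1} forces the integrand to be $\bigo(k^{-\infty})$ pointwise in $t\in \op{Supp}\hat f$, so the integral is too. If instead $y=\phi_{t_0}(x)$ for some $t_0\in \op{Supp}\hat f$ but $H(x)\neq E$, then the combined $t$-phase has non-vanishing derivative $k(E-H(x))\neq 0$ at $t_0$ (the action phase $-ktH(x)$ hidden inside $[\phi_t^L(x)]^{\otimes k}$ cancels $ktE$ only when $H(x)=E$), and repeated integration by parts in $t$ yields $\bigo(k^{-\infty})$.

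For $x,y\in H^{-1}(E)$ with isolated crossings $y=\phi_{t_j}(x)$, $t_j\in \op{Supp}\hat f$, I apply one-dimensional stationary phase at each $t_j$. The critical equation reduces to $H(x)=E$, which holds by hypothesis, and the main action cancels against $e^{iktE}$ as above, leaving in the leading amplitude only the subprincipal action $e^{\frac{1}{i}\int_0^{t} H^{\s}\circ\phi_r\,dr}$ and the parallel transports $[\mathcal{T}_{t}^L(x)]^{\otimes k}\otimes \mathcal{T}_{t}^{L'}(x)$. Multiplying the $(k/2\pi)^n$ prefactor of $U_{k,t}$ by the $(2\pi/k)^{1/2}$ contributed by stationary phase produces the claimed $(k/2\pi)^{n-1/2}$.

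The main obstacle is the identification of the geometric square root. The Hessian in $t$ of the Lagrangian-state phase at a crossing measures how rapidly the trajectory $\phi_t(x)$ leaves $y$, and on the symplectic plane $F_x=\R X_x\oplus \R\, j_x X_x$ this rate is controlled by $\|X_x\|^2$ in the K\"ahler metric; the Gaussian integral then contributes a factor proportional to $\sqrt 2/\|X_x\|$. Geometrically this is exactly what converts the contribution of $K(T_x\phi_t)$ restricted to $F_x$ into the normalized isomorphism $\la\mapsto 2\|X_x\|^{-2}\la'$ of~\eqref{eq:defD'} on $K(F_x)$, while the restriction to $G_x$, being symplectically orthogonal to $F_x$, is carried unchanged to the Poincar\'e-type map $K(\psi):K(G_x)\to K(G_{\phi_t(x)})$. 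Hence $[\rho_t(x)]^{1/2}$ of Theorem~\ref{th:intro1} is replaced by $[\rho'_t(x)]^{1/2}$ with the correct normalization $\sqrt 2/\|X_x\|$ at $t=0$. The delicate part is carrying out this Gaussian bookkeeping on the splitting $T_xM=F_x\oplus G_x$ carefully enough to track all constants and phases.
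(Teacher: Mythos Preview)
Your overall strategy coincides with the paper's: write $f(k(E-T_k))$ as a $t$-integral of $\hat f(t)e^{iktE}U_{k,t}$ and apply one-variable stationary phase. The paper packages this step as a general result (Theorem~\ref{th:Lagrangian_Fourier}) showing that the $\hbar$-Fourier transform of any Lagrangian state family is an immersed Lagrangian state, and then specializes to the diagonal in $M\times\overline M$ via Proposition~\ref{prop:lagr-state-spectr}. Your off-graph and non-resonant arguments are the same as theirs.

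The gap is in your symbol computation. You assert that the Gaussian Hessian ``is controlled by $\|X_x\|^2$'' and that the $G_x$-contribution of $\mathcal{D}_t$ ``is carried unchanged to the Poincar\'e-type map $K(\psi)$''. Neither holds for $t\neq 0$: the tangent map $T_x\phi_t$ preserves $X$ but not $jX$, so it does not respect the splitting $F\oplus G$, and the paper explicitly warns (just after \eqref{eq:defD'}) that there is no simple relation between $\mathcal{D}_t(x)$ and $\mathcal{D}'_t(x)$ in general. What is true is the identity $\rho'_t(x)=\rho_t(x)/B(t,(\phi_t(x),x))$, where $B$ is the full stationary-phase Hessian on $M\times\overline M$; but $B$ is not simply $\|X\|^2/2$ away from $t=0$, and proving this identity requires the linear-algebra computation that the paper carries out on the graph of $T_x\phi_t$ inside $T_{\phi_t(x)}M\oplus T_xM$ (the proof of Proposition~\ref{prop:lagr-state-spectr} together with the specialization in Section~\ref{sec:smooth-spectr-proj}). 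So your route is viable, but the passage from $\rho_t$ to $\rho'_t$ cannot be done by the naive $F\oplus G$ factorization you sketch; you must either compute the Hessian $B$ honestly for all $t$ and match it to $\mathcal{D}_t/\mathcal{D}'_t$, or bypass $\mathcal{D}_t$ altogether and compute $\mathcal{D}'_t$ directly from the immersed-Lagrangian picture, as the paper does.
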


As in Theorem \ref{th:intro1}, in the case $M$ has a half-form bundle $\delta$, we can
write  $L' = L_1 \otimes \delta$ and replace the sum above by
$$ \sum_{\substack{ t
  \in \op{Supp} \hat{f},\\  \phi_t (x) = y} } \hat{f}(t) \, e^{ \frac{1}{i}
 \int_0^{t} H^{\s} ( \phi_r(x)) \; dr} \, \Bigr[ \mathcal{T}_{t}^L (x) \Bigl]^{\otimes
 k} \otimes \mathcal{T}_{t}^{L_1} (x) \otimes \bigl[\mathcal{D}_t'(x)  \bigr]^{\frac{1}{2}}$$
where $ \bigl[ \mathcal{D}'_t (x)
\bigr]^{1/2} : \delta_x \rightarrow \delta_{\phi_t (x)} $ is continuous and
equal to $\sqrt{2} \ \| X_x \|^{-1} \op{id}_{\delta_x}$ at $t=0$.

Furthermore, we will give a uniform description with respect to $(x,y)$ of the Schwartz kernel of
$f( k (E - T_{k}))$ by showing it is a Lagrangian state associated with the Lagrangian immersion $j_E$.   

\subsection{Discussion}

Let us explain more the structure of the Lagrangian states appearing in the previous
results (see also Section \ref{sect:lag_fam} for precise definitions). Roughly, a Lagrangian state of $M$ is a family $(\Psi_k \in \Hilb_k,
\; k \in \N )$ which is $\bigo ( k^{-\infty})$ outside a Lagrangian submanifold $\Ga$
  of $M$ and  which has an asymptotic expansion at any point $x \in \Ga$ of the
  form
  \begin{equation} \Psi_k (x) = k^m [t (x)]^{\otimes k} \Bigl ( a_0 (x) + k^{-1} a_1 (x) +
  \ldots ) \label{eq:WKB} \end{equation}
  where $m$ is some nonnegative integer, $t(x) \in L_x $ has norm one, and the coefficients $a_0(x)$, $ a_1 (x)$, $\ldots$ 
  belong to $L_x'$.  We can think of $[t(x)]^k$ as an oscillatory factor and
  $k^m \sum  k^{-\ell} a_{\ell} (x)$ as an amplitude, so the right-hand side of \eqref{eq:WKB} is completely analogous to the well-known WKB ansatz. Indeed, locally in a trivialization open set for $M$, $L$ and $L'$, sections of $L$ and $L'$ can be identified with functions, which yields $t(x)^{\otimes k} = e^{i k \phi(x)}$ for some phase $\phi$ which is real on $\Gamma$. 
 Furthermore, $t(x)$ and the $a_{\ell}(x)$ all depend smoothly
  on $x$ so that they define sections of $L$ and $L'$ respectively over $\Ga$. The
  section $t$ has the important property to be flat. Regarding the leading order term
  $a_0(x)$ of the amplitude, it is often useful to think about it as a
  product $t_1(x) \otimes \nu (x) $ where $t_1(x) \in (L_1)_x$ and $\nu (x)
  \in \delta_x$. Here $\delta$ is a half-form bundle, which can be introduced
  at least locally, and $L ' = L_1 \otimes \delta$. Then $[t(x)]^{\otimes k}
  \otimes t_1 (x)$
  may be viewed as a deformation of $[t(x)]^{\otimes k}$, whereas $\nu (x)$ is a square
  root of a volume element of $\Ga$. Indeed,  $\Ga$ being
  Lagrangian, there is a natural pairing between the restriction of the
  canonical bundle to $\Ga$ and the determinant bundle $\det T\Ga \otimes \C$.

  In our results, the Lagrangian states, which are Schwartz kernels of operators, are defined on $M^2$, with the prequantum bundle $L \boxtimes \con{L}$. Here, the
  symplectic and prequantum structures are such that the graphs of
  symplectomorphisms are Lagrangian submanifolds and their prequantum lifts
  define flat sections. 
In Theorem \ref{th:intro1}, the Schwartz kernel of the quantum propagator is
defined as a Lagrangian state associated with the graph of the Hamiltonian flow
and its prequantum lift. As was already noticed, the prequantum lift appears with
correction terms $\exp \bigl( i \int_0^t H_{r}^{\s} (\phi_r (x)) \, dr
 \bigr) $ and $\mathcal{T}_t^{L_1} (x)$, which are the contributions of the
 corrections $H^{\s}$ to $H$ and $L_1$ to $L$. Then the last term
 $\mathcal{D}_t^{\frac{1}{2}}$ is merely the
 square root of the image of the Liouville volume
 form by the map $M \rightarrow \op{graph} \phi_t$ sending $x$ into $(\phi_t
 (x),x )$. 

The relation between Theorems \ref{th:intro1} and \ref{th:intro2} relies on the time/energy
duality. Roughly, for a time-independent operator $\hat{H}$,
we pass from the quantum propagator $(
\exp ( -i\hbar^{-1} t \hat{H} ),\; t\in \R)$ to the smoothed spectral projector $( f(\hbar^{-1} ( \hat{H} - E)), \; E \in \R)$, by multiplying by $\hat f (t)$ and
then doing a partial $\hbar$-Fourier transform with respect to the variables
$t, E$ (here $k$ plays the part of $\hbar^{-1}$). In the microlocal point of view, the variables $t$ and $E$ are
equivalent and we can view the quantum propagator and the spectral projector
as two facets of the same object.

In our results, this duality is expressed by the fact that the graph of $\phi_t$ and the Lagrangian immersion $j_E$ are obtained in a
symmetric way from the Lagrangian submanifold
\begin{gather} \label{eq:grande_lagrangienne} 
\wt{\Ga} = \{ ( t, H(x), \phi_t (x),
x) / x\in M, t\in \R \}
\end{gather}
of $T^*\R \times M \times M^{-}$. Indeed, the graph of
$\phi_t$ and the image of $j_E$ are the projections onto $M^2$ of the
slices
$$ \wt{\Ga}_t = \wt{\Ga} \cap (\{ t \} \times \R \times M^2), \qquad \wt{\Ga}^E = \wt{\Ga} \cap (\R \times \{ E \} \times M^2).$$ 
The prequantum lifts and the volume elements can also be incorporated in this
picture. In particular, we pass from $\mathcal{D}_t$ to  $\mathcal{D}'_t$ by canonical isomorphisms between volume elements of $\wt{\Ga}$,
$\wt{\Ga}_t$ and $\wt{\Ga} ^E$. 

As we will see in a next paper, the quantum propagator viewed as a function of time is
actually a Lagrangian state associated with $\wt{\Ga}$ (and we will particularly focus on the computation of the precised geometric quantities involved in its principal symbol). This statement is
delicate because here we mix real and complex variables, cotangent bundles and
K\"ahler manifolds, and the description
of Lagrangian states is rather different in these two settings. To give a
sense to this, we will perform a Bargmann transform so that the quantum
propagator will become a holomorphic function of the complex variable $t +iE$.
This point of view will be interesting, even for the proof of
Theorem \ref{th:intro1}, to understand the transport equation satisfied by the
leading order term of the amplitude. 

\subsection{Comparison with earlier results}
\label{subsect:comparison}

The introduction of Fourier integral operators with application to the Schrö\-dinger
equation and spectral properties of pseudodifferential operators has its
origin in the seminal H\"ormander \cite{Hor1} and Duistermaat-Guillemin 
\cite{DuGu} papers, cf. the
survey \cite{Gu_FIO}. In these first developments, the operator under study is the
Laplace-Beltrami operator and the corresponding classical flow is the geodesic
flow.

The transcription of these results to Berezin-Toeplitz operators has been done
in the paper \cite{BPU} by Bothwick-Paul-Uribe, by applying the Boutet de
Monvel-Guillemin approach of \cite{BoGu}. Similar results have been proved in
a recent paper \cite{ZeZh} by Zelditch-Zhou 
where the application to spectral densities has been pushed further.
These papers both rely on the Boutet de Monvel-Guillemin book \cite{BoGu}. In particular
the properties of Berezin-Toeplitz operators are deduced from the
pseudodifferential calculus, and the quantum propagator is viewed as a
Fourier integral operator. From this is deduced the asymptotics of the smoothed spectral
projector on the diagonal, \cite[Theorem 1.1]{BPU} and \cite[Theorem
2.2]{ZeZh}. The leading order term is computed in terms of the symbolic calculus
of Fourier integral operator of Hermite type in \cite{BPU}, or with a
non-linear problem in Bargmann space in \cite{ZeZh}. Another description of the kernel of the quantum propagator associated with an autonomous Hamiltonian was obtained by Ioos \cite{Ioos}; this description involves quantities related with parallel transport in the canonical bundle with respect to a connection induced by the tranport of the initial complex structure by the Hamiltonian flow, and computing these coefficients appears to be quite complicated in general, relatively to our formulas. In fact, in all these works the analysis is well-understood but our main addition, apart from obtaining a direct derivation in our context, is the precise computation of the geometric quantities contained in the principal symbol of the kernel of the quantum propagator seen as a Lagrangian state. In particular these quantities have a very natural interpretation in terms of half-forms, and can be easily computed for concrete examples.

The techniques that we use come from the work of the first author  where a direct definition
of Lagrangian states on a K\"ahler manifold is introduced \cite{C2003}.
As explained in the discussion following Equation \eqref{eq:WKB}, these
  Lagrangian states locally look like WKB functions with complex phase. The
  microlocal toolbox for complex phase WKB states was developed in the seminal
  paper \cite{MeSj} in the homogeneous case. However our Lagrangian states are
  specific to the K\"ahler setting, for instance, the states being defined directly on phase
  space, there are no caustics. Moreover the relevant symplectic geometry is
  not the geometry of the cotangent space of the base but the K\"ahler geometry
  of the base itself.

In the first author's PhD thesis \cite[Section 3.5.2]{ChaPhD}, it is shown that the quantum propagator is a Lagrangian state, but without the precise computation of the principal symbol that we obtain here. The use of half-form bundles for Berezin-Toeplitz operators started in \cite{C2006, C2007} and here we apply them to the description of the quantum propagator. The
isomorphisms \eqref{eq:defkg} have been introduced in \cite{C2007},
\cite{C2010} where their square roots
are called half-form bundle morphisms. A similar invariant appears in
\cite{ZeZh} under the form \eqref{eq:pol_formula}.
Again, we insist that the main novelty in our results
is the precise description and computation of the coefficients $\rho_t(x)$ and $\rho'_t(x)$
appearing in Theorems \ref{th:intro1}, \ref{th:intro2}.

Whereas the relation between the
quantum propagator and the Hamiltonian flow is a classical result, the similar statement
for the
smoothed spectral projector and the Lagrangian immersion
\eqref{eq:lag_immersion} seems to be new.  In \cite{BPU} and
\cite{ZeZh}, only the diagonal behavior of the Schwartz kernel is described.
To state our result, we will introduce a general class of Lagrangian states associated with Lagrangian
immersions.

\subsection{Outline of the paper} 

Section \ref{sect:lag_fam} is devoted to time-dependent Lagrangian states, that we call
Lagrangian state families. The main result is that these states provide solutions to the
Schr\"odinger equation with quantum Hamiltonian a Toeplitz operator and initial data a Lagrangian state, cf. Theorem
\ref{th:Lagrangian_State_propagation}. The principal symbol of these solutions
satisfies a transport equation, that is solved in Section
\ref{sec:transport-equation} (up to a rather technical part which is postponed to Section \ref{sec:proof-theo} for the sake of clarity),  while in Section \ref{sect:meta}, we give an
elegant expression in the context of metaplectic
quantization.
These results will be applied in Section \ref{sect:propagator} to the quantum
propagator, where Theorem \ref{th:intro1} is proved.

In Section \ref{sect:imm_lag}, we prove that the Fourier transform of a time-dependent Lagrangian
state is a Lagrangian state as well, with an underlying Lagrangian manifold
which is in general only immersed and not embedded, cf. Theorem \ref{th:Lagrangian_Fourier}. The needed adaptations in
the Lagrangian state definition for immersed manifolds are given in Section
\ref{sec:immers-lagr-stat}. 
In Section \ref{sect:proj}, we deduce Theorem \ref{th:intro2} on the smoothed
spectral projector. 

\paragraph{Acknowledgments.} We thank an anonymous referee for useful comments.

\section{Propagation of Lagrangian states} 
\label{sect:lag_fam}

In this section, we introduce some one-parameter families of Lagrangian
  states which are relevant to our setting and study how they evolve under the
  Schr\"odinger equation. The definition of these states is new and builds on
  the standard definition of Lagrangian states introduced in \cite{C2003},
  which we briefly recall now. It will also be useful to have the standard definition in mind when introducing Lagrangian states associated with immersed Lagrangians, see Section \ref{sec:immers-lagr-stat}.

Let $\Gamma$ be a Lagrangian submanifold of $M$ equipped with a flat unitary section $s \in \mathcal{C}^{\infty}(\Gamma,L)$. A \emph{Lagrangian state} associated with $(\Gamma,s)$ is a sequence $(\Psi_k \in \Hil_k)_{k \geq 1}$ of the form
\[ \Psi_k(x) = \left( \frac{k}{2\pi}\right)^{\frac{n}{4}} F^k(x) a(x,k) + \bigo(k^{-\infty}) \]
where 
\begin{itemize}
\item[-] $F \in \mathcal{C}^{\infty}(M,L)$ is such that $\bar{\partial} F$ vanishes to infinite order along $\Gamma$,
\item[-]  $F_{| \Gamma} = s$ and $|F(x)| < 1$ for $x \notin \Gamma$,
\item[-]  $a(\cdot,k)$ is a sequence of smooth sections of $L' \to M$ with an asymptotic expansion $a(\cdot,k) = \sum_{\ell \geq 0} k^{-\ell} a_{\ell}$ for the $\mathcal{C}^{\infty}$ topology, where each section $a_{\ell}$, for $\ell \geq 0$, is such that $\bar{\partial} a_{\ell}$ vanishes to infinite order along $\Gamma$,
\item[-]  the $\bigo$ is for the pointwise norm and uniform on $M$.
\end{itemize}

For any sequence $(b_{\ell})_{\ell \geq 0}$ of elements of
$\mathcal{C}^{\infty}(\Gamma,L')$, there exists a Lagrangian state $\Psi_k$
such that for every $\ell \geq 0$, $b_{\ell} = {a_{\ell}}_{|\Gamma}$.
 The \emph{full symbol} of $\Psi_k$ is the formal series $\sum_{\ell \geq 0} \hbar^{\ell} b_{\ell}$, and uniquely determines $\Psi_k$ up to $\bigo(k^{-\infty})$. The first term $b_0$ in this full symbol is called the \emph{principal symbol} of $\Psi_k$.

Since we will use later some generalisations of this construction, let us
briefly recall the proof, the details being in \cite[Section 2]{C2003}. First,
since $\Gamma$ is a totally real submanifold, any smooth function of
$\Gamma$ has an extension $f$ to $M$ such that $\con{\partial} f$ vanishes to
infinite order along $\Ga$. The same holds for the sections of a holomorphic line
bundle. In this way we construct $F$ and the $a_{\ell}$'s from  $s$ and the
$b_{\ell}$'s respectively. These sections are not uniquely determined, but their Taylor
expansion along $\Ga$ is. In particular, a computation shows that $\ln |F|$
has a non degenerate minimum along $\Ga$, so modifying $F$ away from $\Ga$ if
necessary, the condition $|F|<1 $ on $M \setminus \Ga$ is satisfied. The
Lagrangian state $\Psi_k$ is then obtained by projecting the smooth section $\widetilde{\Psi}_k = ( k/2 \pi)^{n/4}F^k
a( \cdot, k)$ onto $\Hilb_k$. We claim that $\Psi_k = \widetilde{\Psi}_k + \bigo (
k^{-\infty})$. The proof of this fact was obtained by stationary phase computations in \cite{C2003}.
Alternatively, this follows from the fact that $\con{\partial} \Psi_k = \bigo
( k^{-\infty})$ and the Kodaira-Hörmander estimates \cite{Kod,Hor_dbar}.

\begin{rmk} 
\label{rmk:normalisation}
The normalization factor $\bigl( \frac{k}{2\pi}\bigr)^{\frac{n}{4}}$ is
somewhat arbitrary. First, the power of $2\pi$ could be included in the symbol
of the Lagrangian state. Second, the choice of the power of $k$ is more or
less convenient depending on the context, since Lagrangian states appear in
different situations (for instance as approximate eigenvectors for
Berezin-Toeplitz operators, or as an ansatz for the Schwartz kernel of such an
operator).  Here the choice of normalization yields a $L^2$-norm of order
$\bigo(1)$ for the Lagrangian states. \qed
\end{rmk}

\subsection{Families of Lagrangian states} 
\label{sec:famil-lagr-stat}

As explained above, to define a Lagrangian state, we need a Lagrangian submanifold  of $M$
equipped with a flat unitary section of the prequantum bundle $L$. Let us
consider a one-parameter family of such pairs. More precisely, let $I \subset \R$ be an
open interval, $\C_I$ be the trivial complex line bundle over $I$, $\Ga$ be a closed submanifold of $I \times M$ and $s \in \Ci ( \Ga, \C_I \boxtimes L )$ be such that
\begin{enumerate}
\item  the map $q: \Ga \rightarrow I$, $q(t,x) = t$ is a proper submersion. So for any $t \in I$, the fiber $\Ga_t := \Ga \cap ( \{ t \} \times M)$ is a submanifold of $M$,
\item for any $t \in I$, $\Ga_t$ is a Lagrangian submanifold of $M$ and the restriction of $s$ to $\Ga_t$ is flat and unitary.
\end{enumerate}

\begin{rmk} \begin{enumerate} 
\item[a.]  Since $q$ is a proper submersion, by Ehresmann's lemma, $\Ga$ is
  diffeomorphic to $I \times N$ for some manifold $N$ in such a way that  $q$
  becomes the projection onto $I$.
  
\item[b.] Given a proper submersion $q: \Ga \rightarrow I$ and a map $f : \Ga \rightarrow M$, it is equivalent that the map $\Ga \rightarrow I\times M$, $ x \rightarrow (q(x), f(x))$ is a proper embedding  and that for any $t$, $f(t,\cdot): \Ga_t \rightarrow M$ is an embedding. We decided to start from a closed submanifold of $I \times M$ to be more efficient in the definition of Lagrangian states below. \qed
  \end{enumerate} 
\end{rmk}

We will consider states $\Psi_k$ in $\mathcal{H}_k$ depending smoothly on $t \in I$, so $\Psi_k$ belongs to $\Ci ( I , \mathcal{H}_k)$. Equivalently $\Psi_k$ is a smooth section of $\C_I \boxtimes( L^k \otimes L')$ such that $\con{\partial}\Psi_k = 0$. Here the $\con{\partial}$ operator only acts on the $M$ factor. Similarly, it makes sense to differentiate with respect to $t \in I$ a section of $\C_I \boxtimes A$, where $A$ is any vector bundle over $M$. 

\begin{dfn} A {\em Lagrangian state family} associated with $( \Ga, s)$ is a family $( \Psi_k \in \Ci (I, \mathcal{H}_k)$, $k \in \N)$ such that for any $N$, 
\begin{gather} \label{eq:def_lagrangian} 
\Psi_k ( t,x) = \Bigl( \frac{k}{2\pi} \Bigr)^{\frac{n}{4}} F^k(t,x) \sum_{\ell = 0 }
^{N} k^{-\ell} a_{\ell} (t,x) + R_N(t,x,k) 
\end{gather}
where
\begin{enumerate}
\item[-] $F$ is a section of $\C_I \boxtimes L$ such that $F|_\Ga = s$, $\con{\partial} F$ vanishes to infinite order along $\Ga$ and $|F| <1$ outside of $\Ga$,
\item[-] $(a_\ell)$ is a sequence of sections of $\C_I \boxtimes L'$ such that $\con{\partial} a_\ell$ vanishes to infinite order along $\Ga$,
\item[-] for any $p$ and $N$, $\partial^p_t R_N = \bigo ( k^{ p - N -1})$ in pointwise norm uniformly on any compact subset of $I \times M$.   
\end{enumerate}
\end{dfn}
It is not difficult to adapt the arguing of \cite[Section 2]{C2003} and to prove the following facts. First, the section $F$ exists. Second, we can specify arbitrarily the coefficients $a_\ell$ of the asymptotic expansion along $\Ga$ and this determines $(\Psi_k)$ up to $\bigo ( k^{-\infty})$. More precisely, for any sequence $(b_\ell) \in \Ci ( \Ga, \C_I \boxtimes L')$, there exists a Lagrangian state $(\Psi_k)$ satisfying for any $y  \in \Ga$
$$ \Psi_k (y)  = \Bigl( \frac{k}{2\pi} \Bigr)^{\frac{n}{4}} s^k(y)  \sum_{\ell = 0 }^N k^{- \ell} b_\ell(y)  + \bigo ( k^{-N - 1}) \qquad \forall N .$$
Furthermore, $(\Psi_k)$ is unique up to a family $(\Phi_k \in \Ci (I,
\mathcal{H}_k), \; k \in \N)$ satisfying $\| (\frac{d}{dt})^p \Phi_k (t)\| =
\bigo ( k^{ - N})$ for any $p$ and $N$ uniformly on any compact subset of
$I$.

We will call the formal series $\sum \hbar^{\ell} b_{\ell} $ the {\em full
  symbol} of $(\Psi_k)$. The first coefficient $b_0$ will be called the {\em
  principal symbol}.

It could be interesting to define Lagrangian state families with a different
regularity with respect to $t$. Here, our ultimate goal is to solve a Cauchy
problem, so we will differentiate with respect to $t$ and in a later proof, we will
use that $(k^{-1} \partial_t \Psi_k )$ is still a Lagrangian state family. So we need to consider states which are smooth in $t$.
Observe that in the estimate satisfied by $R_N$ we lose one power of $k$ for
each derivative; this is consistent with the fact that $
\partial_t (F^k) = k F^k f$ where $f$ is the logarithmic derivative of $F$, that is $\partial_t F = f F$. 

A last result which is an easy adaptation of \cite[section 2.4]{C2003} is the action of
Toeplitz operators on Lagrangian state families. Let $(T_{k,t})$ be a time-dependent Toeplitz operator and
$(\Psi_{k,t})$ be a Lagrangian
state as above. Then $(T_{k,t}\Psi_{k,t})$  is a Lagrangian
state family associated with $( \Ga, s)$ as well. Furthermore, its full symbol is equal to $\sum
\hbar^{\ell+m} Q_m (b_{\ell})$ where $\sum  \hbar^{\ell} b_{\ell}$ is the full
symbol of $(\Psi_k)$ and the $Q_m$ are differential operators acting on
$\Ci ( \Ga, \C_I \otimes L')$ and depending only on $(T_{k,t})$. In particular, $Q_0$ is 
the multiplication by the restriction to $\Ga$ of the principal symbol of
$(T_k)$.

\subsection{Propagation} 

Consider the same data $\Ga \subset I \times M$ and $s\in \Ci ( \Ga, \C_I
\boxtimes L)$ as in the previous section.
We claim that the covariant derivative of $s$ has the form
\begin{gather} \label{eq:def_tau}
\nabla s = i \tau dt  \otimes s \qquad \text{with} \qquad \tau \in \Ci (
\Ga, \R).
\end{gather}
Here the covariant derivative is induced by the trivial derivative of
$\C_{I}$ and the connection of $L$. To prove \eqref{eq:def_tau}, use
that the restriction of $s$ to each $\Ga_t$ is
flat. So $\nabla s = i \al \otimes s$, with $\al \in \Om ^ 1 ( \Ga , \R )$
vanishing in the vertical directions of $q: \Ga \rightarrow I$. So $\al =
\tau dt$ for some function $\tau \in \Ci ( \Ga, \R)$.

In the following two propositions,  $(\Psi_k)$ is a Lagrangian state family associated with $(\Ga, s)$ with full symbol
$b(\hbar) = \sum \hbar^\ell b_{\ell}$. 

\begin{prop} \label{prop:derive_etat_lag}
 $( (ik)^{-1} \partial_t \Psi_k)$ is a
Lagrangian state family associated with $(\Ga, s)$ with full symbol $(\tau +
\hbar P) b( \hbar)$, where $P$ is a differential operator of $\Ci ( \Ga,
\C_{I} \boxtimes L')$. 
\end{prop}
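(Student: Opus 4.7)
The plan is to differentiate the Lagrangian state expansion of $\Psi_k$ in $t$, regroup the result into Lagrangian state form, and identify the leading symbol along $\Gamma$ via geometric properties of the admissible extension $F$.

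Starting from $\Psi_k = (k/2\pi)^{n/4} F^k \sum_\ell k^{-\ell} a_\ell + R_N$, I apply $\partial_t$ term by term. Since the pullback connection on $\pi_M^\ast L$ has trivial $t$-component, $\partial_t(F^k) = k F^{k-1}\nabla_{\partial_t} F$. The condition $|F|=1$ on $\Gamma$ ensures $F$ is non-vanishing in a neighborhood of $\Gamma$, so I define a smooth complex $1$-form $\alpha$ there by $\nabla F = \alpha\otimes F$ and set $g := \alpha(\partial_t)$, so that $\nabla_{\partial_t} F = gF$. Collecting terms,
\[
(ik)^{-1}\partial_t \Psi_k = \Bigl(\tfrac{k}{2\pi}\Bigr)^{\frac{n}{4}} F^k \sum_\ell k^{-\ell} c_\ell + \bigo(k^{-N-1}), \qquad c_\ell := -ig\, a_\ell - i\,\partial_t a_{\ell-1}
\]
(with the convention $a_{-1}:=0$); this is already a Lagrangian state family associated with $(\Gamma, s)$, the remainder bounds transferring from those on $\Psi_k$.

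The crucial step is the identity $g|_\Gamma = i\tau$, which I prove from three properties of $\alpha$ on $\Gamma$. First, because $|F|^2 \le 1$ with equality exactly on $\Gamma$, the maximum principle gives $d|F|^2 = 0$ on $\Gamma$, and $d|F|^2 = 2\operatorname{Re}\langle \nabla F, F\rangle = 2|F|^2\operatorname{Re}(\alpha)$ yields $\operatorname{Re}(\alpha)|_\Gamma = 0$. Second, $\bar\partial F$ vanishes to infinite order on $\Gamma$, and since $\bar\partial F = \alpha_M^{(0,1)}\otimes F$, the $(0,1)$-component of $\alpha$ in the $M$-directions vanishes on $\Gamma$. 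Third, a $(1,0)$-form on $M$ whose real part vanishes must itself vanish; combined with the previous observation, the full $M$-part of $\alpha|_\Gamma$ vanishes. Hence $\alpha|_\Gamma = \alpha_t\,dt$ with $\alpha_t$ purely imaginary. Picking any $V\in T\Gamma$ with $dt(V) = 1$ (possible since $q : \Gamma \to I$ is a submersion) and invoking \eqref{eq:def_tau},
\[
\alpha_t = \alpha(V)|_\Gamma = \left.\frac{\nabla_V F}{F}\right|_\Gamma = \frac{\nabla_V s}{s} = i\tau,
\]
so $g|_\Gamma = \alpha(\partial_t)|_\Gamma = \alpha_t|_\Gamma = i\tau$.

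Restricting $c_\ell$ to $\Gamma$ gives $c_\ell|_\Gamma = \tau b_\ell - i(\partial_t a_{\ell-1})|_\Gamma = \tau b_\ell + (Pb_{\ell-1})$, where $P$ is the first-order differential operator on $\Ci(\Gamma, \mathbb{C}_I \boxtimes L')$ induced by $-i\partial_t$ (the freedom in extending $b_{\ell-1}$ off $\Gamma$ being absorbed in the definition of $P$). This is precisely the coefficient of $\hbar^\ell$ in $(\tau + \hbar P)b(\hbar)$, concluding the proof. The main obstacle is the identification $g|_\Gamma = i\tau$; once the three structural observations about $\alpha|_\Gamma$ are combined as above, the rest is formal.
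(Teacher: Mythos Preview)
Your approach mirrors the paper's: differentiate the expansion, identify the logarithmic $t$-derivative of $F$ on $\Gamma$ as $i\tau$, and read off the symbol. Your argument for $g|_{\Gamma}=i\tau$ is a legitimate variant: the paper uses that $\nabla F$ vanishes on $T_x\Gamma_t$ (flatness of $s|_{\Gamma_t}$) together with the vanishing on $T^{0,1}_xM$, whereas you replace the first ingredient by $\operatorname{Re}\alpha|_\Gamma=0$ coming from the maximum of $|F|^2$. Both routes kill the $M$-component of $\alpha$ on $\Gamma$.

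There are, however, two points you skate over. First, to conclude that $(ik)^{-1}\partial_t\Psi_k$ is a Lagrangian state family you must check that $\bar\partial c_\ell$ vanishes to infinite order along $\Gamma$; this follows because $\bar\partial$ commutes with $\partial_t$, but you should say so. Second, and more substantively, your definition of $P$ is incomplete. Writing ``the freedom in extending $b_{\ell-1}$ off $\Gamma$ being absorbed in the definition of $P$'' does not establish that $(\partial_t a_{\ell-1})|_\Gamma$ depends only on $b_{\ell-1}=a_{\ell-1}|_\Gamma$; a priori different admissible extensions could give different values. The paper closes this gap by observing that, since $(T_x\Gamma_t\otimes\C)\oplus T^{0,1}_xM=T_xM\otimes\C$, there is a unique $Z(t,x)\in T_{(t,x)}\Gamma$ with $Z\equiv\partial_t\bmod T^{0,1}_xM$, and then $\partial_t a_\ell=\nabla_Z a_\ell$ on $\Gamma$ because $\bar\partial a_\ell$ vanishes there. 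This gives the explicit formula $P=\tfrac{1}{i}\nabla_Z$, which both shows $P$ is intrinsically defined on $\Gamma$ and is needed later (e.g.\ in the proof of Theorem~\ref{th:gamma}). You should supply this argument or an equivalent one.
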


\begin{proof}
  Differentiating the formula \eqref{eq:def_lagrangian} with respect to $t$, we obtain on a
  neighborhood of $\Ga$ that
  $$  \partial_t \Psi_k  = \Bigl( \frac{k}{2\pi} \Bigr)^{\frac{n}{4}} F^k
   \sum_{\ell } k^{-\ell} ( k f a_\ell  + \partial_t a_\ell )  $$
  where $f \in \Ci ( I \times M)$ is the logarithmic derivative of $F$ with
  respect to time, so 
  $\partial_t F = f F$. Using that $\con{\partial}$ and the derivative with respect to
  $t$ commute, we easily prove that  $\con{\partial}f $ and
  $\con \partial (\partial_t a_\ell) $ both vanish to infinite order
  along $\Ga$. This shows that $(k^{-1} \partial_t \Psi_k)$ is a Lagrangian state
  associated with $(\Ga, s)$.

  Its full symbol is the restriction to $\Ga$ of the
  series $ \sum \hbar^{\ell} (f a_{\ell } + \hbar \partial_t a_{\ell})$. We claim that
  $f |_{\Ga} = i \tau$. Indeed, at any point of $\Ga$, $\nabla F$ vanishes in
  the directions tangent to $M$, because it vanishes in the directions of type
  $(0,1)$ and in the directions tangent to $\Ga_t$ as well. So $\nabla F = f dt \otimes
  F$ along $\Ga$. The restriction of $F$ to $\Ga$ being $s$,
  \eqref{eq:def_tau}
   implies that $f |_{\Ga} = i \tau$.

  Using similarly that at any $x \in \Ga_t$, $(T_x \Ga_t\otimes
  \C ) \oplus T^{0,1} _x M = T_x M \otimes \C$, and $\con \partial a_{\ell} =0$ along
  $\Ga$, it comes that $\partial_t a_{\ell} = \nabla_Z
  a_{\ell}$ along $\Ga$, where  $Z(t,x) \in T_{(t,x)} \Ga$ is the projection of
  $\partial/\partial t$ onto $T_{(t,x)} \Ga$ parallel to $ T^{0,1}_xM$. This
  concludes the proof with $P$ the operator $\frac{1}{i} \nabla_Z$.  
\end{proof}
  
We now assume that $(\Ga, s)$ is obtained by propagating a Lagrangian
submanifold $\Ga_0$ of $M$ and a flat section $s_0$ of $L \rightarrow \Ga_0$
by a Hamiltonian flow and its prequantum lift. Let $(H_{t})$ be the
time-dependent Hamiltonian generating our flow $( \phi_t)$ and denote by
$\phi_t^L$ its prequantum lift defined as in the introduction by
\eqref{eq:prequantum_lift}. So we set
$$ \Ga_t = \phi_t ( \Ga_0), \qquad s_t (
\phi_t (x) ) = \phi_t^L (x) s_0 (x).$$ 
Let $Y$ be  the vector field  of $\R \times M$ given by $ Y(t,x) =
\frac{\partial}{\partial t}  + X_t (x)$ where $X_t$ is the Hamiltonian vector field of
$H_{t}$.

Introduce a time-dependent Toeplitz operator $(T_{k,t})$ with principal symbol
$(H_{t})$.
\begin{prop}
\label{prop:rho_transport} $(\frac{1}{ik} \partial_t \Psi_k + T_{k,t} \Psi_k)$ is a
  Lagrangian state family
  associated with $(\Ga, s)$ with full symbol $\hbar (\frac{1}{i}\nabla_Y +
  \zeta )b_0 +
  \bigo( \hbar^2)$ for some $\zeta \in \Ci ( \Ga)$.  
\end{prop}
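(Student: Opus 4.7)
The plan is to combine Proposition~\ref{prop:derive_etat_lag} with the action of Toeplitz operators on Lagrangian state families recalled at the end of Section~\ref{sec:famil-lagr-stat}. Both $(ik)^{-1}\partial_t \Psi_k$ and $T_{k,t}\Psi_k$ are Lagrangian state families associated with $(\Ga,s)$, so is their sum, and its full symbol reads
\[
(\tau + Q_0)b_0 \;+\; \hbar\bigl[(\tau + Q_0)b_1 + Pb_0 + Q_1(b_0)\bigr] \;+\; \bigo(\hbar^2),
\]
where $Q_0$ is multiplication by $H_t|_\Ga$ and $P = \frac{1}{i}\nabla_Z$ with $Z$ as in the previous proposition. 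The strategy is then to verify, first, that the $\hbar^0$ coefficient vanishes (an eikonal identity) and, second, that the $\hbar^1$ coefficient takes the form $(\frac{1}{i}\nabla_Y + \zeta)b_0$ for some function $\zeta$.

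For the eikonal step I would establish $\tau = -H_t|_\Ga$. Since the flow of $Y = \partial_t + X_t$ preserves $\Ga$, the vector field $Y$ is tangent to $\Ga$, and as $dt(Y) = 1$ the defining equation \eqref{eq:def_tau} yields $\nabla_Y s = i\tau s$. On the other hand, parametrising $\Ga$ along $r\mapsto(r,\phi_r(x_0))$ with $x_0\in\Ga_0$, formula~\eqref{eq:prequantum_lift} gives $s = e^{\frac{1}{i}\int_0^r H_u(\phi_u(x_0))\,du}\,\mathcal{T}_r^L(x_0)s_0(x_0)$; the parallel transport factor has vanishing covariant derivative along the $M$-projection of $Y$, so only the phase contributes and one obtains $\nabla_Y s = -iH_t|_\Ga\, s$. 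Hence $\tau = -H_t|_\Ga$, which kills both the $\hbar^0$ coefficient and the $b_1$ contribution at order $\hbar$.

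For the order-$\hbar$ step I would then rewrite $Pb_0 + Q_1(b_0) = \frac{1}{i}\nabla_Z b_0 + Q_1(b_0)$ so that the derivative part involves only $\nabla_Y$. Decomposing $\partial_t = Z + V$ with $Z\in T\Ga\otimes\C$ and $V\in T^{0,1}M$, one has $Y - Z = X_t + V$, tangent to $\Ga$ in the complexified sense. Using any extension $a_0$ of $b_0$ with $\con\partial a_0$ vanishing on $\Ga$, the inclusion $V\in T^{0,1}M$ gives $\nabla_V a_0 = 0$ on $\Ga$, so
\[
Pb_0 \;=\; \tfrac{1}{i}\nabla_Y b_0 \;-\; \tfrac{1}{i}\nabla_{X_t} a_0\bigr|_\Ga.
\]
The hard part will then be to show that $Q_1(b_0) = \frac{1}{i}\nabla_{X_t}a_0|_\Ga + \zeta'\, b_0$ for some $\zeta'\in\Ci(\Ga)$: this is the main obstacle, and it requires opening up the subprincipal contribution of a Toeplitz operator acting on a Lagrangian state in the spirit of \cite[Section~2.4]{C2003}, using the Bergman kernel asymptotics together with the Taylor expansion of $H_t$ transverse to $\Ga$. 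The leading transverse term should contribute precisely the derivative $\frac{1}{i}\nabla_{X_t}a_0|_\Ga$, the remaining contributions being multiplicative and absorbed into $\zeta'$. Granted this identification, the $\hbar^1$ symbol collapses to $(\frac{1}{i}\nabla_Y + \zeta)b_0$ with $\zeta = \zeta'$, which is the claim.
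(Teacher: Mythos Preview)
Your eikonal step is correct and matches the paper's: both verify $\tau + H_t|_\Ga = 0$ from the prequantum lift formula, which kills the $\hbar^0$ term and the $b_1$ contribution at order $\hbar$.

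For the order-$\hbar$ step, however, you and the paper diverge. You propose to open up $Q_1$ via Bergman kernel asymptotics and a Taylor expansion of $H_t$ transverse to $\Ga$, and you acknowledge this as the ``main obstacle'' without carrying it out. The paper avoids this computation entirely with a commutator trick: writing the order-$\hbar$ operator as a single differential operator $Q$ acting on $\Ci(\Ga,\C_I\boxtimes L')$, it computes
\[
\bigl[\tfrac{1}{ik}\partial_t + T_{k,t},\, T_k(f)\bigr] = \tfrac{1}{ik}\,T_k(Yf) + \bigo(k^{-2})
\]
for arbitrary $f\in\Ci(\R\times M)$, lets this act on $(\Psi_k)$, and reads off at the symbol level that $[Q,\,f|_\Ga] = \tfrac{1}{i}(Yf)|_\Ga$. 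Since this holds for all $f$, the operator $iQ$ is a derivation in the direction $Y$, hence $Q = \tfrac{1}{i}\nabla_Y + \zeta$ for some function $\zeta$. No explicit knowledge of $Q_1$ is needed.

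Your direct approach is not wrong in principle --- indeed the paper carries out exactly such a computation later (Proposition~\ref{prop:Lag_state_symbol_calculus}) when it needs the explicit value of $\zeta$ for Theorem~\ref{th:gamma}. But for the present proposition, which only asserts the \emph{form} $\tfrac{1}{i}\nabla_Y + \zeta$ without identifying $\zeta$, the commutator argument is both shorter and complete, whereas your proposal leaves the essential step unproved.
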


\begin{proof} By Proposition \ref{prop:derive_etat_lag} and the last paragraph
  of section \ref{sec:famil-lagr-stat}, we already know that $(\frac{1}{ik} \partial_t\Psi_k + T_{k,t} \Psi_k)$ is a
  Lagrangian state with full symbol
  \begin{gather} \label{eq:symb_total_prop}
    (\tau(t,x) + H_t(x)) (b_0 + \hbar b_1) + \hbar Q
  b_0 + \bigo (\hbar^2),
\end{gather}
where $Q$ is a differential operator acting on $\Ci ( \Ga, \C_I \otimes
  L')$. By differentiating \eqref{eq:prequantum_lift} in the definition of $s_t$ and by the fact that $\nabla s = i \tau dt \otimes s$, it comes that
\begin{gather} \label{eq:function_tau}
  \tau (t,\phi_t(x)) + H_{t}  (\phi_t(x)) =0,
\end{gather}
so the leading order term in
 \eqref{eq:symb_total_prop} is zero.
  Consider $f\in \Ci ( \R \times M)$ and compute the commutator
  \begin{xalignat*}{2} \bigl[ \tfrac{1}{ik} \partial_t +
    T_{k,t} , T_k (f) \bigr] & =
  \tfrac{1}{ik} \bigl( T_k( \partial_t f) + T_k ( \{ H_t, f \} \bigr) + \bigo ( k^{-2})
  \\ & =
  \tfrac{1}{ik} T_k ( Yf) + \bigo ( k^{-2}).
\end{xalignat*}   
Letting this act on our Lagrangian state family $\Psi_k$, it comes that $[Q,
f|_\Ga] = \frac{1}{i} (Yf)|_{\Ga}$. Since this holds for any $f$, this proves that $iQ$ is a
derivation in the direction of $Y$ so $iQ = \nabla_Y + i \zeta$ for some function
$\zeta$. 
\end{proof}

\begin{thm} \label{th:Lagrangian_State_propagation}
  For any Lagrangian state $(\Psi_{0,k} \in \mathcal{H}_k)$ associated with
  $(\Ga_0, s_0)$, the solution of the Schr\"odinger equation
\begin{gather} \label{eq:Schrod_lagrangian}
  \tfrac{1}{ik} \partial_t \Psi_k + T_{k,t} \Psi_k = 0, \qquad \Psi_k(0, \cdot) =
  \Psi_{0,k} 
\end{gather}
is a Lagrangian state family associated with $(\Ga, s)$ with symbol $b_0 + \bigo (
  \hbar)$ where $b_0$ satisfies the transport equation $\frac{1}{i} \nabla_Y b_0 + \zeta b_0
  = 0$.
\end{thm}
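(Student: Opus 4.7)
The strategy is to construct a formal Lagrangian state family $(\tilde\Psi_k)$ associated with $(\Gamma, s)$ that solves \eqref{eq:Schrod_lagrangian} modulo $\bigo(k^{-\infty})$ in $L^2$, then promote this formal solution to the genuine one via an energy estimate. By the uniqueness statement at the end of Section \ref{sec:famil-lagr-stat}, this will force $(\Psi_k)$ itself to be a Lagrangian state family associated with $(\Gamma, s)$, sharing the full symbol of $(\tilde\Psi_k)$ and in particular admitting $b_0$ as principal symbol.

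First I would solve the transport equation $\frac{1}{i}\nabla_Y b_0 + \zeta b_0 = 0$ on $\Gamma$, with $b_0|_{\Gamma_0}$ prescribed as the principal symbol of $(\Psi_{0,k})$. Since $q$ is a proper submersion and $Y$ lifts $\partial_t$ under the flow-induced diffeomorphism $\Gamma \simeq I \times \Gamma_0$, this is a linear first-order ODE along the curves $t \mapsto (t, \phi_t(x_0))$, admitting a unique smooth global solution. By Proposition \ref{prop:rho_transport}, any Lagrangian state family with principal symbol $b_0$ then makes the $\hbar$ coefficient of the residue's full symbol vanish. To kill the subsequent coefficients I would iteratively choose corrections $b_1, b_2, \ldots \in \Ci(\Gamma, \C_I \boxtimes L')$: extending the computations in the proofs of Propositions \ref{prop:derive_etat_lag} and \ref{prop:rho_transport} to higher orders, and invoking the full symbolic calculus for the action of $T_{k,t}$ on Lagrangian states recalled at the end of Section \ref{sec:famil-lagr-stat}, produces at each step an inhomogeneous transport equation $\frac{1}{i}\nabla_Y b_\ell + \zeta b_\ell = c_\ell$ whose source $c_\ell$ is a universal differential polynomial in $b_0, \ldots, b_{\ell-1}$. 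Each such equation admits a unique smooth solution on $\Gamma$ once $b_\ell|_{\Gamma_0}$ is fixed from the asymptotic expansion of $(\Psi_{0,k})$. Borel summing the series $\sum \hbar^\ell b_\ell$ as in Section \ref{sec:famil-lagr-stat} yields a Lagrangian state family $(\tilde\Psi_k)$ with $\tilde\Psi_k(0, \cdot) = \Psi_{0,k} + \bigo(k^{-\infty})$ and $\bigl(\frac{1}{ik}\partial_t + T_{k,t}\bigr)\tilde\Psi_k = \bigo(k^{-\infty})$ in $L^2$, with analogous control on every time derivative $\partial_t^p$ uniformly on compact subintervals of $I$.

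The main obstacle is the passage from the formal solution to the true one. Since $H_t$ is real, $T_{k,t} - T_{k,t}^\ast$ is a Toeplitz operator whose principal symbol vanishes and is therefore of operator norm $\bigo(k^{-1})$. Setting $R_k := \Psi_k - \tilde\Psi_k$, which satisfies $\frac{1}{ik}\partial_t R_k + T_{k,t} R_k = -\bigl(\frac{1}{ik}\partial_t + T_{k,t}\bigr)\tilde\Psi_k$ with initial data $R_k(0, \cdot) = \bigo(k^{-\infty})$ in $L^2$, a Grönwall-Duhamel argument gives $\|R_k(t, \cdot)\|_{L^2} = \bigo(k^{-\infty})$ uniformly on compact intervals. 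Differentiating the Schrödinger equation in $t$ and iterating the same estimate propagates the bound to $\|\partial_t^p R_k\|$ for every $p$, so that $(R_k)$ is exactly the kind of negligible family appearing in the uniqueness statement of Section \ref{sec:famil-lagr-stat}. This identifies $(\Psi_k)$ with $(\tilde\Psi_k)$ modulo such a negligible family and completes the proof.
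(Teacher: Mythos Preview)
Your proposal is correct and follows essentially the same approach as the paper: construct a formal Lagrangian state family by iteratively solving transport equations for the $b_\ell$, then compare it with the true solution via Duhamel's principle. You supply more detail than the paper's sketch, notably the observation that $T_{k,t}-T_{k,t}^{*}$ has vanishing principal symbol (needed for the energy inequality) and the inductive control of $\partial_t^{p}R_k$ required to invoke the uniqueness statement of Section~\ref{sec:famil-lagr-stat}.
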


Since the integral curves of $Y$ are $t \mapsto ( t , \phi_t(x))$, the
solution of the transport equation is 
\begin{gather} \label{eq:sol_transp}
b_0 (t, \phi_t (x) ) =  e^{\frac{1}{i} \int_0^t \zeta ( r, \phi_r(x)) \,
  dr } \mathcal{T}^{L'}_t (x) \, b_0 ( 0,x) .
\end{gather}
In the next section, we will give a geometric formula for $\zeta$ in terms of
the canonical bundle.  

\begin{proof} The proof is the same as for differential operators (see the proof of Theorem 20.1 in \cite{Shu} for instance), so we only
  sketch it. 
We successively construct the coefficients $b_\ell$ to solve $ \frac{1}{ik}
\partial_t \Psi_k
+ T_{k,t} \Psi_k = \bigo ( k^{-N-1})$ with initial condition  $\Psi_k(0, \cdot) =
  \Psi_{0,k} + \bigo ( k^{-N})$. At each step, we have to solve a transport
  equation $ \nabla_Y b_N + d b_N = r_N $ with initial condition $b_N ( 0,
  \cdot) = b_{0,N}$, which has a unique solution. This provides us with a
  Lagrangian state $(\Psi_k)$ such that both equations of \eqref{eq:Schrod_lagrangian}
  are satisfied up to a $\bigo ( k^{-\infty})$. Then, applying Duhamel's
  principle, we show that the difference
  between $\partial_t\Psi_k $ and the actual solution of \eqref{eq:Schrod_lagrangian}
  is a $\bigo ( k^{-\infty})$ uniformly on any bounded interval.
\end{proof}

\subsection{Transport equation} \label{sec:transport-equation}

We will now give a formula for the function $\zeta$ and solve the above transport equation.
Essential to our presentation are line bundle isomorphisms involving
the canonical bundle $K$ of $M$ and the determinant bundles $\bigwedge^n
T^*\Ga_t$ and $\bigwedge^{n+1} T^* \Ga$.

First, for any $t \in I$, let $K_t$ be the restriction of $K$ to $\Ga_t$. Then
we have an isomorphism
\begin{gather}\label{eq:iso_Kt_volume}
  K_t \simeq \det (T^*\Ga_t) \otimes \C
\end{gather}
defined by sending $\Om \in (K_t)_x = \bigwedge^{n, 0
} T_x^*M$ to its restriction to $T_x \Ga_t \subset T_xM$. This is an
isomorphism because $(T_x \Ga_t \otimes \C)  \cap T_x^{0,1}M = \{ 0 \} $, which follows from
the fact that $\Ga_t$ is Lagrangian. 

Second, $\Ga_t$ being a fiber of $\Ga \rightarrow I$, the linear tangent maps
to the injection $\Ga_t \rightarrow \Ga$ and the projection $\Ga \rightarrow \R$
give an exact
sequence
$$0 \rightarrow T_x \Ga_t \rightarrow T_{(t,x)} \Ga \rightarrow \R = T_t^*I
\rightarrow 0 .$$
Since $\R$ has a canonical volume element, we obtain an isomorphism
\begin{gather} \label{eq:iso_vol}
  \det(T^* \Ga_t) \simeq \det (T^* \Ga)|_{\Ga_t}
\end{gather}
defined in the usual way: for any $\al \in \bigwedge^n T^*_{(t,x)} \Ga$, one
sends $dt \wedge \al \in \bigwedge^{n+1} T^* _{(t,x)} \Ga $ into the
restriction of $\al$ to $T_x\Ga_t$.

Gathering these two isomorphisms , we get a third one:
\begin{gather}  \label{eq:iso_K_det_Gamma} 
K_{\Ga}  := ( \C_I \boxtimes K )|_{\Ga} \xrightarrow{\simeq} \det(T^*\Ga ) \otimes \C,
\qquad (1 \boxtimes \al )|_{\Ga} \mapsto j^* (dt \wedge \al)    
\end{gather}
for any $\al \in \Om^{n,0}(M)$ with $j$ the embedding $\Ga \rightarrow I
\times M$.  
On the one hand, $K_{\Ga}$ has a natural connection induced by the Chern connection of $K$,
which gives us a derivation $\nabla_Y$ acting on sections of $K_\Ga$. On the
other hand, the Lie derivative $\mathcal{L}_Y$ acts on the differential forms of $\Ga$, and
in particular on the sections of $\det (T^* \Ga)$. Under the isomorphism \eqref{eq:iso_K_det_Gamma},
$$\mathcal{L}_Y = \nabla_Y + i \theta$$ where $\theta \in \Ci ( \Ga)$ since $\mathcal{L}_Y$ and $\nabla_Y$ are
derivatives in the same direction $Y$.

\begin{thm} \label{th:gamma}
The function $\zeta$ defined in Proposition \ref{prop:rho_transport} satisfies the equality $\zeta = \frac{1}{2} \theta + H^{\s}|_\Ga$. 
\end{thm}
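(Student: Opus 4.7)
The plan is to split the identity $\zeta = \tfrac12\theta + H^{\s}|_\Ga$ into a contribution from the subprincipal symbol of $T_{k,t}$ and a purely geometric contribution from the canonical bundle.

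First I would reduce to the case $H^{\s} \equiv 0$. If $T_{k,t}$ is replaced by $T_{k,t} + k^{-1}T_k(g_t)$ for some smooth family $(g_t)$, then by the definition of the subprincipal symbol the new subprincipal becomes $H^{\s}_t + g_t$, while the extra term $k^{-1}T_k(g_t)\Psi_k$ contributes exactly $g_t|_\Ga\, b_0$ to the $\hbar$-coefficient of the full symbol of $(\tfrac{1}{ik}\partial_t + T_{k,t})\Psi_k$. Therefore $\zeta$ is shifted by $g_t|_\Ga$, and both sides of the claimed identity change by the same amount. Choosing $g_t = -H^{\s}_t$ reduces the theorem to showing $\zeta = \tfrac12\theta$ in the case of zero subprincipal symbol.

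In this case, I would extract the zeroth-order part $\zeta$ of $Q = \tfrac{1}{i}\nabla_Y + \zeta$ via the subprincipal commutator calculus recalled in the introduction. For a test function $f$, let $S_k = T_k(f - \tfrac{1}{2k}\Delta f)$, which has principal symbol $f$ and zero subprincipal symbol; then $ik[T_{k,t}, S_k]$ has principal symbol $\{H_t,f\}$ and subprincipal symbol $-\om_1(X_{H_t},X_f)$, with $\om_1 = i(\Theta_{L'} - \tfrac12\Theta_K)$. Combining this with $\tfrac{1}{ik}[\partial_t, S_k]$, the commutator $[\tfrac{1}{ik}\partial_t + T_{k,t}, S_k]$ admits a two-term expansion that I would apply to $\Psi_k$ and compare with the direct computation using $Q$: reading off the symbol at order $\hbar^2$ yields an identity of the form $\bigl[\zeta, f|_\Ga\bigr] b_0 + (\text{explicit $Y$-derivative terms in $f$ and $b_0$}) = -\om_1(X_{H_t},X_f)|_\Ga\, b_0$. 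The $\Theta_{L'}$-part of $\om_1$ is already encoded in the action of $\nabla_Y$ on sections of $L'$, so only the $-\tfrac12\Theta_K$ part affects the zeroth-order piece, and comparing with the definition of $\theta$ through the iso \eqref{eq:iso_K_det_Gamma} gives $\zeta = \tfrac12\theta$.

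The hard part will be the final identification: one must show that, under \eqref{eq:iso_K_det_Gamma}, the trace of $-\iota_Y\Theta_K$ along $T\Ga$ is exactly the function $i\theta$ defined by $\mathcal{L}_Y = \nabla_Y + i\theta$ on sections of $\det(T^*\Ga)\otimes\C$. This is a K\"ahler-geometric computation relating the defect between Lie and covariant derivatives to the canonical curvature, and requires careful handling of the splittings $T_xM\otimes\C = T_x^{1,0}M\oplus T_x^{0,1}M$ and the transversal decomposition $T_x\Ga_t\otimes\C \oplus T_x^{0,1}M = T_xM\otimes\C$ (which is valid because $\Ga_t$ is Lagrangian); in particular the coefficient $\tfrac12$ emerges from the $-\tfrac12\Theta_K$ in $\om_1$ combined with the projection from $K$ to $\det(T^*\Ga_t)\otimes\C$. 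A more computational alternative would be to work in local Darboux--K\"ahler coordinates at a fixed point of $\Ga$ and verify the identity by direct expansion, invoking naturality to conclude.
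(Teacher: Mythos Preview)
Your reduction to the case $H^{\s}\equiv 0$ is correct and is exactly what the paper does. The problem is the next step.

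The commutator scheme cannot isolate $\zeta$. Recall from Proposition~\ref{prop:rho_transport} that $Q=\tfrac{1}{i}\nabla_Y+\zeta$ with $\zeta$ a \emph{multiplication operator}. Consequently $[\zeta,f|_\Ga]=0$ for every test function $f$, and the term you write as ``$[\zeta,f|_\Ga]\,b_0$'' in your order-$\hbar^2$ identity is identically zero. The commutator $[\tfrac{1}{ik}\partial_t+T_{k,t},S_k]$ therefore sees only the first-order (derivation) part of $Q$; this is exactly how the paper \emph{derived} the form $Q=\tfrac{1}{i}\nabla_Y+\zeta$ in the first place, but it gives no information about the scalar $\zeta$ itself. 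At order $\hbar^2$ the remaining terms on both sides involve the unknown second-order operator $Q_2$ of the expansion and the unknown subleading operator $R_1$ in the Toeplitz-on-Lagrangian-state calculus for $S_k$, so nothing can be read off without an independent computation of at least one of these. Your subsequent claim that ``only the $-\tfrac12\Theta_K$ part affects the zeroth-order piece'' presupposes precisely the identification you are trying to prove.

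The paper takes the route you mention only as an afterthought: a direct local computation at a point $x_0\in\Ga_{t_0}$ in complex normal coordinates with $T_{x_0}\Ga_{t_0}$ spanned by the real directions. Proposition~\ref{prop:delta} computes $\theta(x_0)=\sum_j\bigl(H_{z_j\bar z_j}+H_{\bar z_j\bar z_j}\bigr)(x_0)$ by evaluating both $\nabla_Y$ and $\mathcal L_Y$ on $j^*(dt\wedge dz_1\wedge\cdots\wedge dz_n)$. Proposition~\ref{prop:Lag_state_symbol_calculus} computes, via stationary phase applied to the Bergman kernel integral, the full order-$\hbar$ symbol of $\Pi_k(f\Psi_k)$: it is $\bigl(\tfrac{1}{i}\nabla_U+\square f\bigr)\si$ with $\square f(x_0)=\sum_j\bigl(f_{z_j\bar z_j}+\tfrac12 f_{\bar z_j\bar z_j}\bigr)(x_0)$. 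Taking $f=H_t$ and adding the time-derivative piece from Proposition~\ref{prop:derive_etat_lag} gives $\zeta=\square H_t=\tfrac12\theta+\tfrac12\Delta H_t$, and $\tfrac12\Delta H_t$ is the subprincipal symbol of $T_k(H_t)$. So the factor $\tfrac12$ does not come from $\om_1$ via a commutator; it comes from the explicit shape of the Hessian of the Bergman phase $|z|^2-\tfrac12\sum z_i^2$ in the stationary-phase expansion. If you want to salvage your approach, you would have to replace the commutator argument by this direct computation of the subleading term of $T_k(f)\Psi_k$.
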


The proof is postponed to Section \ref{sec:proof-theo} since it does not help to
understand what follows and it is quite technical. On the one hand, we can
compute $\theta$ in terms of second derivatives of $H_t$, cf. Proposition \ref{prop:delta}. On the other hand,
we directly compute the function $\zeta$, cf. Proposition
\ref{prop:Lag_state_symbol_calculus}.

We will now give an explicit expression for the term involving $\zeta$ in the
solution \eqref{eq:sol_transp} of the transport equation in light of Theorem \ref{th:gamma}.
For any $t \in I$, the tangent map to $\phi_t$ restricts to an isomorphism from $T\Ga_0$
to $T\Ga_t$. By the identification \eqref{eq:iso_Kt_volume},
we get an isomorphism $\mathcal{E}_t$ from $K|_{\Gamma_0}$ to $K|_{\Gamma_t}$
lifting $\phi_t$. More precisely,  for any $x\in \Gamma_0$, $u \in K_x$ and $v\in \op{det}
 (T_x \Ga_0)$, we define $\mathcal{E}_t(x) u \in K_{\phi_t(x)}$ so that  
\begin{gather} \label{eq:def_E_transport}
(\mathcal{E}_t(x) u )\bigl( (T_x
\phi_t)_* v \bigr) = u
(v) .
\end{gather}
The parallel transport $\mathcal{T}_t^K$ restricts as well to an isomorphism
$K|_{\Gamma_0} \rightarrow K|_{\Gamma_t}$. Define the complex number $C_t (x)$ by $ \mathcal{E}_t
(x)= C_t(x) \mathcal{T}_t^K (x)$.

\begin{prop} \label{prop:solution_transport_equation}
  The solution of the transport equation $\frac{1}{i} \nabla_Y b + \zeta b =0$ with $b
  \in \Ci ( \Ga, L')$ is
\begin{gather} \label{eq:solution_transport}
  b (t,\phi_t(x)) = C_t(x)^{\frac{1}{2}} \, e^{\frac{1}{i}
\int_0^t H_r^{\s} ( \phi_r (x)) \, dr  } \,   \mathcal{T}_t^{L'} (x) \, b(0,x)  
\end{gather}
with the square root of $C_t(x)$ chosen continuously and $C_0 =1$.
\end{prop}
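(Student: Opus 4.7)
My plan is to combine the general solution formula \eqref{eq:sol_transp} with Theorem \ref{th:gamma}, which yields $\zeta = \frac{1}{2}\theta + H^{\s}|_\Ga$. Substituting and splitting the resulting exponential reduces the claim to the identity
\[
  C_t(x) = \exp\Bigl(\tfrac{1}{i}\int_0^t \theta(r, \phi_r(x))\,dr\Bigr),
\]
whose continuous square root starting at $C_0(x) = 1$ is precisely the prefactor in \eqref{eq:solution_transport}. So the real task is to compute $C_t$ in terms of $\theta$.

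To do this, I work along the integral curve $\tau \mapsto (\tau, \phi_\tau(x))$ of $Y$ in $\Ga$. The flow $\psi_\tau$ of $Y$ on $\Ga$ sends $(0, x')$ to $(\tau, \phi_\tau(x'))$ and so lifts $\phi_\tau$ fiberwise. Fix $\al_0 \in K_x \setminus \{0\}$ and let $\om_0 = \al_0|_{T_x\Ga_0}$ via \eqref{eq:iso_Kt_volume}; set $\om_\tau = (\phi_\tau)_* \om_0$. Unwinding \eqref{eq:def_E_transport} shows that under \eqref{eq:iso_Kt_volume} the element $\mathcal{E}_\tau(x)\al_0$ corresponds to $\om_\tau$, and hence under \eqref{eq:iso_K_det_Gamma} to the top form $\wt\Om_\tau \in \det T^*_{(\tau, \phi_\tau(x))}\Ga \otimes \C$ characterized by $\wt\Om_\tau(Y, v_1, \ldots, v_n) = \om_\tau(v_1, \ldots, v_n)$ for $v_i \in T_{\phi_\tau(x)}\Ga_\tau$. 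Extending this definition of $\wt\Om$ to a neighborhood of the curve by the same recipe, the identity $\om_\tau = (\phi_\tau)_*\om_0$ together with the fact that $\psi_\tau|_{\Ga_0} = \phi_\tau$ yields $\psi_\tau^* \wt\Om = \wt\Om$, equivalently $\mathcal{L}_Y \wt\Om = 0$ along the curve.

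Now let $\wt\be(\tau)$ denote the image under \eqref{eq:iso_K_det_Gamma} of the parallel transport $\mathcal{T}_\tau^K(x) \al_0$. Since $\nabla_Y (\mathcal{T}_\cdot^K \al_0) = 0$ along the curve, the defining relation $\mathcal{L}_Y = \nabla_Y + i\theta$ gives $\mathcal{L}_Y \wt\be = i\theta \wt\be$. From $\mathcal{E}_\tau = C_\tau \mathcal{T}_\tau^K$ we obtain $\wt\Om_\tau = C_\tau(x) \wt\be(\tau)$. Choosing any $\psi_\tau$-invariant nonvanishing section $\eta(\tau)$ of $\det T^*\Ga \otimes \C$ along the curve and writing $\wt\Om_\tau = A(\tau)\eta(\tau)$ and $\wt\be(\tau) = B(\tau)\eta(\tau)$, the invariance of $\wt\Om$ forces $A' = 0$, while $\mathcal{L}_Y \wt\be = i\theta \wt\be$ gives $B' = i\theta B$. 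Combined with the normalization $C_0(x) = 1$, this delivers $C_\tau(x) = \exp(-i\int_0^\tau \theta\,dr) = \exp(\tfrac{1}{i}\int_0^\tau \theta\,dr)$, as required.

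The main technical obstacle is the flow-invariance claim $\psi_\tau^* \wt\Om = \wt\Om$. It requires one to carefully unpack the two isomorphisms \eqref{eq:iso_Kt_volume} and \eqref{eq:iso_vol} to confirm that the defining identity \eqref{eq:def_E_transport} for $\mathcal{E}_\tau$ corresponds precisely to $\om_\tau = (\phi_\tau)_* \om_0$, together with the observation that $\psi_\tau$ on $\Ga$ restricts to $\phi_\tau$ on fibers. Once this is in place, the remainder is a one-parameter ODE comparison.
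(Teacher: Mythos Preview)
Your proof is correct and follows essentially the same approach as the paper's own argument: both reduce via \eqref{eq:sol_transp} and Theorem \ref{th:gamma} to identifying $C_t(x)$ with $\exp\bigl(\tfrac{1}{i}\int_0^t \theta\,dr\bigr)$, then establish this by showing that under the isomorphism \eqref{eq:iso_K_det_Gamma} the section $\tau \mapsto \mathcal{E}_\tau(x)\al_0$ is $\mathcal{L}_Y$-invariant while $\tau \mapsto \mathcal{T}_\tau^K(x)\al_0$ is $\nabla_Y$-parallel, and compare using $\mathcal{L}_Y = \nabla_Y + i\theta$. The paper streamlines your ``main technical obstacle'' by passing to the trivialization $I \times \Ga_0 \simeq \Ga$, $(t,x) \mapsto (t,\phi_t(x))$, which straightens $Y$ to $\partial_t$; the $\mathcal{L}_{\partial_t}$-invariant top forms are then visibly $dt \wedge \beta_0$ with $\beta_0 \in \Om^n(\Ga_0)$, and translating back gives $dt \wedge (\phi_t^*)^{-1}\al_0$, which one recognizes as $\mathcal{E}_t(x)\al_0$ via \eqref{eq:def_E_transport}---this replaces your direct verification of $\psi_\tau^*\wt\Om = \wt\Om$ and makes the auxiliary frame $\eta$ unnecessary.
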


\begin{proof}
In view of Equation \eqref{eq:sol_transp} and Theorem \ref{th:gamma}, it suffices to deal with the case $H^{\s}|_\Ga =0$. Moreover, observe that if $\tilde{b}$ satisfies $\nabla_Y \tilde{b} = 0$, then $b = f \tilde{b}$ solves $\frac{1}{i} \nabla_Y b + \zeta b = 0$ if and only if $\frac{1}{i} Y . f + \zeta f =0$. So it suffices to prove that $f: (t,\phi_t(x)) \mapsto C_t(x)^{1/2}$ is a solution of the latter equation.

First the isomorphism $I \times \Ga_0 \simeq \Ga$, $ (t,x) \rightarrow (t,
\phi_t(x))$ sends the vector field $\partial_t$ to $Y$. The solutions
of $\mathcal{L}_{\partial_t} \beta = 0 $ with $\beta \in \Om^{n+1} ( I \times \Ga_0)$ have the form $\beta = dt \wedge \beta_0$ with
$\beta_0 \in \Om^n ( \Ga_0)$. So the solutions of $\mathcal{L}_Y \al =0$ with
$\al \in \Om^{n+1}( \Ga)$ are parametrised by $\al_0 \in \Om^n ( \Ga_0)$ and
given by 
$$\al|_{(t,\phi_t(x))} = dt \wedge (\phi_t^*)^{-1} \al_{0}|_x.$$
Now, identify $K_{\Ga}$ and $\det(T^*\Ga) \otimes \C$ through
\eqref{eq:iso_K_det_Gamma}. Then by \eqref{eq:def_E_transport}, the previous equation becomes
$$\al|_{(t, \phi_t (x))} = \mathcal{E}_t (x) \, \al|_{(0,x)}.$$ 
Second, the solutions of $\nabla_Y \al' =0$ with now $\al' \in \Ci (\Ga, K_{\Ga})$ are given
by 
$$\al'|_{(t, \phi_t (x))} = \mathcal{T}^K_t (x) \, \al'|_{(0,x)}.$$ 
Assume that $\al'|_{(0,x)} = \al|_{(0,x)}$; then we have $\al
= C \al '$ with $C \in \Ci ( \Ga)$ defined by $C(t,\phi_t(x)) = C_t(x)$.  
Therefore
\[ 0 = \mathcal{L}_Y \al = \mathcal{L}_Y (C \al') = (Y . C) \al' + C \mathcal{L}_Y \al' = (Y . C) \al' + C \underbrace{\nabla_Y \al'}_{=0} + 2 i \zeta C \al' \]
so $Y.C + 2 i \zeta C =0$, hence $\frac{1}{i} Y . C^{1/2} + \zeta C^{1/2} =0$. 
\end{proof}

\section{Metaplectic correction} 
\label{sect:meta}

It is useful to reformulate the previous results with a half-form bundle.

\subsection{Definitions} 

Recall first some definitions. A {\em square root} $(B, \varphi)$ of a complex line bundle $A
\rightarrow N$ is a complex line bundle $B \rightarrow N$ with an isomorphism
$\varphi : B^{\otimes 2} \rightarrow A$. A {\em half-form bundle} of a complex manifold
is a square root of its canonical bundle. Since the group of isomorphism
classes of complex line bundles of a manifold $N$ is isomorphic to $H^2(N)$,
the isomorphism being the Chern class, a sufficient condition for a complex
manifold to have a half-form bundle is that its second cohomology group is
trivial. This condition will be sufficient for our purposes. Before we discuss
the uniqueness, let us explain how derivatives and connections can be
transferred from a bundle to its square roots.  

Assume that $(B, \varphi)$ is a square root of $A$. Then any derivative $D_B$ acting
on sections of $B$ induces a derivative $D_A$ acting on sections of $A$ such
that the Leibniz rule is satisfied
$$D_A ( u \otimes v ) = D_B( u) \otimes v +
u \otimes D_B (v), \qquad \forall \, u,v \in \Ci ( B)$$
The converse is true as well: any derivative $D_A$
of $A$  determines a derivative $D_B$ of $B$ such that the above identity is satisfied. Similarly a covariant derivative $\nabla^B$ of $B$ induces a covariant
derivative $\nabla^A$ of $A$ such that $\nabla^A( u \otimes v ) = \nabla^B(u)
\otimes v + u \otimes \nabla^B (v)$, and the converse holds as well. 

Two square roots $(B, \varphi)$ and $(B', \varphi')$ of $A$ are isomorphic if
there exists a line bundle isomorphism $\psi : B \rightarrow B'$ such that
$\varphi' \circ \psi^{2} = \varphi$. The isomorphism classes of square root of
the trivial line bundle $\C_N$ of $N$ are in bijection with $H^1(N, \Z_2)$. Indeed,
each square root of $\C_N$ has a natural flat structure with holonomy in
$\{ -1 , 1 \} \subset \op{U}(1)$, induced by the flat structure of $\C_N$. We easily check this determines the square root up to
isomorphism. Furthermore, the tensor product of line bundles defines an action
of square roots of $\C_N$ on the space of square roots of a
given line bundle $A$. This makes the set of isomorphism classes of square
root of $A$ a homogeneous space for the group $H^1(N, \Z_2)$. 

\subsection{Propagation in terms of half-form bundle} 

When $M$ has a half-form bundle $\delta$, we can reformulate the previous
results by introducing a new line bundle $L_1$ such that $L' = L_1 \otimes
\delta$. The relevant structures of $L_1$ and $\delta$ have a different
nature: 
\begin{itemize}
\item $L_1$ has a natural connection, its Chern connection, 
\item the restriction of $\delta$ to a Lagrangian submanifold $N$ of $M$ is a square
  root of $\op{det}( T^*N ) \otimes \C$, through the isomorphism $K|_N \simeq \op{det}( T^*N ) \otimes \C$.  
\end{itemize}
For instance, in our propagation results, on the one hand, the tangent
map to the flow defines a map from $\det (T^* \Ga_0)$ to $\det( T^* \Ga_t)$,
which gives the map  
$\mathcal{E}_t: K|_{\Ga_0} \rightarrow K|_{\Ga_t}$. We then introduce the
square root of $\mathcal{E}_t$
$$ [\mathcal{E}_t(x)]^{\frac{1}{2}}: \delta_x \rightarrow \delta_{\phi_t(x)}, \qquad
x \in \Ga_0 $$
which is equal to the identity at $t=0$. On the other hand, we can define the
parallel transport $\mathcal{T}_t^{L_1}$ from the connection of $L_1$.  Then \eqref{eq:solution_transport}
writes equivalently
\begin{gather} \label{eq:solution_transport_metaplectique}   b (t,\phi_t(x)) =e^{\frac{1}{i}
\int_0^t H^{\s} (r, \phi_r (x)) \, dr  } \,   \mathcal{T}_t^{L_1} (x) \otimes [\mathcal{E}_t(x)]^{\frac{1}{2}} \, b(0,x)  
\end{gather}
The transport equation $(\nabla_Y + i \zeta )b =0$ has a similar formulation in
terms of the decomposition $L' = L_1 \otimes \delta$. Here it is convenient to
lift everything to $\Ga$. So we consider $\C_I \boxtimes L' \rightarrow
\Ga$ as the tensor product of $\C_I \boxtimes L_1 \rightarrow \Ga$ and
$\delta_\Ga := (\C_I \boxtimes \delta \rightarrow \Ga)$. Then the transport
equation is 
\begin{gather} \label{eq:transport_metaplectique}
\bigl( ( \nabla_Y^{L_1} \otimes \op{id} + \op{id} \otimes
\mathcal{L}_Y^\delta) + i H^{\s} \bigl) b =0  
\end{gather}
On the one hand, $\nabla^{L_1}$ is the Chern connection of
$L_1$ with derivative $\nabla_Y^{L_1}$ acting on $\Ci ( \Ga, \C_I \boxtimes
L_1)$. On the other hand, 
$\mathcal{L}_Y^\delta$  is the derivative
of $\Ci ( \Ga, \delta_{\Ga})$ induced by the Lie derivative $\mathcal{L}_Y$ of
$\Ga$ through the isomorphism 
$$\delta_\Ga^2 \simeq K_{\Gamma} \simeq
\det (T^* \Ga) \otimes \C$$
defined by \eqref{eq:iso_K_det_Gamma}. More precisely, $\mathcal{L}_Y^\delta$
is the unique derivative such that $\mathcal{L}_Y (s^2) = 2 s \otimes \mathcal{L}_Y^\delta s$ for any
section $s\in  \Ci ( \Ga, \delta_{\Ga})$. Then Formula \eqref{eq:transport_metaplectique} follows from the relation between $\zeta$ and $\theta$ and the fact that $\nabla^{L'}_Y b = ( \nabla_Y^{L_1} \otimes \op{id} + \op{id} \otimes \nabla_Y^\delta) b$ where $\nabla^\delta$ is the connection on $\delta$ induced by the one on $K$, which satisfies $\nabla^{\delta}_Y = \mathcal{L}_Y^\delta - \frac{i}{2} \theta$.

Interestingly, these formulations can be used even when $M$ has no half-form
bundle. To give a meaning to Equation
\eqref{eq:solution_transport_metaplectique}, we need a square root $\delta$ of the
restriction of $K$ to the trajectory $\phi_{[0,t]}(x)$ of $x$ on the interval
$[0,t]$. This trajectory being an arc or a circle, such a square root exists.
In the circle case, there are two square roots up to isomorphism, but it is easy to see that the
right-hand side of \eqref{eq:solution_transport_metaplectique} does not depend
on the choice. Similarly we can give a meaning to the transport equation
\eqref{eq:transport_metaplectique} even when $M$ has no half-form bundle. Indeed a differential
operator of $\Ga$ is determined by its restriction to the open sets of any covering of
$\Ga$. And we can always introduce a half-form bundle on the neighborhood of
each point of $M$.

\subsection{Norm estimates} 
The introduction of half-form bundles is also useful when we estimate the
norm of a Lagrangian state. For instance, consider a Lagrangian state $\Psi_k
(  t)$ as in \eqref{eq:def_lagrangian}. Then, by \cite[Theorem
3.2]{C2006}, 
\begin{gather}\label{eq:estimation_norm_psi_t} 
  \| \Psi_k (t)  \|_{\mathcal{H}_k} ^2 = \int_{\Ga_t} \Om_t + \bigo ( k^{-1})
  \end{gather} 
where $\Om_t$ is a density  on $\Ga_t$, which is given in terms of the
principal symbol $b_0 ( \cdot, t)$ of $\Psi_k ( t)$ as follows. We
assume that $L' = L_1 \otimes
\delta$ with $\delta$ a half-form bundle. Again we treat $\delta$ and $L_1$ in completely different ways. On the one hand, $L_1$ has a natural
metric so $L_1 \otimes \con{L}_1 \simeq \C$. On the other hand,  
$\delta|_{\Ga_t}$ being a square 
root of $\det(T^* \Ga_t) \otimes \C$, the identity $z \con{z} = |z^2|$ induces
an isomorphism between $ \delta|_{\Ga_t} \otimes \con{\delta}|_{\Ga_t} $ and
the bundle $|\bigwedge| T^* \Ga_t \otimes \C$ of densities. So we have an
isomorphism 
\begin{gather} \label{eq:norm_pointwise_symbol}
 L'|_{\Ga_t} \otimes \con{L'}_{\Ga_t} \simeq  |\textstyle{\bigwedge}| T^* \Ga_t \otimes
 \C.
\end{gather}
Then $\Om_t$ is the image of $b_0 (\cdot,t) \otimes \con{b}_0 ( \cdot , t)$ by
\eqref{eq:norm_pointwise_symbol}. When $M$ does not have a half-form bundle,
we can still define the isomorphism \eqref{eq:norm_pointwise_symbol} by
working locally and the global estimate \eqref{eq:estimation_norm_psi_t} still
holds. The normalization $\bigl(k/ 2\pi\bigr)^{\frac{n}{4}}$ in
the definition \eqref{eq:def_lagrangian} has been chosen to obtain this
formula.

Interestingly the isomorphism \eqref{eq:iso_K_det_Gamma} is also meaningful
for our norm estimates. Indeed, consider now $b_0$ as a section of $\C_I
\boxtimes L' \rightarrow \Ga$; repeating the previous considerations to $
(\C_I \boxtimes \delta)|_{\Ga}$ and $( \C_I \boxtimes L_1)|_{\Ga}$, we define
a density $\Om$ on $\Ga$ such that
\begin{gather} \label{eq:estimation_norm_psi} 
\int_I f(t)\; \| \Psi_k (t) \|_{\mathcal{H}_k} ^2 \; dt = \int_\Ga  (f\circ q) \; \Om + \bigo ( k^{-1}) , \qquad \forall \; f \in \Ci_0 (I). 
\end{gather}
where $q$ is the projection $\Ga \rightarrow I$. This follows from
(\ref{eq:estimation_norm_psi_t}), because $\Om_t$ is the restriction of
$\iota_{\partial_t} \Om$ to $\Ga_t$ and  a geometric version of Fubini's theorem tells us that
$\int_I f(t) \int_{\Ga_t} \Om_t = \int_\Ga (f\circ q ) \; \Om  $.

\section{The quantum propagator}
\label{sect:propagator}

In this section, we prove  Theorem \ref{th:intro1}. We will apply the previous considerations to $M \times \con{M}$, $L \boxtimes
\con{L}$ and $L' \boxtimes \con{L}'$ instead of $M$, $L$ and  $L'$.
The 
holomorphic sections of $(L\boxtimes \con{L})^k \otimes (L' \boxtimes
\con{L}')$ are the Schwartz kernels of the endomorphisms of $\Hilb_k$.

The symplectic structure of $\con{M}$ being the opposite of $\om$, the diagonal $\Delta_M$ is a
Lagrangian submanifold of $M \times \con{M}$. There is  a canonical flat section
$s : \Delta_M \rightarrow L \boxtimes \con{L}$ defined by $s(x,x) =  u \otimes \con{u}$ where $u \in L_x$ is any vector
of norm $1$. The Lagrangian states corresponding to $(\Delta_M , s)$
are the Toeplitz operators up to a factor $(\frac{k}{2 \pi})^{\frac{n}{2}}$.  More precisely, the Schwartz kernel of
$(\frac{k}{2 \pi})^{-\frac{n}{2}} T_k(f)$ is a Lagrangian state associated with $(\Delta_M, s)$
with principal symbol $f$, where we identify the restriction of $L' \boxtimes \con{L}'$ to the
diagonal with the trivial line bundle $\C_M = L' \otimes \con{L}'$ by using the
Hermitian metric of $L'$. This applies in particular to the identity of
$\mathcal{H}_k$, which is the Toeplitz operator $T_k(1)$ and is actually a reformulation of a theorem by Boutet de
Monvel-Sj\"ostrand \cite{BoSj,C2003op}. 

By Theorem \ref{th:Lagrangian_State_propagation}, the Schwartz kernel of the quantum propagator
$(U_{k,t})$ multiplied by $(\frac{k}{2 \pi})^{-\frac{n}{2}}$ is a Lagrangian state family, associated with the graph of
$\phi_t$ and its prequantum lift. Indeed, in the Schr\"odinger equation
\eqref{eq:schrod_equation}, we can interpret the product $T_{k,t} U_{k,t}$ as the
action of the Toeplitz operator $T_{k,t} \boxtimes \op{id}$ on $U_{k,t}$. Its
principal symbol is $H_t \boxtimes 1$, so its Hamiltonian flow is $\phi_t
\boxtimes \op{id}$. There is no difficulty to deduce Formula
\eqref{eq:leading_order_prop_quant} from Proposition \ref{prop:solution_transport_equation} 
except for the relation between  $\mathcal{E}_t$ and $\mathcal{D}_t$.

\begin{lm} \label{lem:toutestla}
  $\mathcal{E}_t
  (x,x) ( \op{id}_{K_x} ) = \mathcal{D}_t(x)$.
\end{lm}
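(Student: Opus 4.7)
The plan is to unpack both sides through the restriction isomorphism \eqref{eq:iso_Kt_volume}, $K|_{\Gamma_s}\simeq\det(T^*\Gamma_s)\otimes\mathbb{C}$, applied at the two points $p_0=(x,x)\in\Gamma_0=\Delta_M$ and $p_t=(\phi_t(x),x)\in\Gamma_t=\op{graph}(\phi_t)$ in $M\times\overline{M}$. I will first introduce the real-linear isomorphisms $\rho_0\colon T_{p_0}\Delta_M\to T_xM$, $(v,v)\mapsto v$, and $\rho_t\colon T_{p_t}\Gamma_t\to T_xM$, $(T_x\phi_t\,v,v)\mapsto v$, and observe that they conjugate the tangent map of $\phi_t\boxtimes\op{id}$ at $p_0$ to the identity of $T_xM$. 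Combined with the defining property \eqref{eq:def_E_transport} of $\mathcal{E}_t$, this shows that, under the corresponding restriction maps $\iota_0\colon K_{p_0}(M\times\overline{M})\to\wedge^{2n}T_x^*M\otimes\mathbb{C}$ and $\iota_t\colon K_{p_t}(M\times\overline{M})\to\wedge^{2n}T_x^*M\otimes\mathbb{C}$, the map $\mathcal{E}_t(x,x)$ becomes the identity, so that $\mathcal{E}_t(x,x)=\iota_t^{-1}\circ\iota_0$.

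Next I will compute $\iota_0$ and $\iota_t$ explicitly. Writing $K_{p_0}(M\times\overline{M})=K_x\otimes\overline{K_x}$ and $K_{p_t}(M\times\overline{M})=K_{\phi_t(x)}\otimes\overline{K_x}$, the standard shuffle formula for the wedge of an $(n,0)$-form on one factor against a $(0,n)$-form on the other yields, via $\rho_0$ and $\rho_t$, the formulas $\iota_0(\alpha\otimes\overline{\beta})=\alpha\wedge\overline{\beta}$ and $\iota_t(\alpha'\otimes\overline{\beta})=(T_x\phi_t)^*\alpha'\wedge\overline{\beta}$ as elements of $\wedge^{2n}T_x^*M\otimes\mathbb{C}$.

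To conclude, I pick a unit vector $e\in K_x$; the Hermitian identification $\op{End}(K_x)\simeq K_x\otimes\overline{K_x}$ sends $\op{id}_{K_x}$ to $e\otimes\overline{e}$, and hence $\iota_0(\op{id}_{K_x})=e\wedge\overline{e}$. To invert $\iota_t$ at this element, take $\beta=e$; the equation $\iota_t(\alpha'\otimes\overline{e})=e\wedge\overline{e}$ then reduces to $(T_x\phi_t)^*\alpha'=e$, whose unique solution by \eqref{eq:defkg} is $\alpha'=K(T_x\phi_t)\,e=\mathcal{D}_t(x)\,e$. Thus $\mathcal{E}_t(x,x)(\op{id}_{K_x})=\mathcal{D}_t(x)\,e\otimes\overline{e}$, and under $K_{\phi_t(x)}\otimes\overline{K_x}\simeq\op{Hom}(K_x,K_{\phi_t(x)})$ this element corresponds to the map $a\mapsto\langle a,e\rangle\mathcal{D}_t(x)\,e=\mathcal{D}_t(x)\,a$, i.e., to $\mathcal{D}_t(x)$ itself. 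The main delicate point will be the bookkeeping for the Hermitian identification $\overline{K_x}\simeq K_x^*$ and the shuffle computation defining $\iota_t$, but once an orthonormal frame of $K_x$ is fixed these are routine.
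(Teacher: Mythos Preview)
Your proof is correct and follows essentially the same route as the paper's: both identify $\mathcal{E}_t(x,x)$ with $\iota_t^{-1}\circ\iota_0$ via the restriction isomorphisms, compute $\iota_t(\alpha'\otimes\overline{e})=(T_x\phi_t)^*\alpha'\wedge\overline{e}$ using $p_1\circ h=T_x\phi_t$ and $p_2\circ h=\op{id}$, and then solve $(T_x\phi_t)^*\alpha'\wedge\overline{e}=e\wedge\overline{e}$. The only difference is cosmetic: you invoke \eqref{eq:defkg} directly to produce $\alpha'=K(T_x\phi_t)e$ satisfying the stronger identity $(T_x\phi_t)^*\alpha'=e$ (and uniqueness then follows since $\iota_t$ is an isomorphism of lines), whereas the paper passes through the $(1,0)$-projection to rewrite the wedge equation as $((T_x\phi_t)^{1,0})^*\beta=\alpha$; your phrase ``reduces to'' is slightly loose but the logic is sound.
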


Everything relies on the identification \eqref{eq:iso_Kt_volume} which in our
case is an isomorphism between $K_{\phi_t(x)} \otimes \con{K}_x$ and the space
of volume forms on the graph of $T_x \phi_t$. On the one hand, the elements of $K_{\phi_t(x)}
\otimes \con{K}_x$ will be viewed as  morphisms from $K_x$ to $
K_{\phi_t(x)}$. On the other hand, the graph
of $T_x \phi_t$ is naturally isomorphic with $T_xM$ through the map $\xi
\rightarrow (T_x \phi_t (\xi), \xi )$. So \eqref{eq:iso_Kt_volume} becomes an isomorphism 
\begin{gather} \label{eq:iso}
\op{Mor} ( K_x, K_{\phi_t(x)} ) \simeq \det (T^*_x M) \otimes \C
\end{gather}
Now the tangent map to the flow $\phi_t \boxtimes \op{id}$ sends the
graph of $T_x \phi_0$ to the graph of $T_x \phi_t$, and with our
identifications, it becomes the identity of $T_x M$. So the map $\mathcal{E}_t
(x,x)$ is the isomorphism
$$ \op{Mor} ( K_x, K_x) \simeq \op{Mor} ( K_x, K_{\phi_t(x)})  $$
otained by applying \eqref{eq:iso} with $t=0$ and then the inverse of
\eqref{eq:iso}. 

\begin{proof}[Proof of Lemma \ref{lem:toutestla}, technical part] First we
  claim that \eqref{eq:iso} sends a morphism $\psi: K_x \rightarrow K_{\phi_t(x)}$ to
$$ ((T_x \phi_t)^* \psi(\al) ) \wedge \con{\al} $$ where $\al \in K_x$ is any
vector with 
norm $1$. Indeed, $\psi$ is first identified with $\psi(\al) \otimes \con{\al}
\in K_{\phi_t(x)} \otimes \con{K}_x$. Then it is viewed as the $2n$-form of $T_xM
\oplus T_xM$ given by $p_1^* \psi(\al) \wedge p_2^* \con{\al}$ where $p_1$ and
$p_2$ are the projections $T_xM \oplus T_x M \rightarrow T_xM$ onto the first
and the second factor respectively. Then it is restricted to the graph of $T_x
\phi_t$ which is identified with $T_xM$ via the map $h(\xi ) = ( T_x \phi_t
(\xi), \xi)$, so we obtain
$$ h^{*} (p_1^* \psi(\al) \wedge p_2^* \con{\al}) = (( T_x \phi_t)^* \psi(\al)
) \wedge \con \al$$
because $p_1 \circ h = T_x \phi_t$ and $p_2 \circ h = \op{id}$. 

For $t=0$ and $\psi = \op{id}$, we have $(( T_x \phi_t)^* \psi(\al)
) \wedge \con \al = \al \wedge \con \al$.   
So we have to prove that for $\be =\mathcal{D}_t(x) (\al)$ 
\begin{gather} \label{eq:but}
((T_x \phi_t)^*  \beta ) \wedge \con{\al} = \al \wedge \con{\al}
\end{gather}
This is equivalent to $j^*(T_x \phi_t)^* \beta  = \al$
where $j$ is the injection $T_x^{1,0} M \rightarrow T_xM \otimes \C$. Since
$\beta \in K_{\phi_t(x)}$, we have $\pi^*\beta = \beta$ where $\pi$ is the
projection of $T_{\phi_t(x)}M \otimes \C$ onto the $(1,0)$-subspace with
kernel the $(0,1)$-subspace. So we have to show that $( \pi\circ(T_x \phi_t)
\circ j) ^* \beta = \al$. But $ \pi\circ(T_x \phi_t)
\circ j =  ( T_x \phi_t)^{1,0}$, so \eqref{eq:but} is equivalent to
$(( T_x \phi_t)^{1,0})^* \be = \al$. And this last equality is actually the definition
of $\be =  \mathcal{D}_t(x) (\al)$ .
\end{proof}

\begin{thm} \label{th:quantum_propagator}
  Let $(T_{k,t},\, t\in I)$ be a smooth family of 
  Toeplitz operators with real principal symbol $H_t$ and subprincipal symbol
  $H^{\s}_{t}$. Then the Schwartz kernel of the quantum propagator of
  $(T_{k,t})$ multiplied by $(\frac{k}{2 \pi})^{-\frac{n}{2}}$ is a Lagrangian state family associated with $(\Ga, s , \si)$
  given by $\Ga = \{ ( t,
  \phi_t(x),x) /\; t \in I, \; x \in M \}$ and
\[	\begin{cases} s( t, \phi_t(x), x)  = \phi_t^L (x): L_x \rightarrow
   L_{\phi_t(x)}, \\[2mm]
   \si (t, \phi_t(x), x )  = \bigl[ \rho_t(x) \bigr]^{\frac{1}{2}} e^{\frac{1}{i} \int_0^t H^{\s}_{r} (\phi_r(x)) \;
    dr } \mathcal{T}_t^{L'} (x): L'_x \rightarrow L'_{\phi_t(x)}, 
\end{cases} \]
where $(\phi_t)$ is the Hamiltonian flow of $H_t$, $\phi_t^L$ its prequantum
lift, $\mathcal{T}^{L'}_t$ its parallel transport lift to $L'$ and  $
\mathcal{D}_t(x) = \rho_t(x) \mathcal{T}_t^K(x)$ with $\mathcal{D}_t(x) = K(
T_x \phi_t): K_x
\rightarrow K_{\phi_t(x)}$. 
\end{thm}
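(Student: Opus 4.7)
The plan is to apply the general propagation result Theorem \ref{th:Lagrangian_State_propagation} on the product $M \times \con{M}$, equipped with prequantum bundle $L \boxtimes \con{L}$ and auxiliary bundle $L' \boxtimes \con{L}'$, to the Cauchy problem satisfied by the Schwartz kernel of $U_{k,t}$. The starting input is the Boutet de Monvel--Sj\"ostrand description recalled just before the theorem: the kernel of $\op{id}$ is $(k/2\pi)^{n/2}$ times a Lagrangian state on $M \times \con{M}$ associated with $(\Delta_M, s)$, where $s(x,x) = u \otimes \con{u}$ for any unit $u \in L_x$, whose principal symbol is identically $1$ under the identification of $L' \boxtimes \con{L}'$ on the diagonal with $\C_M$ via the Hermitian metric of $L'$.

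First I would rewrite \eqref{eq:schrod_equation} as a Schr\"odinger equation on $M \times \con{M}$: the composition $T_{k,t} U_{k,t}$ corresponds to the action on kernels of the Toeplitz operator $T_{k,t} \boxtimes \op{id}$, whose principal symbol is $H_t \boxtimes 1$ and whose subprincipal symbol is $H^{\s}_t \boxtimes 1$. Its Hamiltonian flow is $\phi_t \boxtimes \op{id}$, so the initial Lagrangian $\Delta_M$ is propagated to the graph of $\phi_t$, which is exactly $\Ga_t$ in the statement. Applying Theorem \ref{th:Lagrangian_State_propagation} then yields that $(k/2\pi)^{-n/2} U_{k,t}$ is a Lagrangian state family associated with $(\Ga, s)$, and the prequantum lift of $\phi_t \boxtimes \op{id}$ applied to $u \otimes \con{u}$ equals $\phi_t^L(x) u \otimes \con{u}$, matching $s(t, \phi_t(x), x) = \phi_t^L(x)$ once $L \boxtimes \con{L}$ on the graph is identified with the morphism bundle $L_x \to L_{\phi_t(x)}$ via the Hermitian metric.

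Next I would apply Proposition \ref{prop:solution_transport_equation} to the transport equation obeyed by the principal symbol $b_0$, with initial data $b_0|_{t=0} \equiv 1$. On the graph this gives
\[
 b_0(t, \phi_t(x), x) = [C_t(x,x)]^{\frac{1}{2}} \, e^{\frac{1}{i}\int_0^t H^{\s}_r(\phi_r(x))\,dr} \, \mathcal{T}_t^{L' \boxtimes \con{L}'}(x,x),
\]
where the parallel transport in $L' \boxtimes \con{L}'$ along the diagonal reduces to $\mathcal{T}_t^{L'}(x)$ under the standard Hermitian identification. Lemma \ref{lem:toutestla}, proved immediately above, identifies $\mathcal{E}_t(x,x)$ with $\mathcal{D}_t(x)$, hence $C_t(x,x) = \rho_t(x)$, producing exactly the formula claimed for $\si$.

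The main obstacle is bookkeeping rather than a deep new step: one must systematically translate each object on $M \times \con{M}$ (the prequantum lift, parallel transport in $L' \boxtimes \con{L}'$, the canonical bundle, and the comparison constant $C_t$ of Proposition \ref{prop:solution_transport_equation}) into the corresponding object on $M$, using the Hermitian identifications and the $(1,0)/(0,1)$ decompositions used throughout the paper. The only genuinely geometric input is Lemma \ref{lem:toutestla}, which handles the identification of the holomorphic determinant $\mathcal{D}_t$ with the volume-form isomorphism $\mathcal{E}_t$ arising from the propagation picture; once this is in hand the proof assembles Theorem \ref{th:Lagrangian_State_propagation}, Proposition \ref{prop:solution_transport_equation} and the initial condition with no further work.
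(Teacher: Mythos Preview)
Your proposal is correct and follows essentially the same route as the paper: apply Theorem \ref{th:Lagrangian_State_propagation} on $M\times\con{M}$ with the identity kernel as initial Lagrangian state, interpret $T_{k,t}U_{k,t}$ as $(T_{k,t}\boxtimes\op{id})U_{k,t}$, use Proposition \ref{prop:solution_transport_equation} for the transport equation, and invoke Lemma \ref{lem:toutestla} to identify $C_t(x,x)$ with $\rho_t(x)$. Your write-up is in fact slightly more explicit than the paper's own discussion about the subprincipal symbol and the reduction of the $L'\boxtimes\con{L}'$ parallel transport, but the structure is identical.
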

As explained in the introduction, it is very natural to express the symbol by
using a half-form bundle:
$$ \si (t, \phi_t(x), x ) = e^{\frac{1}{i} \int_0^{t} H^{\s}_{r} ( \phi_r(x)) \; dr}  \mathcal{T}_{t}^{L_1}
(x)  \otimes  \bigl[ \mathcal{D}_t (x) \bigr]^{\frac{1}{2}} 
$$
where $L' = L_1 \otimes \delta$ and $\bigl[ \mathcal{D}_t (x) \bigr]^{\frac{1}{2}}: \delta_x \rightarrow
\delta_{\phi_t(x)}$ is the continuous square root of $\mathcal{D}_t(x)$ equal to $1$ at $t=0$.  

\begin{rmk}\label{rmk:prep_trace} In our next paper on trace formulas, we will use the following
  expression for $\rho_t(x)$. Denote by $\ga : \R \rightarrow M$, $ t
  \mapsto \phi_t(x)$ the trajectory of
  $x$. Choose a unitary frame $s_K$ of $\ga^*K$ and write $\nabla  s_K
  = \frac{1}{i} f_K dt \otimes s_K$. Then
\begin{gather} \label{eq:rho_tx-=-c_t}
  \rho_t(x) = c_t e^{-i \int_0^t f_K(r) \, dr}  \quad \text{ where } \quad  c_t  \Om_t
  ( \xi_t  u) = \Om_0 ( \xi_0 u)  .
\end{gather}
Here $u$  is any generator of $\wedge^{\op{top}}
 T_xM$, $\xi_t$ is the linear map  $T_x M \rightarrow T_{\ga(t)}M \oplus T_x M$ sending $X$
  into $(T_x \phi_t(X), X)$, and $\Om_t$ is the $2n$-form of $T_{\ga(t)}M \oplus
  T_x M$ equal to $\Om_t
  = p_1^* s_K(t) \wedge p_2^* \con{s}_K(0)$, $p_1$ and $p_2$ being the projection on
  $T_{\ga(t)}M$ and $T_xM$ respectively. 

  The proof of \eqref{eq:rho_tx-=-c_t} is that on the one hand $\mathcal{E}_t
  (x,x) \Om_0 = c_t \, \Om_t$ and on the other hand  $\mathcal{T}^{K
    \boxtimes \bar{K}}_t (x,x) \Om_0 = e^{i \int_0^t f_K(r)dr} \Om_t$. \qed
\end{rmk}

\section{Fourier Transform of Lagrangian state families}
\label{sect:imm_lag}

In this section, we investigate how the (inverse) semiclassical Fourier transform acts on the Lagrangian state families introduced in Section \ref{sec:famil-lagr-stat}. It turns out that the outcomes are states which are associated with Lagrangians that are only immersed; hence we need to generalize the usual definition of Lagrangian states recalled at the beginning of Section \ref{sect:lag_fam}.

\subsection{Symplectic preliminaries} 
Consider the same data $\Ga \subset I \times M$ and $s\in \Ci ( \Ga, \C_I
\boxtimes L)$ as in Section \ref{sec:famil-lagr-stat}. So we assume that $\Ga
\rightarrow I$, $(t,x) \mapsto t$ is a proper submersion and that for any $t
\in I$, $\Ga_t$ is a Lagrangian submanifold of $M$ and the restriction of $s$
to $\Ga_t$ is flat and unitary. Recall that $\nabla s = i \tau dt \otimes s$ for a function
$\tau \in \Ci ( \Ga)$, cf \eqref{eq:def_tau}.
For any $E$ in $\R$, introduce
\begin{gather} \label{eq:def_gae}
\Ga^E := \{ (t,x) \in \Ga \ | \ \tau (t,x) + E =
0 \} .
\end{gather}
\begin{prop} \label{prop:symplectic_Gamma_E}
  Let $E$ be a regular value of $- \tau$. Then
  \begin{enumerate}
\item $\Ga^E$ is a submanifold of $\Ga$ and $j_E : \Ga^E \rightarrow M$, $(t,x)
  \mapsto x$ is a Lagrangian immersion,
  \item  $j: \Ga \rightarrow \R \times M$, $(t,x)
    \mapsto (\tau (t,x) , x)$ is an immersion at any $(t_0,x_0) \in \Ga^E$, 
\item the section $s^E$ of $(j^E)^*L$ given by $s^E(t,x) = e^{it E }
  s(t,x)$ is flat. 
  \end{enumerate}
\end{prop}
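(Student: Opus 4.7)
The plan is to first compute the pullback of $\om$ to $\Ga$, since this is the key structural input and the three claims will all follow from it by linear algebra. Because the Chern connection of $L$ has curvature $\frac{1}{i}\om$ and $s$ satisfies $\nabla s = i\tau\, dt \otimes s$, writing $\nabla s = -i\beta \otimes s$ with $\beta = -\tau\, dt$ and using that the curvature of a unit section equals $-i\, d\beta$ yields
$$\om|_\Ga = -d\tau \wedge dt,$$
where $\om|_\Ga$ denotes the pullback of $\om$ via the composition $\Ga \hookrightarrow I \times M \to M$.

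For part 1, regularity of $E$ ensures via the implicit function theorem that $\Ga^E = \tau^{-1}(-E)$ is a submanifold of $\Ga$ of dimension $n$, with tangent space $\ker d\tau \subset T\Ga$. Isotropy of $j_E$ is immediate: for $v, v' \in T\Ga^E$,
$$j_E^*\om(v, v') = (-d\tau \wedge dt)(v, v') = 0,$$
since $d\tau$ vanishes on both. To show $j_E$ is an immersion, suppose $(a, 0) \in T_{(t_0, x_0)}\Ga^E$ with $a \neq 0$; then $(1, 0) \in T\Ga$ and also lies in $\ker d\tau$. Contracting the identity $\om|_\Ga = -d\tau \wedge dt$ with $(1, 0)$ gives $\iota_{(1,0)}\om|_\Ga = -d\tau(1, 0)\, dt + d\tau$ on $T\Ga$; but $(1, 0)$ projects to $0 \in TM$, so $\iota_{(1,0)}\om|_\Ga = 0$, forcing $d\tau = d\tau(1,0)\, dt$. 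Since $d\tau(1, 0) = 0$, this gives $d\tau \equiv 0$ on $T\Ga$, contradicting regularity of $E$. Hence $Dj_E$ is injective, and since $\dim \Ga^E = n = \frac{1}{2}\dim M$, $j_E$ is a Lagrangian immersion.

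Part 2 follows by the same argument: $Dj(a, w) = (d\tau(a, w), w)$, so $\ker Dj = \{(a, 0) \in T\Ga : d\tau(a, 0) = 0\}$, which is trivial at points of $\Ga^E$ by the computation above. Part 3 is a direct calculation:
$$\nabla s^E = \nabla(e^{itE} s) = iE\, e^{itE}\, dt \otimes s + e^{itE}\cdot i\tau\, dt \otimes s = i(E + \tau)\, dt \otimes s^E,$$
which vanishes on $\Ga^E$ since $\tau + E = 0$ there. I expect the main obstacle to be just the identity $\om|_\Ga = -d\tau \wedge dt$; once in hand, the injectivity of $Dj_E$ and $Dj$ both reduce to controlling how $d\tau$ detects the vertical-in-time direction $(1, 0)$, which is exactly what the regularity hypothesis provides.
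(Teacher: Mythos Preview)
Your proof is correct and follows essentially the same route as the paper. The paper's key relation $\om(\xi_1,\xi_2) = a_1\, d\tau(Y_2) - a_2\, d\tau(Y_1)$ is exactly your identity $\om|_\Ga = -d\tau \wedge dt$ written pointwise; you obtain it via the abstract curvature formula for a unitary frame, while the paper computes $[\nabla_{Y_1},\nabla_{Y_2}]s$ by hand after extending $Y_1,Y_2$ to commuting fields, but this is the same computation. The remaining steps (isotropy from $d\tau|_{T\Ga^E}=0$, injectivity of $Dj_E$ and $Dj$ from the impossibility of $(1,0)\in T\Ga$ with $d\tau(1,0)=0$ at a regular point, and the direct check of flatness of $s^E$) match the paper's argument essentially line for line.
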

\begin{proof}
For any tangent vectors $Y_1 = ( a_1, \xi_1)$, $Y_2 = (a_2, \xi_2)$ in $T_{(t_0,x_0)}
\Ga \subset \R \oplus T_{x_0} M$, we have
\begin{gather} \label{eq:relation_utile}
  \om (\xi_1, \xi_2) = a_1 d\tau (Y_2) - a_2 d \tau ( Y_1)
\end{gather}
To prove this, we extend $Y_1$ and $Y_2$ to vector fields of $\Ga$ on a neighborhood of $(t_0,x_0)$
so that $[Y_1, Y_2]
=0$. Then, the curvature of $\nabla$ being $\frac{1}{i} \om$,  we have that
$$ [\nabla_{Y_1} , \nabla_{Y_2} ] = \tfrac{1}{i} \om ( \xi_1, \xi_2).$$
Furthermore, since $dt(Y_j)  = a_j$, we have that $\nabla_{Y_j} s = i a_j \tau \otimes s$ so $\nabla_{Y_1}
\nabla_{Y_2} s = i ( (Y_1 .a_2) \tau +a_2 (Y_1. \tau)) s - a_1 a_2 \tau^2 s$.
Using that $ Y_1.a_2 - Y_2.a_1 = [Y_1, Y_2] .t = 0$, it comes that
$$ [\nabla_{Y_1} ,
\nabla_{Y_2} ] s = i ( a_2 (Y_1.\tau) - a_1 ( Y_2 .\tau)) s.$$
Comparing with the previous expression for the curvature, we obtain \eqref{eq:relation_utile}.

We prove the second assertion. Assume $Y_1 =  (a_1,\xi_1)$ is in the kernel of
the tangent map of $j: \Ga \rightarrow \R \times M$, that is $d\tau (Y_1) = 0$ and $\xi_1=0$. Then
\eqref{eq:relation_utile} writes $0 = a_1 d \tau (Y_2)$. If $a_1 \neq 0$, this
implies that $d\tau (Y_2) =0$ for any $Y_2 \in T_{(t_0, x_0)} \Ga$, which contradicts the assumption that $-E$ is a regular value of $\tau$. 

This implies that $j_E$ is an immersion. It is Lagrangian by
\eqref{eq:relation_utile} again because if $Y_1$, $Y_2$ are tangent to $\Ga^E$, then
$d\tau ( Y_1) = d\tau (Y_2) =0$, so $\om (\xi_1,\xi_2) =0$. Finally,
$ \nabla ( e^{it E  } s) = (iE dt + i \tau dt )\otimes  e^{i tE } s = 0$ on
$\Ga^E$. 
\end{proof}

\subsection{Immersed Lagrangian states} 
\label{sec:immers-lagr-stat}

We will adapt the definition of Lagrangian states for immersed manifolds.
Suppose we have a Lagrangian immersion $j:N \rightarrow M$, a flat unitary
section $s$ of $j^*L$ and a formal series $\sum \hbar^\ell b_\ell$ with
coefficients $b_{\ell} \in \Ci ( j^* L')$.

First, for any $y \in N$, we will define a
germ of Lagrangian state at $j(y)$, uniquely defined up to $\bigo (
k^{-\infty})$ as follows. Let us assume temporarily  that there exists an open set $V$ in $M$ such that $j:N \rightarrow V$
is a proper embedding, so that $j(N)$ is a closed submanifold of $V$. Then we
can introduce sections $F : V \rightarrow L$
and $a_{\ell}: V \rightarrow L'$ such that $\con \partial F$ and $\con \partial
a_{\ell}$ vanish to infinite order along $j(N)$, $j^* F = s$ and $j^* a_\ell =
b_{\ell}$ and $|F|<1$ on $V \setminus j(N)$. These sections are not unique but
if $(F', a'_{\ell}, \, \ell \in \N)$ satisfy the same condition, then
for any $N$,
\begin{gather} \label{eq:unique_DAS}
F^k \sum_{\ell =0 }^N k^{-\ell} a_{\ell} = (F')^k
\sum_{\ell=0}^N k^{-\ell} a'_{\ell} + \bigo ( k^{-N-1})
\end{gather}
the $\bigo $ being uniform on any compact set of $V$. This follows on one hand
from the
fact that $|F|$ and $|F'|$ are $<1$ on $V \setminus j(N)$, so that both sides
of \eqref{eq:unique_DAS} are in $\bigo ( k^{-N-1})$ uniformly on any compact set of $V \setminus j(N)$.
On the other hand, the sections $F$, $F'$ and $a_{\ell}$, $a_{\ell}'$ have the
same Taylor expansions along $j(N)$ which implies \eqref{eq:unique_DAS} on a
neighborhood of $j(N)$,  (see \cite[Section 2.2]{C2003} for details).

Back to a general immersion $N \rightarrow M$, for any $y \in N$, by the local normal form for immersions, there exists open neighborhoods $U$ and $V$ of
$y$ and $j(y)$ respectively such that $j(U) \subset V$ and $j$ restricts to a closed embedding from
$U$ into $V$. Then we can introduce the sections $F$ and $a_{\ell}$, $\ell \in
\N$ as above on $V$, which extend the restrictions of $s$ and $b_{\ell}$ to
$U$. This defines the expansion
\begin{gather} \label{eq:expansion_local}
\Psi_{N,k} := F^k \sum_{\ell =0 }^N k^{-\ell}
a_{\ell} 
\end{gather}
on $V$. If we have another set of data $(U',V', F' , a'_{\ell})$, we obtain another
sequence $\Psi_{N,k}' := (F')^k \sum_{\ell =0 }^N k^{-\ell}
a'_{\ell}$ on $V'$.
\begin{lm}  For any $N$, $\Psi_{N,k} = \Psi'_{N,k} + \bigo ( k^{-N-1})$ on a
neighborhood of $j(y)$. 
\end{lm}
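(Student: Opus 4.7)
The plan is to reduce the immersed statement to the embedded uniqueness already handled around \eqref{eq:unique_DAS}: I will pass to a common small neighborhood of $j(y)$ on which both constructions live over literally the same embedded piece of $j(N)$, and then invoke the cited argument verbatim.

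First, I would set $U'' := U \cap U'$, a neighborhood of $y$ in $N$. Since $j|_U : U \to V$ is a closed embedding through $j(y)$, the set $j(U \setminus U'')$ is closed in $V$ and does not contain $j(y)$; similarly $j(U' \setminus U'')$ is closed in $V'$ away from $j(y)$. I can therefore choose an open neighborhood $V'' \subset V \cap V'$ of $j(y)$ small enough that
\[
\Sigma := j(U) \cap V'' = j(U'') \cap V'' = j(U') \cap V''
\]
is a single closed Lagrangian submanifold of $V''$, on which both section data restrict to $s \circ (j|_{U''})^{-1}$ and $b_\ell \circ (j|_{U''})^{-1}$. This is the step that genuinely uses the local-embedding structure of an immersion and, together with the fact that additional sheets of $j(N)$ can only accumulate at $j(y)$ through $U$ or $U'$ already, is the main obstacle of the argument; once $\Sigma$ is in place, everything is embedded.

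Next, on $V''$ both quadruples $(F, a_0, \dots, a_N)$ and $(F', a_0', \dots, a_N')$ satisfy exactly the hypotheses of the embedded case with respect to $\Sigma$: they are smooth with $\con{\partial}$ vanishing to infinite order along $\Sigma$, they have modulus strictly less than $1$ on $V'' \setminus \Sigma$, and they restrict to the same section data on $\Sigma$. In particular $F$ and $F'$ (resp.\ $a_\ell$ and $a_\ell'$) have the same Taylor expansions to all orders along $\Sigma$, because tangential derivatives are determined by the common restriction and transverse $\con{\partial}$-derivatives vanish to infinite order.

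Finally, I would apply the embedded uniqueness statement \eqref{eq:unique_DAS} on $V''$ to obtain
\[
F^k \sum_{\ell = 0}^N k^{-\ell} a_\ell = (F')^k \sum_{\ell=0}^N k^{-\ell} a'_\ell + \bigo(k^{-N-1})
\]
uniformly on compact subsets of $V''$: indeed, away from $\Sigma$ both sides are $\bigo(k^{-\infty})$ since $|F|, |F'| < 1$, while near $\Sigma$ the matching Taylor expansions yield the bound by the same stationary-phase-type estimate as in \cite[Section 2.2]{C2003}. This gives $\Psi_{N,k} = \Psi_{N,k}' + \bigo(k^{-N-1})$ on the neighborhood $V''$ of $j(y)$, which is the claim.
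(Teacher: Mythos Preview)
Your proof is correct and follows essentially the same route as the paper: you shrink to a common neighborhood $V''$ of $j(y)$ on which $j(U)\cap V'' = j(U')\cap V'' = j(U'')\cap V''$, so that both data sets become admissible for the same embedded Lagrangian $\Sigma$, and then invoke \eqref{eq:unique_DAS}. The paper phrases the shrinking slightly differently (choosing $W\subset V$, $W'\subset V'$ with $j(U'') = j(U)\cap W = j(U')\cap W'$ and setting $V'' = W\cap W'$), but the content is identical; your justification that $j(U\setminus U'')$ is closed in $V$ and misses $j(y)$ is a welcome clarification of why such a neighborhood exists.
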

So we have a well defined germ of Lagrangian states at $j(y)$. 
\begin{proof} Choose open sets $W$ and $W'$ of $V$ and $V'$ respectively
such that
$j(U \cap U') =  j(U) \cap W = j(U') \cap W'$. Set $U'' = U \cap U'$ and $ V''
=  W \cap W'$. Then $j$ restricts to an embedding from $U''$ into $V''$ and
$j(U'') = j (U') \cap V''$. So the restriction of $F$, $a_{\ell}$ to $V''$
gives us a new set of data $(U'', V'', F|_{V''}, a_\ell|_{V''})$. The fact
that $j(U'') = j (U') \cap V''$ is used to see that $|F|<1$ on $V'' \setminus
j(U'')$. 
Similarly,
we can restricts $F'$, $a'_{\ell}$ to $V''$ and get $(U'', V'', F'|_{V''},
a_{\ell}'|_{V''})$. The final result follows from our initial remark \eqref{eq:unique_DAS}. 
\end{proof}

Now assume that there exists a compact subset $K$ of $N$ such that for each
$\ell$, $a_{\ell}$ is supported in $K$. Our goal is to construct a Lagrangian
state which, on a neighborhood of each $x\in M$, is equal to the sum of
the local Lagrangian states defined previously for each $y \in j^{-1} (x) \cap
K$. An essential observation is that $j^{-1} (x)$ is discrete in $N$, so $j^{-1} (x) \cap K$ is finite since $K$ is compact.  

\begin{lm} 
  There exists a family $(\Psi_k \in \Hilb_k)$ such that for any $x \in M$ and
  any $N$:
  \begin{enumerate}
    \item if $x \notin j(K)$,  $|\Psi_k|  = \bigo ( k^{-N})$ on a
      neighborhood of $x$,
     \item if $j^{-1} (x) \cap K = \{ y_i , \, i \in I\}$, then $\Psi_k =
       \sum_i \Psi^i_{N,k} + \bigo ( k^{-N})$ on a
       neighborhood of $x$, where each $ \Psi^i_{N,k}$ is defined as in
       \eqref{eq:expansion_local} with $y = y_i$. 
     \end{enumerate}
   \end{lm}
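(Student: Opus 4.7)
The plan is to reduce the immersed case to the embedded case via a partition of unity on $N$, produce finitely many auxiliary Lagrangian states in $\Hilb_k$, and sum them.

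\textbf{Step 1 (Finite cover of $K$).} Using the local normal form for immersions and compactness of $K$, cover $K$ by finitely many open sets $U_1,\dots,U_m \subset N$ such that each $j|_{U_i}$ extends to a proper embedding of a neighborhood of $\overline{U_i}$ onto a closed submanifold of an open set $V_i \subset M$. Pick a partition of unity $\chi_1,\dots,\chi_m \in \Ci(N)$ subordinate to $\{U_i\}$ with $\sum_i \chi_i = 1$ on an open neighborhood of $K$; in particular $\chi_i a_\ell$ is compactly supported in $U_i$.

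\textbf{Step 2 (Local embedded states).} For each $i$, the data $(j(U_i),\, s|_{U_i},\, (\chi_i a_\ell)_\ell)$ is a compactly supported formal symbol on a closed embedded Lagrangian inside $V_i$. The embedded construction recalled in Section \ref{sec:famil-lagr-stat} — build the almost-holomorphic ansatz $F_i^k \sum_\ell k^{-\ell}(\chi_i a_\ell)$ on $V_i$, truncate by a cutoff supported in $V_i$, and apply the Bergman projection onto $\Hilb_k$ — yields $\Psi^i_k \in \Hilb_k$ with the prescribed local Lagrangian asymptotics of symbol $\chi_i a_\ell$ near each point of $j(U_i)$, and with $\Psi^i_k = \bigo(k^{-\infty})$ uniformly off any neighborhood of $j(\mathrm{supp}\,\chi_i)$.

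\textbf{Step 3 (Sum and verification).} Set $\Psi_k := \sum_{i=1}^m \Psi^i_k \in \Hilb_k$, a finite sum. If $x \notin j(K)$, then $x$ lies outside every $j(\mathrm{supp}\,\chi_i)$, so each summand, hence $\Psi_k$, is $\bigo(k^{-\infty})$ on a neighborhood of $x$, giving (1). If instead $x \in j(K)$, the fiber $j^{-1}(x) \cap K = \{y_1, \dots, y_r\}$ is finite (the intersection of the compact set $K$ with the discrete set $j^{-1}(x)$). Choose a neighborhood $W$ of $x$ on which the $r$ local sheets of $j$ through the $y_\alpha$ are embedded and pairwise disjoint. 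By the support property in Step 2, on $W$ each $\Psi^i_k$ contributes only through sheets with $y_\alpha \in U_i$; the total contribution at the sheet through $y_\alpha$ has local symbol $\bigl(\sum_{i :\, y_\alpha \in U_i} \chi_i\bigr)(y_\alpha)\, a_\ell = a_\ell$, and by the uniqueness up to $\bigo(k^{-\infty})$ of Lagrangian states with prescribed symbol (recalled in Section \ref{sec:famil-lagr-stat}), it matches $\Psi^\alpha_{N,k}$ modulo $\bigo(k^{-N-1})$ on $W$. Summing over $\alpha$ yields (2).

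The only real technical point is Step 2: passing from a compactly supported almost-holomorphic ansatz on $V_i$ to a genuine element of $\Hilb_k$ that simultaneously realises the local asymptotics and remains $\bigo(k^{-\infty})$ off the support. This is handled by the standard Boutet de Monvel–Sj\"ostrand description of the Bergman kernel (equivalently, the construction of \cite{C2003}), so no serious obstacle remains.
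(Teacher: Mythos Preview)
Your argument is correct. It differs from the paper's proof mainly in where the partition of unity lives: you cut up the \emph{source} $N$ (splitting the symbol as $\sum_i \chi_i b_\ell$ and producing one global state $\Psi^i_k\in\Hilb_k$ per piece), whereas the paper cuts up the \emph{target} $M$, building for each small $V\subset M$ a local ansatz as a sum over the finitely many sheets $U_i\subset N$ lying over $V$, gluing these ansätze with a partition of unity on $M$, and projecting onto $\Hilb_k$ once at the end via the Kodaira--H\"ormander estimate. Your route is a little more direct for the existence statement, since each $\Psi^i_k$ is already holomorphic and the finite sum lies in $\Hilb_k$ with no further gluing; the paper's route, on the other hand, makes the local description (sum of germs indexed by $j^{-1}(x)\cap K$) completely explicit from the outset, which is exactly what is needed for the verification of property~(2). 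One small imprecision in your Step~3: the relevant microsupport of $\Psi^i_k$ is $j(\operatorname{supp}\chi_i\cap K)$ rather than $j(\operatorname{supp}\chi_i)$, since the original coefficients $b_\ell$ are already supported in $K$; this is what makes property~(1) go through when $x\notin j(K)$.
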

We will call $(\Psi_k)$ a Lagrangian state associated with the Lagrangian immersion $j : N
\rightarrow M$, the flat unitary section $s$ of $j^*L$ and the formal series
$\sum \hbar^\ell b_\ell$ with coefficients in $\Ci ( N, j^* L')$.  $(\Psi_k)$
is unique up to $\bigo ( k^{-\infty})$. But unlike the case of a Lagrangian
submanifold, we can not recover the symbol $\sum \hbar^\ell b_{\ell}$ from
the state by taking the restriction to $j(N)$ because of the possible multiple
points.  
   
   \begin{proof} 
Consider an open set $V$ of $M$ and a finite family $(U_i)_{i \in I}$ of
disjoint open
sets of $N$ such that for any $i \in I$, $j$ restricts to a proper embedding from $U_i$ into $V$
and $K \cap j^{-1} (V) \subset \bigcup U_i$. Then introduce sections $F_i$,
$a_{i, \ell}$ on $V$ as above associated with each submanifold $j(U_i)$.
Consider the sum
\begin{gather} \label{eq:lag_state_im} 
\Bigl(\frac{k}{2\pi} \Bigr)^{\frac{n}{4}} \sum_{i \in I} F_i^k(x) \sum _{\ell=0 }^N
k^{-\ell} a_{\ell}(x) , \qquad x \in V .
\end{gather}
Then for any $x \in V$, each $y \in j^{-1} (x) \cap K$ belongs to one of the
$U_i$, so on a neighborhood of $x$, \eqref{eq:lag_state_im} is equal to the
sum of the Lagrangian state germs associated with the $y \in j^{-1}(x) \cap K$.
So by the previous discussion, the state defined by \eqref{eq:lag_state_im} on
a neighborhood of $x$ does not depend, up to $\bigo ( k^{-\infty})$, on the choice
of $(V, U_i, F_i, a_{i, \ell}, i \in I , \ell \in \N)$. It is not
difficult to prove that any point $x$ of $M$ has an open neighborhood $V$
admitting a family $(U_i)$ as above. Indeed, if we set $I =
j^{-1}(x) \cap K$, then for any $y \in I$, there exists a pair $(U_y \ni y ,V_y)$ such that $j$ restricts to a proper
embedding from $U_y$ into $V_y$. Then we choose for $V$ a sufficiently small
neighborhood of $x$ in $\cap_y V_y$ such that $j^{-1} (V) \cap K \subset \cup_y
U_y$ and we restrict the $U_y$ accordingly.  
So with a partition of unity, we can construct global states $\Psi_k \in \Ci (
M , L^k \otimes L'), k \in \N$, such that for any data $(V, U_i, i \in I)$ as
above, $\Psi_k$ is equal to \eqref{eq:lag_state_im} on $V$ up
to a $\bigo ( k^{-\infty})$, uniform on any compact subset of $V$. Since $\con
\partial \Psi_k$ is in $\bigo ( k^{-\infty})$, we can replace $\Psi_k$ by its
projection onto $\Hilb_k$, which only modifies it by a $\bigo ( k^{-\infty})$ by
Kodaira-H\"ormander estimates \cite{Kod,Hor_dbar}. 
\end{proof}

\subsection{Fourier transform}
Introduce the $\hbar$-Fourier transform and its inverse with parameter $k = \hbar^{-1}$
$$ \mathcal{F}_k(f) (t) = \Bigl(\frac{k}{2 \pi} \Bigr)^{\frac{1}{2}} \int_{\R} e^{-ik tE} f(E) \, dE, \quad
 \mathcal{F}_k^{-1}(g) (E) = \Bigl(\frac{k}{2 \pi} \Bigr)^{\frac{1}{2}}
 \int_{\R} e^{iktE} g (t) \, dt .$$
 We are now ready to state the main result of this section. 
\begin{thm} \label{th:Lagrangian_Fourier}
Let $(\Psi_k \in \Ci ( I , \Hilb_k), \; k \in \N)$ be a Lagrangian state
family associated with $(\Ga, s)$  and such
that the $\Psi_k$ are supported in a compact set of $I$ independent of $k$.
Let $-E$ be a regular value of $\tau$ and $\Ga^E := \tau^{-1}(-E)$, using the notation introduced before and in Equation \eqref{eq:def_gae}.

Then $\mathcal{F}^{-1}_k (\Psi_k)(E)$ is
a Lagrangian state associated with the Lagrangian immersion $j_E: \Ga^{E}
\rightarrow M$, $(t,x) \mapsto x$, the section $s^E \in \Ci ( \Ga^{E}, j_E^*
L)$ given by $ s^E ( t,x) = e^{itE} s (t,x)$  and the principal symbol 
$$ \si^{E} (t,x) = B(t,x)^{-\frac{1}{2}} \si ( t,x), \qquad (t,x) \in \Ga^E $$
where $ \si $ is the principal symbol of $(\Psi_k)$ and $B (t,x)$ is such that
$d \tau \wedge \al = i B (t,x) \, dt \wedge \al$ on
$T_{(t,x)} \Ga$ for
any $\al \in K_x$, the square root $B(t,x)^{1/2}$ having a non negative real part.
\end{thm}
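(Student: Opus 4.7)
The plan is to apply complex stationary phase in $t$ to the oscillatory integral representation of $\mathcal{F}^{-1}_k(\Psi_k)(E)$. Substituting the asymptotic expansion \eqref{eq:def_lagrangian} of $\Psi_k$ and writing $F = e^{i\phi}$ in a local unitary trivialization of $L$ gives, modulo $\bigo(k^{-\infty})$,
\[ \mathcal{F}^{-1}_k(\Psi_k)(E)(x) = \Bigl(\frac{k}{2\pi}\Bigr)^{\frac{1}{2}+\frac{n}{4}} \int_I e^{ik(tE + \phi(t,x))} \sum_{\ell} k^{-\ell} a_\ell(t,x)\, dt, \]
with the integrand compactly supported in $t$ by hypothesis. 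The phase $\psi(t,x) = tE + \phi(t,x)$ satisfies $\operatorname{Im}\psi \geq 0$ with equality precisely on $\Ga$, and its $t$-critical equation is $\partial_t\phi(t,x) = -E$. Since $\partial_t\phi|_\Ga = \tau$ (as established in the proof of Proposition \ref{prop:derive_etat_lag}), the critical points lying on $\Ga$ form exactly $\Ga^E$. For $x$ not lying in $j_E(\Ga^E \cap \op{Supp}\Psi_k)$, nonstationary phase integration by parts in $t$ gives a $\bigo(k^{-\infty})$ contribution; for $x$ near $j_E(\Ga^E)$, the regularity of $-E$ for $\tau$ guarantees that the critical points in $t$ are finite, isolated, and in bijection with $j_E^{-1}(x) \cap \op{Supp}\Psi_k$.

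The crux is to identify the stationary-phase Hessian factor with $B^{-1/2}$. At a critical point $(t_0,x_0) \in \Ga^E$, the Hessian equals $\partial_t^2 \phi(t_0,x_0)$, and I claim
\[ \partial_t^2 \phi(t_0,x_0) = i B(t_0,x_0). \]
Using that $\con{\partial}_x \phi$ vanishes to infinite order along $\Ga$ and $\partial_t \phi|_\Ga = \tau$, I show that for any $v \in T_{(t_0,x_0)}(I\times M)$ decomposed as $v = v' + w$ with $v' \in T_{(t_0,x_0)}\Ga \otimes \C$ and $w \in \{0\}\oplus T^{0,1}_{x_0}M$, one has $v.(\partial_t\phi)(t_0,x_0) = d\tau(v')$: indeed $w.(\partial_t\phi) = \partial_t(w.\phi)$ vanishes to infinite order on $\Ga$, while $v'$ is tangent to $\Ga$ so $v'.(\partial_t\phi) = v'.\tau$. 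Applied to $v = \partial_t$, this gives $\partial_t^2\phi(t_0,x_0) = d\tau(Z_0)$ where $Z_0$ is the unique lift of $\partial_t$ in $T_{(t_0,x_0)}\Ga \otimes \C$ with $Z_0 - \partial_t \in T^{0,1}_{x_0}M$. Evaluating the defining relation $d\tau \wedge \alpha = iB\, dt \wedge \alpha$ on the basis $(Z_0, e_1,\ldots,e_n)$ of $T_{(t_0,x_0)}\Ga \otimes \C$, where $(e_j)$ spans $T_{x_0}\Ga_{t_0}\otimes\C$, and using that $\alpha \in K_{x_0}$ annihilates the $(0,1)$-component of $Z_0$ in $T_{x_0}M \otimes \C$, both sides collapse to scalar multiples of $\alpha(e_1,\ldots,e_n)$, yielding $d\tau(Z_0) = iB(t_0,x_0)$ after choosing $(e_j)$ so that $\alpha(e_1,\ldots,e_n) \neq 0$. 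Hence $-i\partial_t^2\phi = B$ and the stationary-phase factor is $B^{-1/2}$, with the branch of positive real part determined by $\operatorname{Im}\phi \geq 0$.

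Combining the Fourier normalization $(k/2\pi)^{\frac{1}{2}+\frac{n}{4}}$ with the stationary-phase factor $(2\pi/k)^{\frac{1}{2}}$ yields the prefactor $(k/2\pi)^{\frac{n}{4}}$ expected of a Lagrangian state on $M$. The critical value of the phase at $(t_0,x_0) \in \Ga^E$ is $e^{ik(t_0E + \phi(t_0,x_0))} = (e^{it_0 E}s(t_0,x_0))^k = s^E(t_0,x_0)^k$, and $s^E$ is flat on $\Ga^E$ by Proposition \ref{prop:symplectic_Gamma_E}. Using the implicit function theorem to write $t = t_0(x)$ near each branch of $j_E^{-1}(x)$ and carrying the stationary-phase expansion to all orders, the function $x \mapsto t_0(x)E + \phi(t_0(x), x)$ gives an almost-holomorphic extension of $s^E$ whose modulus is $<1$ off $j_E(\Ga^E)$, so that $\mathcal{F}^{-1}_k(\Psi_k)(E)$ fits the template of Section \ref{sec:immers-lagr-stat} for a Lagrangian state associated with $j_E$, flat section $s^E$, and principal symbol $B^{-1/2}\sigma|_{\Ga^E}$. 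The main obstacle is the Hessian identification $\partial_t^2\phi = iB$ in the middle paragraph; this is the only step that directly couples the almost-holomorphic extension machinery with the linear-algebraic definition of $B$, all other steps being standard stationary phase.
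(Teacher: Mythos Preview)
Your proof is correct and follows essentially the same stationary-phase approach as the paper. The only notable difference is in the Hessian identification: the paper observes that on $\Ga$ one has $d\tau = f''_{tt}\,dt + \partial f'_t$ (since $f'_t|_\Ga=\tau$ and $\con\partial f'_t$ vanishes there), and then simply wedges with $\al\in K_x$ so that $\partial f'_t\wedge\al$ dies as an $(n{+}1,0)$-form, giving $f''_{tt}\,dt\wedge\al=d\tau\wedge\al=iB\,dt\wedge\al$ in one line; your route via the lift $Z_0$ of $\partial_t$ and evaluation on a basis $(Z_0,e_1,\ldots,e_n)$ is the same linear algebra unpacked, and equally valid.
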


We already explained that for a non-zero $\al \in K_x$, $dt \wedge \al$ is nonzero on $T_{(t,x)} \Ga$. By the second assertion of Proposition
\ref{prop:symplectic_Gamma_E},  the same argument shows that $d \tau \wedge \al$ is
non zero on $T_{(t,x)} \Ga$ when $(t,x) \in \Ga^E$. This proves that $B(t,x)$ is uniquely determined
and nonzero as well.  

\begin{proof}
Introduce a local unitary frame $u$ of $L$ and write $F (t,x) = e^{ i f(t, x)
} u (x)$, where $F$ is the section appearing in the definition
\eqref{eq:def_lagrangian} of $\Psi_k$.  Then
\begin{gather} \label{eq:int}
\mathcal{F}^{-1}_k(\Psi_k)(E)(x) = \Bigl( \frac{k}{2 \pi}
\Bigr)^{\frac{n}{4} + \frac{1}{2} }  u^k (x) \int_{\R} e^{ik \phi (t,x )} a(t,x,k) \,
  dt   
\end{gather}
with $\phi(t,x) = tE + f(t,x)$. The imaginary part of $\phi$ is nonnegative.
It vanishes when $|F(t,x)| =1$, that is when $(t,x) \in \Ga$. We have
$$ \phi'_t (t,x) = E + f'_t (t,x), \qquad \phi''_{tt} (t,x) = f''_{tt}
(t,x). $$
Here $g \mapsto g'_t$ means differentiation with respect to $t$.

We claim that the function $f'_t $ is an extension of $\tau$ such that $\con \partial f'_t$
vanishes to infinite order along $\Ga$. Indeed, by taking the restriction of $
\nabla F$ to $\Ga$, we obtain that $f'_t = \tau$ on $\Ga$ (see also the argument in the proof of Proposition \ref{prop:derive_etat_lag}). Then since $\con
\partial F $ vanishes to infinite order along $\Ga$, the same holds for $\con \partial f
+ \frac{ \con \partial u}{u} $, and by taking the derivative with respect to $t$, the same
holds for $\con \partial f'_t$. 

So for $(t,x) \in \Ga$, $\phi'_t(t,x) = 0$ if and only if $(t, x) \in \Ga^E$.
So by Lemma 7.7.1 in \cite{Ho}, if $(t_0 , x_0) \notin \Ga^E$, then the
integral in \eqref{eq:int} restricted to a neighborhood of $t_0$ is in $\bigo (
k^{-\infty})$ on a neighborhood of $x_0$. So to estimate \eqref{eq:int} on a
neighborhood of a point $x_0$, it suffices to integrate on a neighborhood of
$j^{-1}_E (x_0)$. Let $V$, $U$ be neighborhoods of $x_0$ and $t_0 \in j^{-1}_E(
x_0)$ respectively such that $W = \Ga^E \cap
( U \times V)$ is a graph $\{ (t(x), x) , \, x \in j_E (W) \cap V \}$. 

Since $f'_t = \tau$ and $\con{\partial} f'_t=0$ on $\Ga$, we have $f''_{tt} dt
+ \partial f'_t = d \tau$ on $\Ga$. Mutiplying by $\al \in \Om^{n,0} (M)$, we
get $ f''_{tt} dt \wedge \al = d\tau \wedge \al$ on $\Ga$. As explained before
the proof, $d \tau \wedge \al$ does not vanish on $T_{t,x} \Ga$, so $f''_{tt} $
does not vanish on
$\Ga^E$ and we can apply the stationary phase lemma for a 
complex valued phase, see \cite[Theorem 2.3]{MeSj} or \cite[Theorem 7.7.12]{Ho}.
This theorem implies that on a neighborhood of $j_E(W)$ 
\begin{gather} \label{eq:local_int}
\Bigl( \frac{k}{2\pi} \Bigr)^{\frac{1}{2}} \int_{U} e^{ik \phi (t,x) }
a(t,x
,k) \, dt = e^{ik \phi_E (x)}  \sum_{\ell =0 }^N k^{-\ell} a_{E, \ell} (x) +
\bigo ( k^{-N-1}) 
\end{gather}
for any $N$, where
$$\phi_E (x) = \phi ( T(x), x), \qquad a_{E,0}(x) = (-i \phi_{tt}''(t,x))^{-\frac{1}{2}}
a_0 ( T(x) ,x),$$
the square root having a non negative real part. 
Here $T: U \rightarrow \C$ is an extension
of $x \rightarrow t(x)$, that is $(T(x),x) \in \Ga^ E$ when $ x\in j_E(W)$. The extension is chosen so that   $ \phi_t' ( T(x), x) =0$, where
$\phi$ itself has been extended almost
  analytically to a neighborhood
  of $\R \times M$  in $\C \times M$. We claim that $F (x) = e^{i \phi_E(x) }
  u(x)$ is adapted to $ (j_E(W), s^E|_W )$. First if $x \in j_E(W)$, then
  $$ e^{i \phi_E(x) } u (x) = e ^{it(x)E +i f(t(x), x) } u(x)=  e^{i t(x) E}
  s(t(x), x) = s^E ( t(x), x)$$
 It remains to show that $\con \partial F $  vanishes to infinite order along
 $j_E(W)$. Assume first that we can choose the section $F$ to be holomorphic, so
 that $\phi_t'$ depends holomorphically on $x$. If furthermore we can extend
 $\phi'_t$ so that it depends holomorphically on $t$, then by the holomorphic
 version of the implicit function theorem $T$ is holomorphic and $F$ is
 holomorphic as well. In general, we only know that $\con \partial F$ vanishes
 to infinite order along $\Ga$ and by adapting the previous argument, we
 conclude that $\con \partial F$ vanishes to infinite order along $j_E( W)$.
 
 With a similar proof, we can also show that the same holds for the coefficients: $\con
 \partial a_{E, \ell} \equiv 0$ along $j_E(W)$ to infinite order.
However, it is actually easier to use the following fact: $F^k \sum
 k^{-\ell} b_{\ell} = \bigo ( k^{-\infty})$ if and only if all the coefficients
 $b_{\ell}$ vanish to infinite order along $j_E(W)$. And here we know that 
 $\con \partial ( F^k
 \sum k^{-\ell} a_{E, \ell} ) = \bigo ( k^{-\infty})$ by differentiating
 \eqref{eq:local_int} under the integral sign.
\end{proof}

\section{Spectral projector}
\label{sect:proj}

Consider a self-adjoint operator $\hat{H}$ acting on a finite dimensional
Hilbert space $\mathcal{E}$. Here it is important that $\hat{H}$ is
time-independent. Introduce a smooth function $f: \R \rightarrow \C$ with smooth
compactly supported Fourier transform $\hat f$. We will work with the unitary
Fourier transform, so
$$ \hat{f} (t) = \frac{1}{\sqrt{2 \pi}} \int_{\R} e^{-itE} f(E) \, dE, \qquad
f (E) = \frac{1}{\sqrt{2 \pi}} \int_{\R} e^{itE} \hat f (t) \, dt$$
The second formula directly gives 
$$f (\hbar^{-1} (E -\la) ) = \frac{1}{\sqrt{2 \pi}} \int_{\R} e^{\frac{itE}{\hbar} }
\, e^{-\frac{it \lambda}{\hbar}}\, \hat f (t) \, dt $$
with $\hbar $ and $\la$ two real parameters.  Doing a spectral decomposition $\hat{H} = \sum \la
\Pi_{\la}$ where the $\la$ and $\Pi_{\la}$ are the eigenvalues and spectral
projectors, and introducing the quantum propagator $U_t = \exp ( - \frac{i
t \hat H}{ \hbar} ) = \sum e^{-\frac{it \lambda}{\hbar}} \Pi_{\la}$, we obtain
\begin{gather*} 
f (\hbar^{-1} (E - \hat{H} ) ) = \frac{1}{\sqrt{2 \pi}} \int_{\R}
e^{\frac{itE}{\hbar} }\,  U_t  \, \hat f  (t) \, dt 
\end{gather*}
We can apply this to our Toeplitz operator $(T_{k} : \Hilb_k \rightarrow
\Hilb_k)$ with quantum propagator $U_{k,t}$, which gives
\begin{gather} \label{eq:spectral_projector}
f\bigl( k ( E - T_{k})\bigr) = k^{-\frac{1}{2}} \mathcal{F}_k^{-1} ( \hat{f} (t)
U_{k,t} ) (E)
\end{gather}
If $E$ is a regular value of the principal symbol $H$ of $T_k$, we
deduce by Theorem \ref{th:Lagrangian_Fourier} that the Schwartz kernel of this
operator is a Lagrangian state. This will be done in Section
\ref{sec:smooth-spectr-proj} and will prove Theorem \ref{th:intro2} of the
introduction. Before that, we will consider the simpler case of a state $(\Psi_{k} \in \Hilb_k)$:
\begin{gather} \label{eq:state_decomposition}
f\bigl( (k ( E - T_{k}) \bigr) \Psi_{k}  = k^{-\frac{1}{2}} \mathcal{F}_k^{-1} ( \hat{f}
(t) \Psi_{k,t} ) (E)
\end{gather}
where $\Psi_{k,t}$ is the solution of $(\frac{1}{ik} \partial_t + T_k )
\Psi_{k,t} =0$ with initial condition $\Psi_{k}$.

\subsection{Lagrangian state spectral decomposition} \label{sec:lagr-state-spectr}

Let $\Ga_0$ be Lagrangian submanifold of $M$ and let $H$ be an autonomous
Hamiltonian with flow $\phi_t$. Set $\Ga_t = \phi_t ( \Ga_0)$ and $\Ga =  \{ ( t, x) \ | \ x\in \Ga_t \}$. Let $E\in \R$
be a regular value of the restriction of $H$ to $\Ga_0$ and define
$$ \Ga^E =\{ ( t,x) \ | \  x \in \Ga_{t} \cap H^{-1} (E) \}.$$ 
This submanifold is the same as the submanifold $\Ga^E$ defined in \eqref{eq:def_gae}
from a (local) flat section  $s_0$ of $L\rightarrow \Ga_0$, because the
corresponding function $\tau$ is the restriction of $-H$ to $\Ga$, see
\eqref{eq:function_tau}. Furthermore
our assumption on $E$ is equivalent to the fact that $-E$ is
regular value of $\tau$. 

The computation of the symbol of $f(k (E - T_k)) \Psi_k$ in terms of the
symbol of $\Psi_{k,0}$ will amount to transform a volume element of $\Ga_0$
into a volume element of $\Ga^E$. Let us explain this.  
We denote by $X$ the Hamiltonian vector field of $H$ and by $X_x^{\perp_{\om}}$ the
symplectic orthogonal of $\R X_x$ in $T_xM$.  For any $(t, x)  \in \Ga^E$,
the Lagrangian space
\begin{gather} \label{eq:def_ltx}
\mathfrak{L}_{(t,x)}  := \R X_x \oplus (T_x
\Ga_t \cap X_x^{\perp_{\om}})
\end{gather}
is the image of $T_{(t,x)} j_E$ with $j_E: \Ga^E \rightarrow M$ the projection $(t,x) \mapsto
x$. Observe that $X_x$ does not
belong to $T_x \Ga_t$ because $E$ is a regular value of $H|_{\Ga_0}$, so it a
regular value of $H|_{\Ga_t}$ as well. 

Assume now $t=0$ and $(0,x) \in \Ga^E$. Choose $\eta \in T_x \Ga_0$ such that $\om (X_x, \eta) = 1$. Then we have
$$ T_x \Ga_0 = \R \eta \oplus (T_x
\Ga_0 \cap X_x^{\perp_{\om}}), \qquad  \mathfrak{L}_{(0,x)} = \R X_x \oplus (T_x
\Ga_0 \cap X_x^{\perp_{\om}}).$$
Starting from $v \in \det
T_x\Ga_0$, we write $v = \eta \wedge w $ with $w \in
\det( (T_x \Ga_0)\cap X_x^{\perp_{\om}})$ and we set $v(0,x) := X_x \wedge w
\in \det \mathfrak{L}_{(0,x)}$. This definition makes sense because $\eta$ is unique
modulo $(T_x
\Ga_0 \cap X_x^{\perp_{\om}})$ so that $w$ is unique. 
More generally, if $t$ is any real and $(0,x) \in \Ga^E$, we set $v(t,x) := (T_x \phi_t)_* v(0,x) \in \det \mathfrak{L}_{t,\phi_t(x)}$, viewing $T_x
\phi_t$ as a map from $\mathfrak{L}_{(0,x)}$ to $\mathfrak{L}_{(t, \phi_t(x))}$. 
Equivalently 
\begin{gather} \label{eq:def_vtx}
v(t,x) = X_{\phi_t(x)}
\wedge (T_x \phi_t)_* w .
\end{gather}

We now define a map $\mathcal{E}_t'(x) : K_x \rightarrow K_{\phi_t(x)}$ for
any $x \in \Ga_0 \cap H^{-1} (E)$ by
\begin{gather} \label{eq:def_eprime}
(\mathcal{E}_t'(x) \al) (v(t,x) ) = -i \al (v) , \qquad \forall v \in \det
T_x \Ga_0.
\end{gather}
We define the function $C'_t$ by the equality $\mathcal{E}_t'(x) = C'_t(x)
\mathcal{T}^K_t (x)$. 

\begin{prop} \label{prop:lagr-state-spectr}
  Let $(\Psi_k)$ be a Lagrangian state of $M$ associated with
  $(\Ga_0, s_0)$ with symbol $\si_0 \in \Ci ( \Ga_0, L')$, $(T_k)$ a
  self-adjoint Toeplitz operator with principal and subprincipal symbol $H$,
  $H^{\s}$, and  $f\in \Ci
  (\R)$ having a smooth compactly supported Fourier transform.

 If $E$ is a regular value of $H|_{\Ga_0}$,  then $\Psi'_k = k^{\frac{1}{2}} f\bigl( (k (
  E - T_{k}) \bigr) \Psi_{k}$ is a Lagrangian state associated with the
  Lagrangian immersion $j_E : \Ga^E \rightarrow M$, the flat section $s^E $ of
  $j_E^* L$ given by $s^E (t,\phi_t(x)) = \mathcal{T}^L_t(x) s_0(x)$ and the symbol
  $\si^E \in \Ci ( j_E^*L')$ defined as
  $$ \si^E (t,\phi_t(x)) = \hat{f} (t) \, C'_t(x)^{\frac{1}{2}} \, e^{\frac{1}{i}
\int_0^t H^{\s} ( \phi_r (x)) \, dr  } \,   \mathcal{T}_t^{L'} (x) \, \si_0 (x)  . $$
where the square root is chosen so as to be continuous and to have a positive
real part at $t=0$.  
\end{prop}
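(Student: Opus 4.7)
The starting point is formula \eqref{eq:state_decomposition}: $\Psi'_k = k^{1/2} f(k(E - T_k))\Psi_k = \mathcal{F}_k^{-1}(\hat{f}(t)\Psi_{k,t})(E)$, where $\Psi_{k,t}$ is the solution of the Schrödinger equation \eqref{eq:Schrod_lagrangian} with initial data $\Psi_k$. By Theorem \ref{th:Lagrangian_State_propagation}, $\Psi_{k,t}$ is a Lagrangian state family associated with the pair $(\Gamma, s)$, where $\Gamma = \{(t, \phi_t(x)) : t \in \R,\, x \in \Gamma_0\}$ and $s(t, \phi_t(x)) = \phi^L_t(x)\, s_0(x)$; its principal symbol is given explicitly by Proposition \ref{prop:solution_transport_equation}. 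Since $\hat{f}$ has compact support, the family $\hat{f}(t)\Psi_{k,t}$ is supported in a fixed compact subset of $\R$, so Theorem \ref{th:Lagrangian_Fourier} applies.

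The hypotheses and basic output of Theorem \ref{th:Lagrangian_Fourier} match the statement cleanly. By \eqref{eq:function_tau}, $\tau(t, \phi_t(x)) = -H(\phi_t(x)) = -H(x)$ since $H$ is autonomous; hence $\tau^{-1}(-E)$ coincides with $\Gamma^E$, and $-E$ is a regular value of $\tau$ exactly when $E$ is a regular value of $H|_{\Gamma_0}$. The flat section $s^E(t,x) = e^{itE} s(t,x)$ prescribed by the theorem reduces on $\Gamma^E$ to $\mathcal{T}^L_t(x)\, s_0(x)$, because the phase $e^{\frac{1}{i}\int_0^t H(\phi_r(x))\,dr}$ in the definition \eqref{eq:prequantum_lift} of $\phi^L_t$ equals $e^{-itE}$ along the energy level, cancelling the $e^{itE}$ factor.

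It then remains to identify the principal symbol, and this is where the main work lies. Combining Theorem \ref{th:Lagrangian_Fourier} with Proposition \ref{prop:solution_transport_equation} gives
\begin{equation*}
  \sigma^E(t,\phi_t(x)) = \hat{f}(t)\, B(t,\phi_t(x))^{-1/2}\, C_t(x)^{1/2}\, e^{\frac{1}{i}\int_0^t H^{\s}(\phi_r(x))\,dr}\, \mathcal{T}^{L'}_t(x)\, \sigma_0(x),
\end{equation*}
so the proposition reduces to the geometric identity $\mathcal{E}'_t(x) = B(t, \phi_t(x))^{-1}\, \mathcal{E}_t(x)$, together with compatibility of the chosen branches of the square roots. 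To prove this identity, I would use that $X_x \notin T_x\Gamma_0$ (since $E$ is regular for $H|_{\Gamma_0}$) to pick $\eta \in T_x\Gamma_0$ with $\omega(X_x, \eta) = 1$ and a basis $w$ of $T_x\Gamma_0 \cap X_x^{\perp_\omega}$, so that $v = \eta \wedge w$ generates $\det T_x\Gamma_0$ and $v(0,x) = X_x \wedge w$ in the notation of \eqref{eq:def_vtx}. Evaluating $d\tau \wedge \alpha = iB(0,x)\, dt \wedge \alpha$ on the $(n+1)$-vector $Y \wedge (0,\eta) \wedge (0,w)$, using $d\tau(Y) = -X.H = 0$, $d\tau(0,\eta) = -dH(\eta) = \omega(X_x, \eta) = 1$, and $d\tau(0, w_j) = 0$ for each $w_j \in X_x^{\perp_\omega}$, one obtains $\alpha(v(0,x)) = -iB(0,x)\,\alpha(v)$ for every $\alpha \in K_x$. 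Comparing with \eqref{eq:def_eprime} yields $\mathcal{E}'_0(x) = B(0,x)^{-1}\,\op{id}_{K_x}$. Pushing the same argument forward by $T_x\phi_t$, which preserves $\omega$ and hence the symplectic relations between $X$, $\eta$ and $w$, gives $\mathcal{E}'_t = B(t,\phi_t(x))^{-1}\mathcal{E}_t$ via \eqref{eq:def_E_transport} and \eqref{eq:def_eprime}. Finally, at $t=0$ one has $C_0 \equiv 1$, so $B(0,\cdot)^{-1/2} C_0^{1/2} = B(0,\cdot)^{-1/2}$ has non-negative real part by the branch convention of Theorem \ref{th:Lagrangian_Fourier}, matching the prescribed normalization of $C'^{1/2}$; continuity propagates the equality to all $t$. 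The main obstacle is the geometric identity above, but once the right decomposition $v = \eta \wedge w$ is adopted, it reduces to a direct calculation with $d\tau \wedge \alpha$.
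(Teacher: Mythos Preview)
Your proof is correct and follows essentially the same route as the paper: express $\Psi'_k$ via \eqref{eq:state_decomposition}, apply Theorem \ref{th:Lagrangian_State_propagation} and then Theorem \ref{th:Lagrangian_Fourier}, and reduce the symbol computation to the identity $\mathcal{E}'_t(x) = B(t,\phi_t(x))^{-1}\mathcal{E}_t(x)$ by evaluating $d\tau \wedge \alpha = iB\, dt \wedge \alpha$ on $(1,X_x)\wedge(0,\eta)\wedge(0,w)$ and pushing forward by $T_x\phi_t$. The only point you leave slightly implicit is that the real part of $B(0,x)^{-1/2}$ is strictly positive rather than merely non-negative; the paper secures this via Remark \ref{rmk:formula_B}, which shows $\operatorname{Re} B(0,x) = \|X_1\|^2 > 0$.
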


\begin{proof} 
The solution of the Schr\"odinger equation with initial condition
$\Psi_k$ is described as a Lagrangian state associated with $( \Ga, s(t,x) =
\phi_t^L(x) s_0(x))$ in Theorem \ref{th:Lagrangian_State_propagation}. Then
$\Psi'_k$ is the $k$-Fourier transform of this solution \eqref{eq:state_decomposition},
so by Theorem \ref{th:Lagrangian_Fourier}, it is a Lagrangian state associated with the immersion $j_E: \Ga^E
\rightarrow M$ and the section $s^E(t,x) = e^{itE} s(t,x) = \mathcal{T}_t^L (x)
s_0(x)$ because for an autonomous Hamiltonian, $\phi^L_t = e^{-it
  H}\mathcal{T}_t^L$, see \eqref{eq:prequantum_lift}. 

It remains to check the formula for the principal symbol. By Proposition
\ref{prop:solution_transport_equation}, we have to prove that $C'_t(x) =
\frac{C_t(x)}{B(t,\phi_t(x))}$, that is $\mathcal{E}_t'(x)=\frac{
 \mathcal{E}_t(x)}{B(t,\phi_t(x))}$, with $B$ the function of Theorem
\ref{th:Lagrangian_Fourier}. Comparing the definitions
\eqref{eq:def_E_transport} and \eqref{eq:def_eprime} of $\mathcal{E}_t(x)$
and $\mathcal{E}'_t(x)$, we have to show that for any $\beta \in
K_{\phi_t(x)}$ and $v \in \det(T_x \Gamma_0)$,
\begin{gather} \label{eq:buut}
B(t,\phi_t(x)) = \frac{i \beta (v(t,x))}{ \be( (T_x \phi_t)_* v)}
\end{gather}
where $v(t,x)$ is as in Equation \eqref{eq:def_vtx}. Let us first explain the proof at $t=0$. Recall that $v(0,x)  =  X_x \wedge w$ and
$v =  \eta \wedge w$. Now  
$B$ is defined by the relation $d\tau \wedge \be = iB \, dt \wedge \be$ on
$\Ga$ for every $\beta \in K$. We have 
$$ T_{(0,x)} \Ga = \R ( 1, X_x) \oplus \R ( 0, \eta) \oplus \{ ( 0,\xi), \, \xi
\in (T_x \Ga_0) \cap X_x^{\perp_{\om}} \}$$
and $d\tau ( 1, X_x) =0 $ so that $ d\tau ( 0, \eta ) =1 $ and $d \tau ( 0, \xi) =0 $ for any $\xi \in T_x
\Ga_0 \cap X_x^{\perp_{\om}}$ by \eqref{eq:relation_utile}. So evaluating the
relation $ d\tau
\wedge \be = iB \, dt \wedge
\be $ on $(1, X_x) \wedge ( 0, \eta) \wedge (0, \xi_2) \wedge \ldots \wedge
(0,\xi_n)$ where $w = \xi_2 \wedge \ldots \wedge \xi_n$, we get
\begin{gather} \label{eq:unedeplus}
-\be ( X_x \wedge w) =  i B(0,x) \be ( \eta \wedge w ),
\end{gather} 
which gives \eqref{eq:buut}.
The proof for $t \neq 0$ is exactly the same where all the symplectic data $X_x$,
$\eta$, $w$, $\Ga_0$ are replaced by their image under $\phi_t$.

The last point is the determination of the square root: we have $(C'_t(x))^{1/2} =
C_t(x)^{1/2}/ B(t, \phi_t(x))^{1/2}$, with $C_0 (x)^{1/2} =1$ and
the square root $B(t,x)^{1/2}$ has a non negative real part by
Theorem \ref{th:Lagrangian_Fourier}. It is even positive as explained in
Remark \ref{rmk:formula_B}.  
\end{proof}

\begin{rmk}\label{rmk:formula_B} The quantity $C_0'(x) = B(0,x)^{-1}$ can be computed explicitly
  as follows. For $x \in H^{-1} (E) \cap \Ga_0$,
\begin{gather} \label{eq:formula_B}
  B(0,x) = \| X_1 \|^2 + i \om ( X_1, X_2) 
\end{gather}
where $X_x = X_1 + X_2$ with $X_1 \in j_x (T_x \Ga_0)$ and $X_2 \in T_x
  \Ga_0$. Recall that $X_x \notin T_x \Gamma_0$, so $\| X_1 \|^2 \neq 0$.
\end{rmk}
\begin{proof}[Proof of \eqref{eq:formula_B}] We set $\eta = \| X_1\|^{-2}
  j_x X_1$ and compute $B(0,x)$ from \eqref{eq:unedeplus}. On the one hand, $\be$ being
  a $(n,0)$ form, $\be (X_1 \wedge w) = -i \be (jX_1 \wedge w)= - i \|X_1\|^2
  \be ( \eta \wedge w)$. On the other hand, $X_2 = \om ( X_1 , X_2) \eta $ plus a
  linear combination of the $\xi_i$, so $X_2 \wedge w = \om (X_1, X_2) \eta
  \wedge w$. Gathering these equalities we get
  $$ \be (X_x \wedge w ) = \left( \om (X_1, X_2) - i \|X_1 \|^2 \right)   \be ( \eta \wedge
  \om)$$ and the conclusion follows. 
\end{proof}

\subsection{Smoothed spectral projector} \label{sec:smooth-spectr-proj}

Recall that $(T_{k})$ is a self-adjoint
  Toeplitz operator with principal symbol $H$ and subprincipal symbol $H^{\s}$ and that $f\in
  \Ci (\R)$ has a smooth compactly supported Fourier transform.

\begin{thm} \label{th:projector}
  Let  $E$ be a
  regular value of $H$. 
Then the Schwartz
  kernel of $f\bigl( k ( E - T_{k})\bigr)$ is a Lagrangian state
  associated with the Lagrangian immersion $j_E: \Ga^E \rightarrow M^2$, the
  flat section $s^E \in \Ci ( j^* \Ga^E)$ and the symbol $\si^E \in \Ci (
  j_E^* L')$ given by $\Ga^{E} = \R \times H^{-1} (E)$, $j_E(t,x) =
  (\phi_t(x), x)$, $s^E ( t,x) = \mathcal{T}_t^L (x)$
and $$  \si_E ( t,x) = \hat{f}(t) \bigl[ \rho_t'(x) \bigr]^{\frac{1}{2}} e^{\frac{1}{i} \int_0^t H^{\s} (\phi_r(x)) \;
    dr } \mathcal{T}_t^{L'} (x)
  $$
  where the function $\rho'_t(x)$ is defined below.
\end{thm}
Recall from the introduction the decomposition in symplectic
subspaces $T_xM = F_x \oplus G_x$ where $F_x = \op{Vect} ( X_x, j_x X_x)$ and
$G_x = F_x^{\perp_{\om}}$. $F_x$ and $G_x$ are both preserved by
$j_x$ and we denote by $K(F_x)$, $K(G_x)$ their canonical lines.
We define 
\begin{gather} \label{eq:phiF} 
\Phi_F: K(F_x) \rightarrow K(F_{\phi_t(x)}), \qquad \Phi_F ( \la_x ) = 2
\| X_x \|^{-2} \la_{\phi_t(x)} 
\end{gather}
where $\la_x \in K(F_x)$
is normalised by $\la_x (X_x) =1$. Furthermore $\Phi_G$ is the map $K(G_x)
\rightarrow K(G_{\phi_t(x)})$ such that 
\begin{gather} \label{eq:def_phiG}
  \Phi_G ( \al ) (\psi u ) = \al (u ), \qquad \forall \al \in K(G_x), \, \forall u \in
  \wedge^n T_x M
\end{gather}
where $\psi$ is the symplectic map $G_x \rightarrow G_{\phi_t(x)}$ induced
by $T_x \phi_t$ and  the isomorphism $G_x \simeq T_x H^{-1} (E)/ \R X_x$.

Then we set $  \mathcal{D}'_t(x):= \Phi_F \otimes \Phi_G :K_x \rightarrow
K_{\phi_t(x)} $ and we denote by $\rho'_t (x)$  the complex
number such that $${\mathcal{D}_t' (x)} = \rho'_t(x)  \mathcal{T}^K_{t}
(x).$$
We denote by $\bigl[ \rho'_t(x) \bigr]^{\frac{1}{2}}$ the continuous square root equal
to $\sqrt{2} \| X_x \|^{-1}$ at $t=0$.

\begin{proof} This is a particular case of Proposition
  \ref{prop:lagr-state-spectr} just as Theorem \ref{th:quantum_propagator} on the
  quantum propagator was a particular case of Theorem
  \ref{th:Lagrangian_State_propagation}. Let us compute the coefficient
  $\mathcal{E}'_t(x) ( \op{id}_{K_x} )$. We first describe the image \eqref{eq:def_ltx} of $T_{(t,x)} j_E$:
  $$ \mathfrak{L}_{t,x} = \R (X_x, 0 )  \oplus \{  (T_x \phi_t (\xi), \xi) ,\, \xi \in T_x
  H^{-1} (E) \} $$
  and its volume \eqref{eq:def_vtx}. Set $\eta = \| X_x\|^{-2} j_x X_x$ so
  that $(X_x, \eta)$ is a symplectic basis of $F_x$. Let $(\xi_i)$ be a
  symplectic basis of $G_x$. Then if the volume of $\op{diag} T_xM$ is $v = v_F
  \wedge v_G$ with
  $$v_F = (X_x,
  X_x ) \wedge (\eta, \eta), \qquad v_G =(\xi_1, \xi_1) \wedge \ldots \wedge
  (\xi_{m} , \xi_{m}), $$
  then we have 
$ v(0,x) =  - (X_x, 0 ) \wedge (X_x, X_x )  \wedge (\xi_1, \xi_1) \wedge \ldots \wedge
(\xi_{m} , \xi_{m}) $
so that 
\begin{xalignat*}{2}
v(t,x) &  = - ( X_{\phi_t(x)}, 0 ) \wedge ( X_{\phi_t(x)} , X_x ) \wedge ( T_x \phi_t ( \xi_1),\xi_1)
\wedge \ldots \wedge (T_x \phi_t (\xi_{m}), \xi_{m}) \\
& = - ( X_{\phi_t(x)}, 0 ) \wedge (0,X_x) \wedge ( \psi (\xi_1), \xi_1) \wedge
\ldots \wedge (\psi (\xi_{m}), \xi_m)
\end{xalignat*}
because  $T_x \phi_t (\xi) = \psi (\xi)$ modulo $\R X_{\phi_t(x)}$. 
Then $\mathcal{E}'_t(x) ( \op{id}_{K_x} ) = \Phi_F \otimes \Phi_G$ where
$\Phi_F: K(F_x)  \rightarrow K(F_{\phi_t(x)})$ is
such that 
\begin{gather} \label{eq:FFF}
 \bigl\langle \Phi_F  , (X_{\phi_t(x)}, 0) \wedge (0, X_{x}) \bigr\rangle = i \langle
\op{id}_{K(F_x)}, v_F \rangle, 
\end{gather}
and $\Phi_G: K(G_x)  \rightarrow K(G_{\phi_t(x)})$ is such that 
\begin{gather} \label{eq:GGG}
  \langle \Phi_G , (\psi( \xi_1), \xi_1)
\wedge \ldots \wedge ( \psi (\xi_m), \xi_m) \rangle = \langle
\op{id}_{K(G_x)}, v_G \rangle, 
\end{gather}
Here the pairings are based on the identifications $\op{Mor}
(K(S) , K(S') ) \simeq K(S') \otimes \con{K(S)} \simeq K(S' \oplus \con{S})$.
Now $\Phi_G$ is the application satisfying \eqref{eq:def_phiG} by Lemma
\ref{lem:toutestla}. And $\Phi_F$ is the application \eqref{eq:phiF} by a
straightforward computation. 
\end{proof}

\section{Proof of theorem \ref{th:gamma}} 
\label{sec:proof-theo}

We choose complex normal coordinates $(z_i)$ of $M$ centered at $x_0 \in M$.
So $G_{ij} (x_0) = \delta_{ij}$ and $\partial_{z_i} G_{jk}  (x_0) =
\partial_{\con{z}_i} G_{jk} (x_0) =0$. We may assume that $T_{x_0} \Ga_{t_0} $
is spanned by the vectors $\partial_{z_i} + \partial_{\con{z}_i}$, $i=1, \ldots, n$.
Recall that $Y$ is the vector field $(\partial_t, X_t)$ of $I \times M$ where
$X_t$ is the Hamiltonian vector field of $H_t$. Since
$\om = i \sum_{j,k} G_{jk} dz^j \wedge d \con{z}^k$, we have
\begin{gather} \label{eq:champ_hamiltonien_complex}
X_t = i \sum_{j,k} \bigl( - G^{jk} H_{z_j} \partial_{\con{z}_k} +  G^{jk}
H_{\con{z}_k} \partial_{z_j} \bigr)
\end{gather}
where we use the notation $ H_{z_j}=  \partial_{z_j} H_t$ and
$H_{\con{z}_k} = \partial_{\con{z}_k} H_t$ (and below we will use similar notation for higher order derivatives).
As explained before the statement of Theorem \ref{th:gamma}, we have two derivatives $\nabla_Y $ and $\mathcal{L}_Y$
acting on $(\C_I \boxtimes K)|_{\Ga}$ in the same direction $Y$, so $\theta := \frac{1}{i}
(\mathcal{L}_Y - \nabla_Y)$ is a function in $\Ci ( \Ga)$. 

\begin{prop}\label{prop:delta}
  $\theta (x_0) =  \sum_j\bigl( H_{z_j\con{z}_j} ( x_0)
  +  H_{\con{z}_j\con{z}_j} ( x_0) \bigr) $ 
\end{prop}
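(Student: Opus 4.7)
The plan is to compute $\mathcal{L}_Y \wt{\al}$ and $\nabla_Y \wt{\al}$ at $(t_0, x_0)$ on a convenient reference section of $K_\Ga$ and read $\theta(x_0)$ off from $i\theta = \mathcal{L}_Y - \nabla_Y$. I take $\al_0 = dz_1 \wedge \cdots \wedge dz_n$ as a local holomorphic frame of $K$ and let $\wt{\al} \in \Ci(\Ga, K_\Ga)$ be the restriction of $1 \boxtimes \al_0$ to $\Ga$; under the isomorphism \eqref{eq:iso_K_det_Gamma}, $\wt{\al}$ corresponds to the $(n+1)$-form $j^*(dt \wedge \al_0)$ on $\Ga$.

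First I check $\nabla_Y \wt{\al}(t_0, x_0) = 0$: on the holomorphic frame $\al_0$ the Chern connection one-form of $K$ equals $-\partial \log \det G_{jk}$, which vanishes at $x_0$ thanks to $G_{jk}(x_0) = \delta_{jk}$ and $\partial_{z_i} G_{jk}(x_0) = 0$. So it remains to compute $\mathcal{L}_Y(dt \wedge \al_0)$ and pair with a basis of $T_{(t_0, x_0)}\Ga$. Since $\iota_Y dt = 1$ we get $\mathcal{L}_Y dt = 0$, and since $\al_0$ is $t$-independent with $d\al_0 = 0$, Cartan's formula yields $\mathcal{L}_Y(dt \wedge \al_0) = dt \wedge d(\iota_{X_t}\al_0)$. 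Only the $\partial_{z_j}$ components of \eqref{eq:champ_hamiltonien_complex} contribute to the contraction, giving
$$\iota_{X_t} \al_0 = i \sum_{j, k} G^{jk} H_{\con{z}_k} (-1)^{j-1} \, dz_1 \wedge \cdots \widehat{dz_j} \cdots \wedge dz_n.$$

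Taking $d$ at $x_0$, where $G^{jk}(x_0) = \delta^{jk}$ with all first derivatives vanishing, only derivatives of $H_t$ survive, and the result splits as an $(n,0)$ piece $i \sum_j H_{z_j \con{z}_j}(x_0) \, \al_0$ plus an $(n-1,1)$ piece $i \sum_{j,l} H_{\con{z}_j \con{z}_l}(x_0) \, \eta^{(j)}_l$, where $\eta^{(j)}_l$ is $\al_0$ with $dz_j$ replaced by $d\con{z}_l$ in position $j$. I then evaluate on the basis $(Y(t_0, x_0), (0, e_1), \ldots, (0, e_n))$ of $T_{(t_0, x_0)}\Ga$ with $e_j = \partial_{z_j} + \partial_{\con{z}_j}$ spanning $T_{x_0}\Ga_{t_0}$: the $dt$ factor contracts against $Y$ to give $1$, leaving $\mathcal{L}_{X_t}\al_0(e_1, \ldots, e_n)$, while $\wt{\al}$ itself normalises to $\al_0(e_1, \ldots, e_n) = 1$. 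Using $dz^i(e_j) = d\con{z}^i(e_j) = \delta^i_j$, the key combinatorial check is $\eta^{(j)}_l(e_1, \ldots, e_n) = \delta_{jl}$: the matrix of pairings is the identity with row $j$ replaced by the $l$-th standard vector, of determinant $\delta_{jl}$. Summing the two contributions yields $i\theta(x_0) = i \sum_j(H_{z_j \con{z}_j} + H_{\con{z}_j \con{z}_j})(x_0)$, as desired. The main obstacle is the combinatorial sign bookkeeping in this last step: it is at first counterintuitive that the $(n-1,1)$ piece contributes on a basis taken from the Lagrangian $\Ga_{t_0}$, and one must carefully track the signs from $\widehat{dz_j}$ and from moving $d\con{z}_l$ past the remaining $dz_i$'s.
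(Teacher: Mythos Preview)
Your proof is correct and follows essentially the same approach as the paper's: both take $\al_0 = dz_1\wedge\cdots\wedge dz_n$, observe that $\nabla_Y\al_0$ vanishes at $x_0$ because the Chern connection form $-\partial\log\det G$ vanishes in normal coordinates, and then compute $\mathcal{L}_Y(dt\wedge\al_0)$ and restrict to $\Ga$ using the assumption that $T_{x_0}\Ga_{t_0}$ is spanned by the $\partial_{z_j}+\partial_{\con z_j}$. The only stylistic difference is that where you contract first via Cartan's formula and then evaluate on the explicit basis $(Y,(0,e_1),\ldots,(0,e_n))$, the paper instead computes $\mathcal{L}_{X_t}dz_j$ directly and then uses the cleaner observation $j^*(dt\wedge d\con z_k)=j^*(dt\wedge dz_k)$ at $x_0$; this shortcut replaces your determinant argument for $\eta^{(j)}_l(e_1,\ldots,e_n)=\delta_{jl}$ in one line.
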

\begin{proof}
Let $\al = dz_1 \wedge \ldots \wedge dz_n$. First we have the section $1 \boxtimes \al$
of $(\C_I \boxtimes K)|_{\Ga}$ and we compute its covariant derivative with
respect to $Y$. We claim that this derivative vanishes at $x_0$. This follows from
the fact that $|\al|^{-2} = \det G_{ij}$, so the
Chern connection of $K$ (given near $x_0$ by the one-form $\frac{\partial(|\alpha|^2)}{|\alpha|^2}$) is zero at $x_0$ because the coordinates are normal
at $x_0$.
Second we have to compute the Lie derivative with respect to $Y$
of $j^* (dt \wedge \al)$ with $j$ the embedding $\Ga \rightarrow I \times M$.
We have $\mathcal{L}_Y j^* ( dt \wedge \al) = j^* ( \mathcal{L}_Y ( dt \wedge
\al))$. Furthermore $\mathcal{L}_{\partial_t} dt = \mathcal{L}_{\partial_t}
dz_i = 0$ and by \eqref{eq:champ_hamiltonien_complex}, we have 
$$ \mathcal{L}_{X_t}d z_j = d (X_t. z_j) = i  \sum_k (H_{\con{z}_j z_k } dz_k +
H_{\con{z}_j \con{z}_k } d\con{z}_k) $$
at $x_0$ because the coordinates are normal. Furthermore $j^* (dt \wedge
d\con{z}_k) = j^* ( dt \wedge dz_k)$ at $x_0$ by the assumption on
$T_{x_0} \Ga_{t_0}$. Collecting all these informations, we deduce that 
$$ \mathcal{L}_Y ( j^*(dt \wedge \al) ) = i \sum_j \bigl( H_{z_j\con{z}_j} ( x_0)
+  H_{\con{z}_j\con{z}_j} ( x_0) \bigr)\; j^*(dt \wedge \al) $$
at $x_0$. The conclusion follows.  
\end{proof}

Introduce the Szeg\"o projector $\Pi_k$ which is the orthogonal projector of
$\Ci ( M , L^k \otimes L')$ onto $\Hilb_k$. 
\begin{prop} \label{prop:Lag_state_symbol_calculus}
  For any Lagrangian state $(\Psi_k)$ associated with $\Ga_{t_0}$
  with symbol $\si$ and any function $f \in \Ci ( M)$,
  $\Pi_k (f \Psi_k)$ is a Lagrangian state with symbol
\begin{gather} 
  f|_{\Ga_{t_0}} \, \si + \hbar \bigl( \tfrac{1}{i} \nabla^{L'}_{U} + 
  \square f \bigr) \, \si + \bigo ( \hbar^2) 
\end{gather}
where $U$ is the vector field of $\Ga_{t_0}$ such that $U(x) = X_f (x) \mod
  T_x ^{0,1}M$, $X_f$ being the Hamiltonian vector field of $f$,  and $\square
  f = \sum_j (f_{z_j\con{z}_j} + \tfrac{1}{2} f_{\con{z}_j \con{z}_j})$ at $x_0$. 
\end{prop}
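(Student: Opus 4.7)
The plan is to exploit the identity $\Pi_k(f\Psi_k) = T_k(f)\Psi_k$ and then invoke the action-of-Toeplitz-operator statement established at the end of Section \ref{sec:famil-lagr-stat}: the result is a Lagrangian state whose full symbol is $\sum \hbar^{m} Q_m \sigma$, with $Q_0$ the multiplication by $f|_{\Gamma_{t_0}}$ and $Q_1$ some first-order differential operator on $\Ci(\Gamma_{t_0}, L')$. This immediately takes care of the leading term. The substance of the statement lies in identifying $Q_1$ as $\frac{1}{i}\nabla^{L'}_U + \square f$.

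I would first pin down the derivation part of $Q_1$ via the commutator trick already used in the proof of Proposition \ref{prop:rho_transport}. For any $g \in \Ci(M)$, the operator identity $ik[T_k(f), T_k(g)] = T_k(\{f,g\}) + \bigo(k^{-1})$ applied to an arbitrary Lagrangian state yields, at the level of principal symbols, $[Q_1^{f}, g|_{\Gamma_{t_0}}] = \frac{1}{i}\{f,g\}|_{\Gamma_{t_0}}$. Since this holds for every $g$, the operator $iQ_1^{f}$ must be a derivation along a vector field of $\Gamma_{t_0}$ representing $X_f$. Because the amplitudes of a Lagrangian state are only constrained modulo sections whose $\con\partial$ vanishes to infinite order along $\Gamma_{t_0}$, the relevant lift is the projection $U$ of $X_f$ onto $T\Gamma_{t_0}$ parallel to $T^{0,1}M$. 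This forces $Q_1^{f} = \frac{1}{i}\nabla^{L'}_U + m(f)$ for some scalar function $m(f) \in \Ci(\Gamma_{t_0})$ depending linearly on $f$ through its $2$-jet along $\Gamma_{t_0}$.

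To evaluate $m(f)$ at $x_0$, I would perform a direct stationary phase analysis in the normal complex coordinates fixed in the statement. Writing $\Psi_k(y) = (k/2\pi)^{n/4} F^k(y) a(y) + \bigo(k^{-1})$ on a neighborhood of $x_0$ with $\con\partial F$ and $\con\partial a$ flat along $\Gamma_{t_0}$, and using the Boutet de Monvel--Sj\"ostrand expansion of the Szeg\"o kernel in the form $\Pi_k(x,y) \sim (k/2\pi)^n E^k(x,y)(1 + k^{-1} b_1(x,y) + \ldots)$, the integral defining $\Pi_k(f\Psi_k)(x_0)$ is evaluated by a complex stationary phase at $y=x_0$. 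The leading order recovers $f(x_0)\sigma(x_0)$, and the $\bigo(k^{-1})$ correction splits into two contributions: one from the subleading Szeg\"o coefficient $b_1$, which produces the term $\frac{1}{2}\sum_j f_{z_j\con{z}_j}(x_0)$ (this is the $\frac{1}{2}\Delta f$ subprincipal symbol of $T_k(f)$ in the paper's convention), and one from the second-order Taylor expansion of $f$ at $x_0$ paired with the inverse Hessian of the phase along the directions transverse to $\Gamma_{t_0}$. The latter produces the additional $\frac{1}{2}\sum_j f_{\con{z}_j\con{z}_j}(x_0)$, and it arises precisely because $T_{x_0}\Gamma_{t_0}$ is spanned by the vectors $\partial_{z_j} + \partial_{\con{z}_j}$, so that the quadratic Gaussian in the conormal direction mixes holomorphic and antiholomorphic indices and thereby picks up the antiholomorphic Hessian of $f$.

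The main obstacle will be this last step: carrying out the stationary phase to subleading order while simultaneously tracking the contributions of $b_1$, of the Taylor expansion of $f$, of the almost-holomorphic extension of $F$ (which is governed by the symplectic potential in normal coordinates), and of the variation of $a$ along $\Gamma_{t_0}$, in such a way that the derivation-type terms reassemble into $\frac{1}{i}\nabla^{L'}_U \sigma$ (in agreement with Step~2) and the pointwise terms collapse into the compact expression $\square f = \sum_j ( f_{z_j\con{z}_j} + \frac{1}{2} f_{\con{z}_j\con{z}_j})$ at $x_0$. The choice of normal coordinates, together with the particular form assumed for $T_{x_0}\Gamma_{t_0}$, eliminates Christoffel-type corrections coming from the metric and from the Chern connection and keeps this accounting tractable.
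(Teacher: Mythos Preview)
Your commutator argument to identify the derivation part of $Q_1$ is a clean route and mirrors the argument of Proposition \ref{prop:rho_transport}; the paper does not use it here, opting instead to extract both the derivation term and the zeroth-order term from a single stationary-phase computation. The paper's device for sidestepping the subleading Szeg\"o coefficient and the other ``universal'' corrections is to observe that any term proportional to $f(x_0)\si(x_0)$ must match the case $f=1$, and hence need not be computed at all.

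Your attribution of the two pieces of $\square f$, however, is where the plan breaks. The subleading Szeg\"o coefficient $b_1$ does \emph{not} produce $\tfrac{1}{2}\sum_j f_{z_j\con z_j}$: at the stationary point its contribution is $b_1(x_0,x_0)\,f(x_0)\,a_0(x_0)$, which carries no derivative of $f$ and is absorbed into the constant fixed by $f=1$. More tellingly, your proposed split adds up to $\tfrac{1}{2}\sum_j f_{z_j\con z_j} + \tfrac{1}{2}\sum_j f_{\con z_j\con z_j}$, which is \emph{not} $\square f = \sum_j f_{z_j\con z_j} + \tfrac{1}{2}\sum_j f_{\con z_j\con z_j}$. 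What actually happens is that the quadratic part of the phase in the chosen normal coordinates is $|z|^2 - \tfrac{1}{2}\sum_j z_j^2$, whose associated stationary-phase Laplacian is $P = \sum_j\bigl(\partial_{z_j}\partial_{\con z_j} + \tfrac{1}{2}\partial_{\con z_j}\partial_{\con z_j}\bigr)$; the \emph{entire} $\square f$ comes from the single term $(Pf)\,a_0$, while the cross term $\sum_j(\partial_{\con z_j}f)(\partial_{z_j}a_0)$ reproduces $\tfrac{1}{i}U\cdot a_0$ (using that $a_0$ has holomorphic Taylor expansion at $x_0$). The higher contributions $P^{\ell}\bigl((-R)^{\ell-1}f a_0 D\bigr)$ for $\ell=2,3$ do need to be inspected, but in normal coordinates the cubic remainder $R$ has no terms of type $(|\al|,|\be|)=(1,2)$ or $(2,1)$, so none of these hit $f$ with a derivative and they all fall into the $f=1$ bucket.
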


\begin{proof}
  We already know that $\Pi_k (f \Psi_k)$ is a Lagrangian state associated with
  $\Ga_{t_0}$. We compute its symbol up to $\bigo( \hbar^2)$ at $x_0$. It
  suffices to prove that this symbol has the form
\begin{gather} \label{eq:goal}
  (c_0 + \hbar c_1) f(x_0) \si (x_0)  +  \hbar( \tfrac{1}{i} \nabla^{L'}_{U} +
  \square f ) \si (x_0) + \bigo ( \hbar^2)
\end{gather}
where $c_0$ and $c_1$ are independent of $f$. Since for $f=1$, we have to
recover $\si(x_0)$, $c_0=1$ and $c_1 =0$ necessarily.

  Besides
  our normal coordinates $(z_i)$ and our assumption on $T_{x_0} \Ga_{t_0} $,
  let us introduce two holomorphic normal frames $v$ and $v'$ of $L$ and $L'$
  respectively. So $|v|= e^{-\frac{\varphi}{2}} $ with $\varphi $ a real function such
  that
  $$\varphi(x_0) = \partial_{z_j} \varphi(x_0) = \partial_{z_j}
  \partial_{z_k} \varphi (x_0) = 0, \qquad \forall j,k .$$
  Similarly $|v'|= e^{-\frac{\varphi'}{2}} $ with $\varphi'$ satisfying the
  same conditions. Notice that the curvature
of $L$ is equal to both $\partial \con{\partial} \varphi$ and $-i\omega$, so $\partial_{z_i}
\partial_{\con{z}_j} \varphi = G_{ij}$ for every $i,j$.  We can assume that
$v(x_0) = s(x_0)$ where $s$ is the section over $\Ga_{t_0}$ associated with
our Lagrangian state.  
   In the rest of the proof we write all the sections of $L^k \otimes L'$ in the frame $v^k
  \otimes v'$.

More details on the computations to come can be found in \cite[Sections 2.4, 2.5]{C2003}.  We have
  $$
  \Pi_k (x_0, x) = \Bigl( \frac{k}{2\pi} \Bigr)^n e^{  k \psi (x) +  \psi'(x) }
  p(x,k) + \bigo( k^{-\infty}) $$
where $\psi$ has the following Taylor expansion at $x_0$: $\psi (x) = \sum_{\be \in
  \N^n}\varphi_{0,\be}
\frac{\con{z}^\be}{\be!}$ with the notations  $\varphi_{\al, \be} =
\partial^\al_z \partial_{\con{z}}^\be \varphi (x_0)$, $\psi'$ has
the same Taylor expansion in terms of $\varphi'$ and $p(x,k) =  1 + k^{-1}
p_1 ( x)  + k^{-2} p_2(x) + \ldots$.

We have a similar expression for $\Psi_k$:
$$ \Psi_k ( x) = \Bigl( \frac{k}{2\pi} \Bigr)^{\frac{n}{4}} e^{k \rho (x) } a(x,k) + \bigo( k^{-\infty}) $$
where $\rho$ has the Taylor expansion $\rho (x) =  \frac{1}{2} \sum z_i^2 +
\sum_{|\al| \geqslant 3} \rho_{\al, 0} \frac{z^\al}{\al!}$. This follows from
the fact that the section $F$ entering in the definition
\eqref{eq:def_lagrangian} of $\Psi_k$ satisfies $F(x_0) = s(x_0)$ so that we
can assume that $\rho (x_0) =0$, $\nabla F |_{x_0} =0$ so that the first
derivatives of $\rho$ all vanish at $x_0$ and finally the second order
derivatives of $F$ at $x_0$ depend only on the linear data at $x_0$ \cite[Proposition
2.2]{C2003} which leads to the expression above. 

So it comes that
\begin{gather} \label{eq:uneintdeplus}
\Pi_k ( f \Psi_k) ( x_0) = \Bigl( \frac{k}{2\pi} \Bigr)^{\frac{5n}{4}}  \int e^{-k
  \phi } f(x) a(x,k) p(x,k) D(x) \, dz d\con{z}   + \bigo( k^{-\infty})
\end{gather} 
where $$\phi (x) = - \psi (x) + \varphi (x) - \rho (x) = |z|^2 - \tfrac{1}{2}
\sum z_i^2 + R (x)$$ 
with $R$ having the Taylor expansion at $x_0$
$$ R(x) = \sum_{\substack{\al\neq 0, \be \neq 0\\|\al| + |\be| \geqslant 3 }} \varphi_{\al, \be}
\frac{z^\al \con {z}^\be}{\al! \be !}   + \sum_{|\al| \geqslant 3}( -\rho_{\al,
  0}  + \varphi_{\al, 0 } )\frac{z^\al}{ \al!}.$$
Furthermore $ D(x) = e^{\psi'(x) -\varphi'(x) }\det (G_{ij}) = 1 + \bigo
(|z|^2).$

By applying the stationary phase method, we obtain the asymptotic expansion of
\eqref{eq:uneintdeplus}. At first order: 
$$ \Pi_k ( f \Psi_k) ( x_0) = C \Bigl( \frac{k}{2\pi}
\Bigr)^{\frac{n}{4}}  \left( f(x_0) a(x_0, k) + \bigo (k^{-1}) \right)$$
  where $C$ can be computed in terms of the Hessian determinant of
  $\phi$ at $x_0$. Actually it is shorter to compare with \eqref{eq:goal},
  which gives  $C=c_0 =1$.
  We now compute the second order term,
  $$ \Pi_k ( f \Psi_k) ( x_0) =  \Bigl( \frac{k}{2\pi}
\Bigr)^{\frac{n}{4}}  \left( f(x_0) a(x_0, k) + k^{-1} \varepsilon_k + \bigo (
k^{-2} )\right) $$ with
\[ \varepsilon_k =  \sum_{\ell=1}^{3} (\ell! \, (\ell-1) !)^{-1} P^{\ell} ( (-R)^{\ell -1} f a(\cdot ,k)
D ) (x_0)  \]
where $P = \sum ( \partial_{z_j} \partial_{\con{z}_j} + \tfrac{1}{2}
\partial_{\con{z}_j} \partial_{\con{z}_j})$.

Recall that we do not try to compute the terms $c_0$ and $c_1$ in
\eqref{eq:goal}, which means that we do not take into account the terms without
derivative on $a(x,k)$. Considering the form of the Taylor expansions of $R$
and $D$ given above and the fact that $\partial_{z_i}
\partial_{\con{z}_j} \varphi =
G_{ij}$ so that $\varphi_{\al, \be} =0$ when $(|\al|, |\be|) = (1,2)$ or
$(2,1)$,  we deduce after some investigations that we only have a single term
to consider which is $k^{-1} P ( f(x) a(x,k))$. Now $a(x,k) = a_0 (x) + \bigo
( k^{-1})$ and $a_0$ has the Taylor expansion of a holomorphic function at
$x_0$ because $\con \partial ( a_0 v')$ vanishes to infinite order along
$\Ga_{t_0}$ and the frame $v'$ is holomorphic. So
\begin{gather} \label{eq:toutcapourca}
P( f a_0) = (Pf) a_0+ \sum_j (\partial_{\con{z}_j} f) (\partial_{z_j}
a_0). 
\end{gather}
The Hamiltonian vector field of $f$ at $x_0$ being $\sum_j (- i f_{z_j}
\partial_{\con{z}_j} + i f_{\con{z}_j} \partial_{z_j})$ (see
\eqref{eq:champ_hamiltonien_complex}), and by using again that $\con \partial
a_0 =0$ at $x_0$, we obtain that the sum in \eqref{eq:toutcapourca} is
$\frac{1}{i} X_f a_0 = \frac{1}{i} U a_0$. Now the frame $v'$ being normal, its covariant derivative is
$0$ at $x_0$, so $\nabla_U ( a_0 v') = (U. a_0) v'$. 
\end{proof}

\begin{proof}[Proof of Theorem \ref{th:gamma}]
It suffices to consider the case where $T_{k,t} = T_k(H_t + k^{-1} H_t')$ for
some $H_t, H_t' \in \mathcal{C}^{\infty}(M)$. But the symbol of $k^{-1} T_k
(H_t') \Psi_k$ is $\hbar H_t' \sigma + \bigo(\hbar^2)$, so in fact we only
need to consider the case $H_t' = 0$. 

By Proposition
\ref{prop:Lag_state_symbol_calculus} and  the proof of Proposition \ref{prop:derive_etat_lag}, the symbol of $\frac{1}{ik} \partial_t
\Psi_k + T_k (H_t)  \Psi_k$ is
\begin{gather} \label{eq:hbar-tfrac1i-nabla_z}
\hbar \bigl( \tfrac{1}{i} (\nabla_Z +  \nabla_{U} ) +
  \square H_t  \bigr) \, \sigma +
  \bigo( \hbar^2), 
\end{gather}
where $Z$ and $U$ are the vector fields of $\Ga$ such that
  $Z(t,x) = \partial_t \mod T^{0,1} _xM$ and $U(t,x) = X_t(x) \mod
  T^{0,1}_xM$. Since here $Y(t,x) = (\partial_t, X_t)$ is tangent to $\Ga$, we
  have $ Z +  U =Y$ on $\Ga$.  By Proposition \ref{prop:delta}, $\square
H_t = \frac{1}{2} \theta + \frac{1}{2} \Delta H_t$ with $\Delta = \sum_{i,j} G^{ij}
\partial_{z_i} \partial_{\con{z}_j}$ (indeed, recall that the coordinates are normal at $x_0$ so $\Delta H_t(x_0) = \sum_{i}
H_{z_i \con{z}_i } (x_0)$). So the symbol in
\eqref{eq:hbar-tfrac1i-nabla_z} is
$$  \hbar \bigl( \tfrac{1}{i} \nabla_Y + \tfrac{1}{2} \theta + \tfrac{1}{2} \Delta
H_t \bigr) \, \si + \bigo ( \hbar^2)$$
which concludes the proof. 
\end{proof}

\appendix 

\section{Appendix: an explicit example}

Let $(M,\omega) = (\T^2 = \R^2 \slash \Lambda, \omega_{\T^2})$ where $\Lambda \subset \R^2$ is a lattice of symplectic volume $4\pi$. $(M,\omega)$ is naturally endowed with a prequantum line bundle $L$ induced by the line bundle $\R^2 \times \C \to \R^2$ with connection $\nabla = d - i \alpha$ where $\alpha = 2 \pi (p \ dq - q \ dp)$. Here $(p,q)$ are coordinates associated with a basis $(e,f)$ of $\Lambda$ with $\omega(e,f) = 4\pi$. In other words $\omega = 4\pi dp \wedge dq$; we will also work with the holomorphic coordinate $z = p + i q$, for which $\omega = 2 i \pi dz \wedge d\bar{z}$.

For $k \geq 1$, the quantum space $\Hil_k = H^{0}(M,L^k)$ identifies with the space of $\Lambda$-invariant sections of $\R^2 \times \C \to \R^2$, which is a space of theta functions with dimension $2k$. More precisely, the family $(\Psi_{\ell})_{0 \leq \ell \leq 2k-1}$ given by
\[ \Psi_{\ell}(z) = \frac{k^{\frac{1}{4}}}{\sqrt{2\pi}} \exp(2i\pi(\ell + k \Im(z))) \exp\left(-\frac{\pi \ell^2}{2k}\right) \vartheta_3(\pi(2kz + i \ell), \exp(-2k\pi)) \]
for all $z \in \C$ and $\ell \in \{0, \ldots, 2k-1 \}$, where $\vartheta_3(w,q) = 1 + 2 \sum_{n=1}^{+\infty} q^{n^2} \cos(2nw)$ is the Jacobi theta function, forms an orthonormal basis of $\Hil_k$. The diagonal operator defined as $T_k \Psi_{\ell} = \cos(\frac{\pi \ell}{k}) \Psi_{\ell}$ for every $\ell \in \{0, \ldots, 2k-1 \}$ is a Berezin-Toeplitz operator with principal symbol $H: (p,q) \mapsto \cos(2\pi q)$ and vanishing subprincipal symbol. For more details, see \cite[Section 3.1, Appendix]{CM}.

On the one hand, one can easily compute numerically the kernel of the quantum propagator $U_{k,t} = \exp(-i k t T_k)$ by using the formula
\begin{equation} U_{k,t}(w,z) = \sum_{\ell = 0}^{2k-1} \exp\left(-ikt \cos\left(\frac{\pi \ell}{k}\right)\right) \Psi_{\ell}(w) \overline{\Psi_{\ell}(z)}  \label{eq:kernel_torus} \end{equation}
and the above expression of $\Psi_{\ell}$ (in practice, we use the built-in commands for Jacobi theta functions in the mpmath library for Python). On the other hand, the coefficients in Equation \eqref{eq:leading_order_prop_quant} can be explicitly computed as follows. Since the subprincipal symbol of $T_k$ vanishes, it suffices to compute $\rho_t$ and $\phi_t^L$. First, the parallel transport term reads
\[ \mathcal{T}_t^L(p,q) = \exp\left(i \int_0^t \alpha_{\phi_s(p,q)}(X(\phi_s(p,q))) ds \right) = \exp\left(- i \pi t q \sin(2 \pi q) \right)  \]
since $X(p,q) = \frac{1}{2} \sin(2\pi q) \partial_p$ and $\phi_t(p,q) = [p + \frac{t}{2} \sin(2\pi q), q]$, and we obtain 
\[ \phi_t^L(p,q) = \exp(-it\cos(2\pi q)) \mathcal{T}_t^L(p,q) = \exp\left(- i t \left(\cos(2\pi q) + \pi q \sin(2 \pi q) \right) \right). \]
Second, one readily checks that $(T_{(p,q)} \phi_t)^{1,0}$ is the operator of multiplication by $1 - \frac{i\pi t}{2} \cos(2 \pi q)$. Since the connection on the canonical bundle is trivial, this yields
\[ \rho_t(p,q)^{\frac{1}{2}} = \left( 1 - \frac{i\pi t}{2} \cos(2 \pi q) \right)^{-\frac{1}{2}} =  \frac{\exp\left( \frac{i}{2} \arctan\left( \frac{\pi t}{2} \cos(2\pi q) \right) \right)}{\sqrt{1 + \frac{\pi^2 t^2}{4} \cos^2(2\pi q) }}.\]
So we finally obtain that for $z = p + i q$,
\begin{equation} U_{k,t}(\phi_t(z),z) \sim \frac{k \exp\left( i \left( \frac{1}{2} \arctan\left( \frac{\pi t}{2} \cos(2\pi q) \right) - kt \left( \cos(2\pi q) + \pi q \sin(2\pi q) \right) \right) \right)}{2\pi \sqrt{1 + \frac{\pi^2 t^2}{4} \cos^2(2\pi q) }} .  \label{eq:kernel_torus_theo}  \end{equation}

We compare this theoretical equivalent with the numerical value in Figures \ref{fig:prop_real_0301}, \ref{fig:prop_imag_0301}, \ref{fig:prop_real_0507}, \ref{fig:prop_real_0301_small_times} and \ref{fig:prop_real_0301_large_times}. In these computations, we fix $k$ and $(p,q)$, and plot the real part of the kernel of the propagator evaluated at $(\phi_t(z),z)$ with $z = p+iq$, as a function of $t$; we also plot the imaginary part of this kernel only for one set of parameters, since the behaviour is very similar to the one of the real part. In all these figures, the blue diamonds represent the numerical values obtained from Equation \eqref{eq:kernel_torus} while the solid red line corresponds to the right hand side of Equation \eqref{eq:kernel_torus_theo}. Note that a priori the $\bigo(k^{-1})$ remainder may depend on $t$, so once $k$ and $(p,q)$ are fixed, the approximation may become less precise as $t$ increases. In Figures \ref{fig:prop_real_0301_small_times} and \ref{fig:prop_real_0301_large_times}, we display the behaviour at small and (relatively) large times. Investigating the $k$-dependent times up to which the approximation in Equation \eqref{eq:leading_order_prop_quant} remains valid is a classical topic in the semiclassical literature, that we do not consider in the present paper.

\begin{center}
\begin{figure}[H]
\includegraphics[scale=0.35]{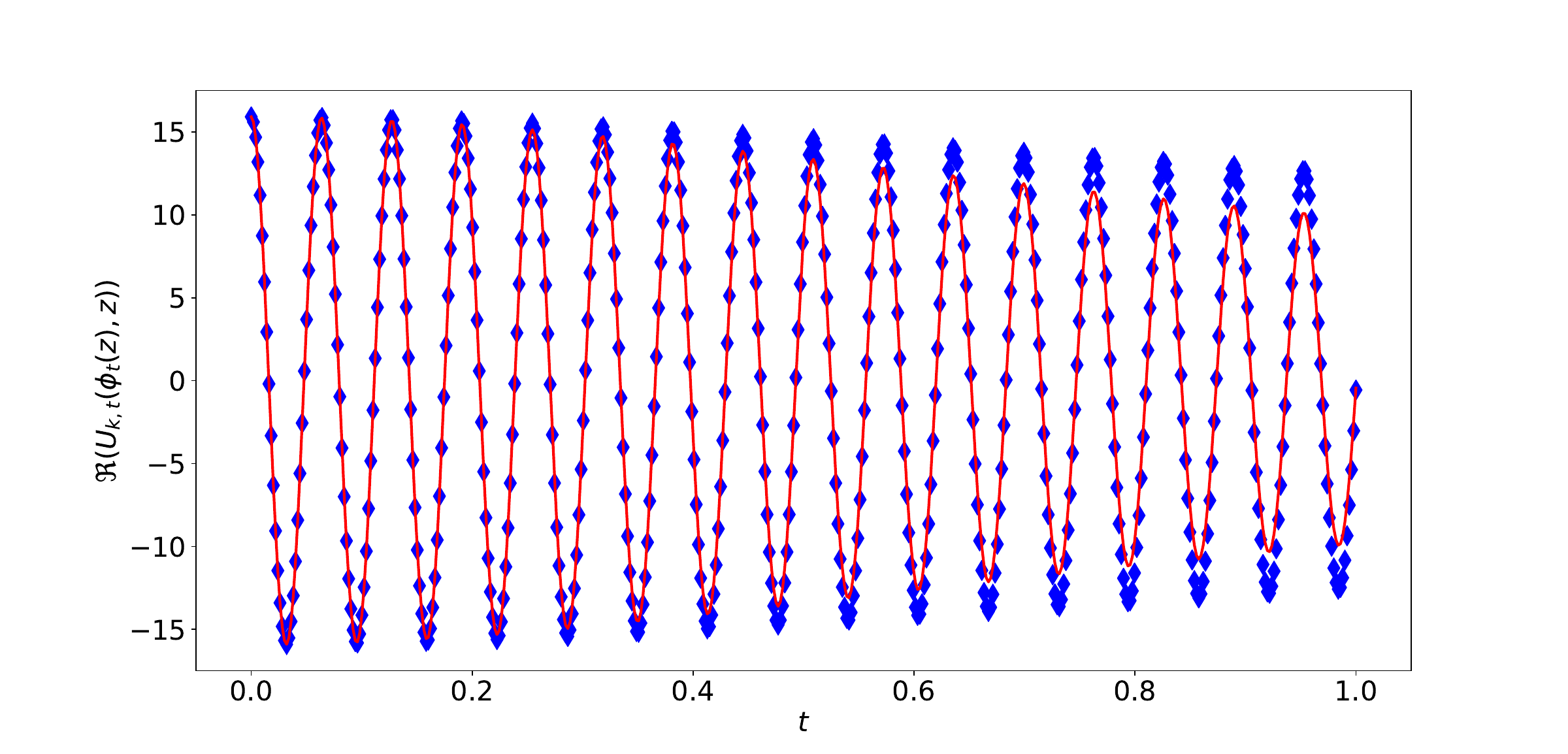}
\caption{Real part of $U_{k,t}(\phi_t(z),z)$ for $k = 100$ and $z=p+iq$ with $(p,q) = (0.3,0.1)$, for $0 \leq t \leq 1$.}
\label{fig:prop_real_0301}
\end{figure}
\end{center}

\begin{center}
\begin{figure}[H]
\includegraphics[scale=0.35]{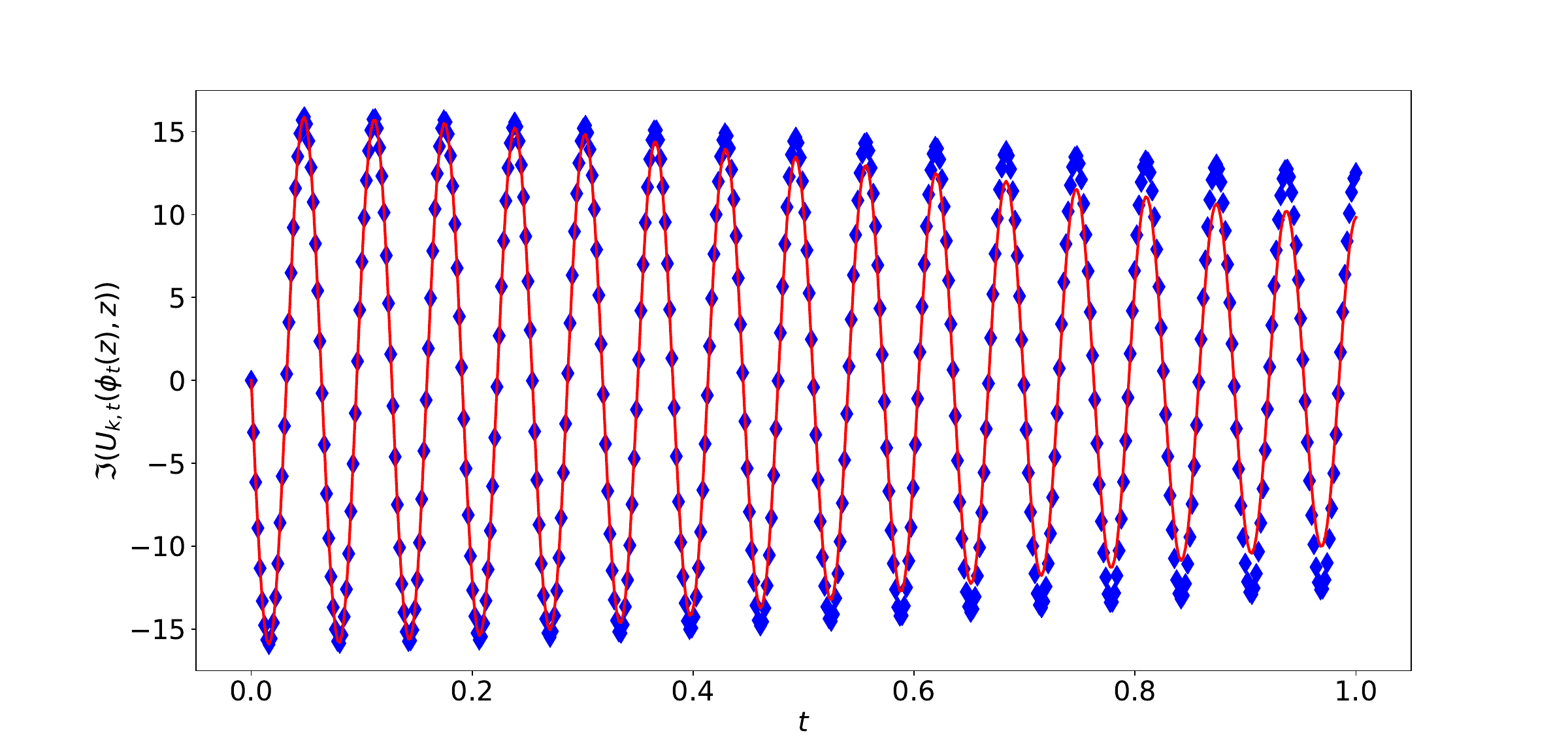}
\caption{Imaginary part of $U_{k,t}(\phi_t(z),z)$ for $k = 100$ and $z=p+iq$ with $(p,q) = (0.3,0.1)$, for $0 \leq t \leq 1$.}
\label{fig:prop_imag_0301}
\end{figure}
\end{center}

\begin{center}
\begin{figure}[H]
\includegraphics[scale=0.35]{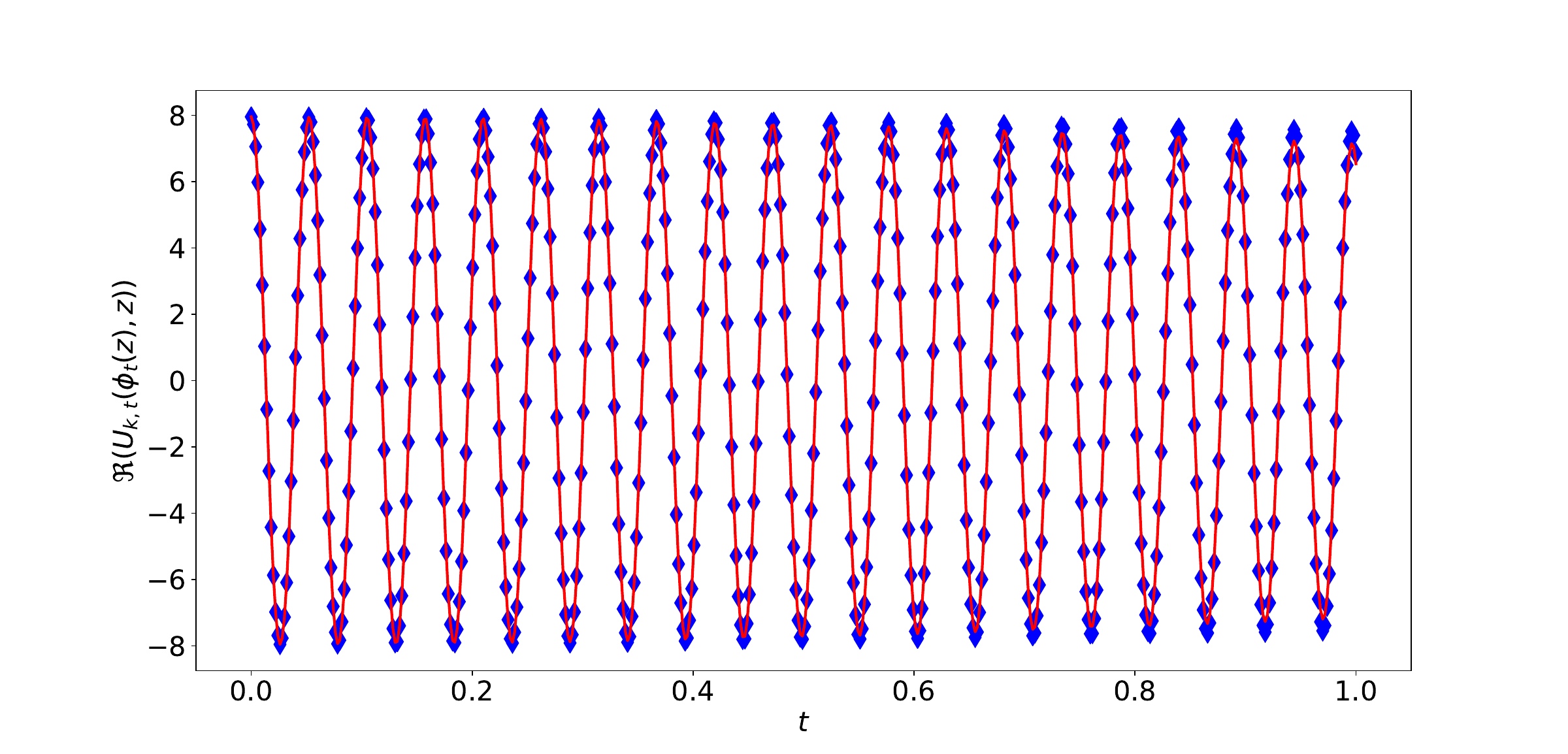}
\caption{Real part of $U_{k,t}(\phi_t(z),z)$ for $k = 50$ and $z=p+iq$ with $(p,q) = (0.5,0.7)$, for $0 \leq t \leq 1$.} 
\label{fig:prop_real_0507}
\end{figure}
\end{center}

\begin{center}
\begin{figure}[H]
\includegraphics[scale=0.3]{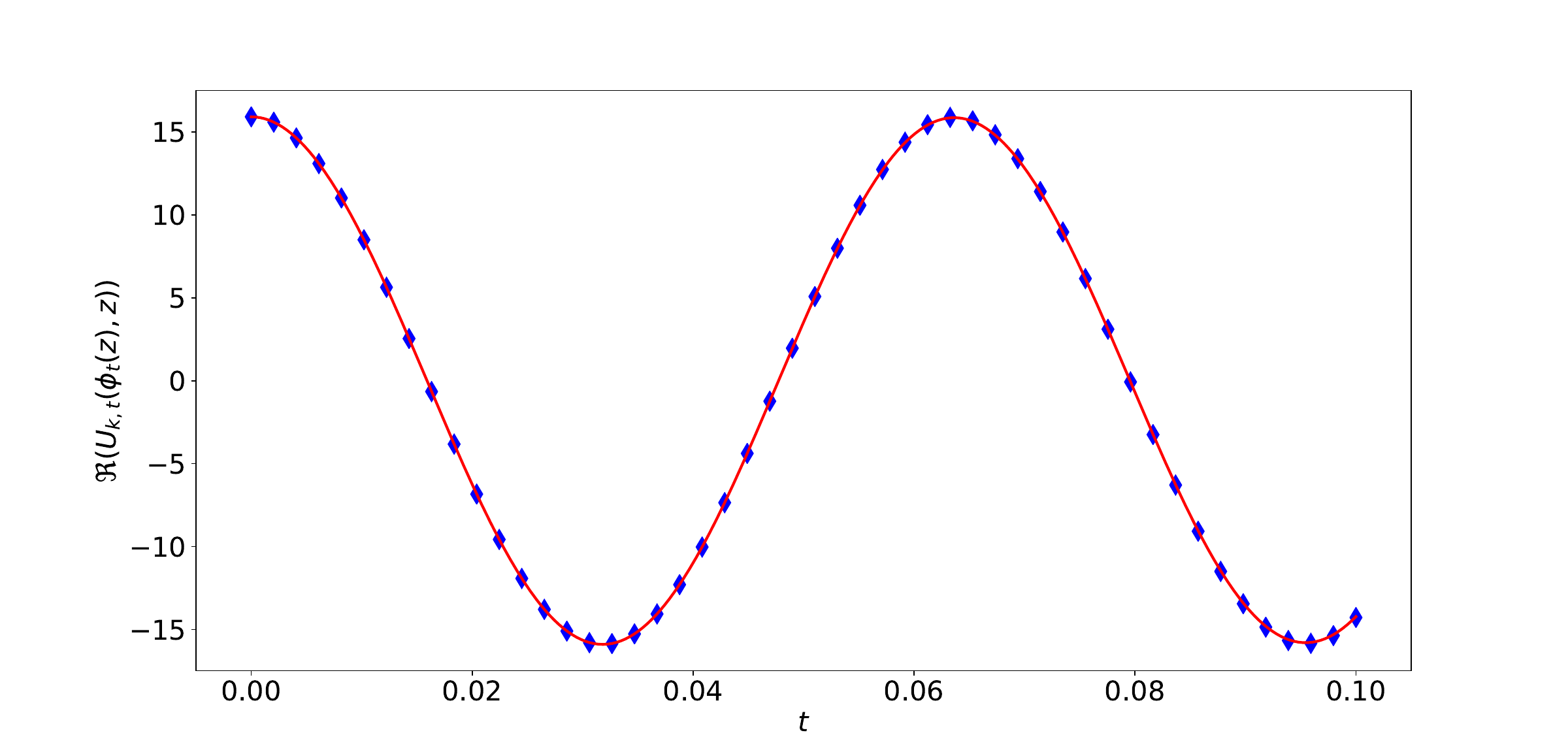}
\caption{Real part of $U_{k,t}(\phi_t(z),z)$ for $k = 100$ and $z=p+iq$ with $(p,q) = (0.3,0.1)$, for $0 \leq t \leq 0.1$.}
\label{fig:prop_real_0301_small_times}
\end{figure}
\end{center}

\begin{center}
\begin{figure}[H]
\includegraphics[scale=0.3]{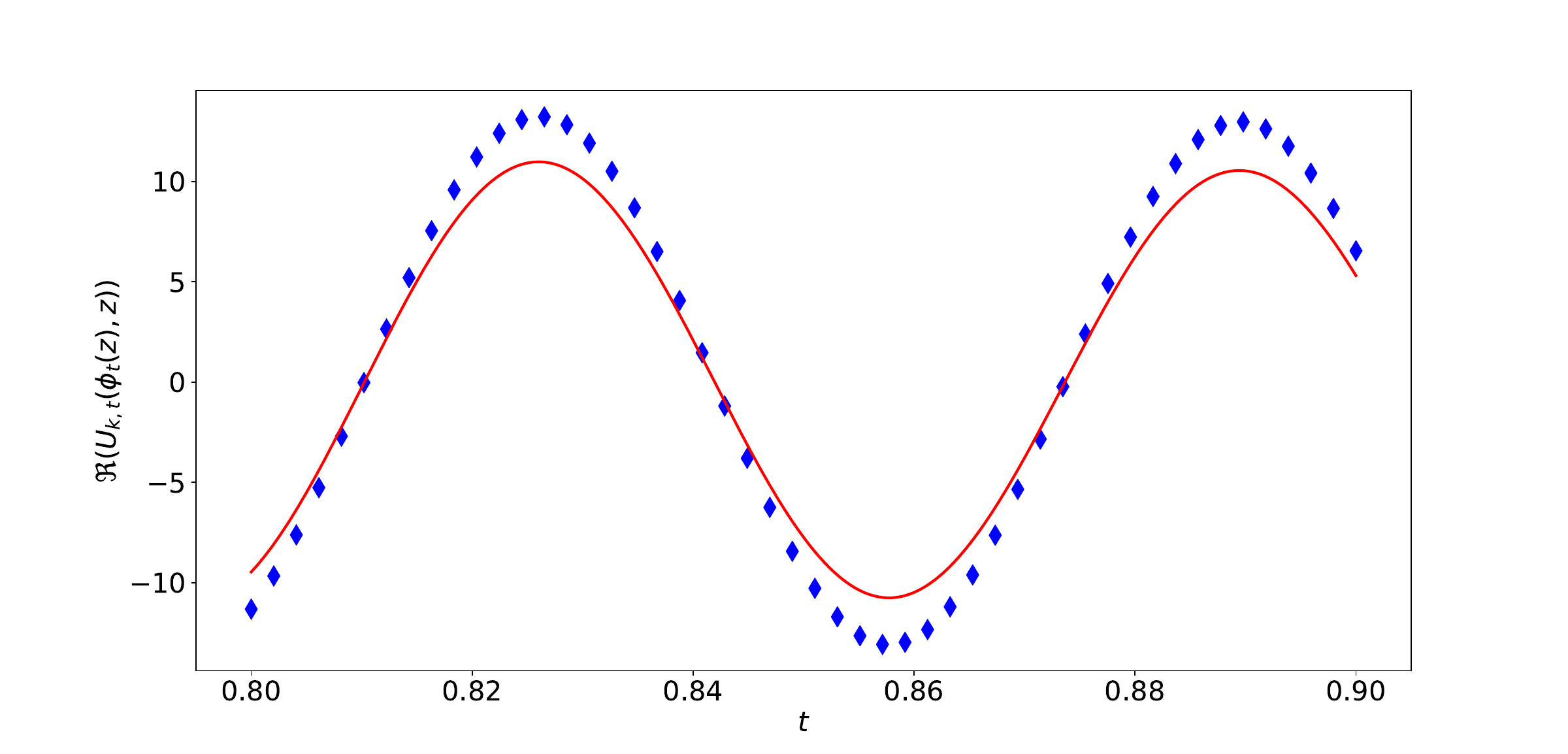}
\caption{Real part of $U_{k,t}(\phi_t(z),z)$ for $k = 100$ and $z=p+iq$ with $(p,q) = (0.3,0.1)$, for $0.8 \leq t \leq 0.9$.}
\label{fig:prop_real_0301_large_times}
\end{figure}
\end{center}

 \bibliographystyle{abbrv}
\bibliography{propagation}

\begin{thebibliography}{10}

\bibitem{Biq}
O.~Biquard.
\newblock {$SL(\infty,\Bbb R)$}, {H}iggs bundles and quantization.
\newblock In {\em Geometry and physics. {V}ol. {II}}, pages 419--431. Oxford
  Univ. Press, Oxford, 2018.

\bibitem{BMS}
M.~Bordemann, E.~Meinrenken, and M.~Schlichenmaier.
\newblock Toeplitz quantization of {K}\"{a}hler manifolds and {${\rm gl}(N)$},
  {$N\to\infty$} limits.
\newblock {\em Comm. Math. Phys.}, 165(2):281--296, 1994.

\bibitem{BPU}
D.~Borthwick, T.~Paul, and A.~Uribe.
\newblock Semiclassical spectral estimates for {T}oeplitz operators.
\newblock {\em Ann. Inst. Fourier (Grenoble)}, 48(4):1189--1229, 1998.

\bibitem{BoGu}
L.~Boutet~de Monvel and V.~Guillemin.
\newblock {\em The spectral theory of {T}oeplitz operators}, volume~99 of {\em
  Annals of Mathematics Studies}.
\newblock Princeton University Press, Princeton, NJ, 1981.

\bibitem{BoSj}
L.~Boutet~de Monvel and J.~Sj{\"o}strand.
\newblock Sur la singularit\'e des noyaux de {B}ergman et de {S}zeg{\H o}.
\newblock In {\em Journ\'ees: \'{E}quations aux {D}\'eriv\'ees {P}artielles de
  {R}ennes (1975)}, pages 123--164. Ast\'erisque, No. 34--35. Soc. Math.
  France, Paris, 1976.

\bibitem{ChaPhD}
L.~Charles.
\newblock {\em {Aspects semi-classiques de la quantification
  g{\'e}om{\'e}trique}}.
\newblock Theses, {Universit{\'e} Paris Dauphine - Paris IX}, Dec. 2000.
\newblock \url{https://theses.hal.science/tel-00001289}.

\bibitem{C2003op}
L.~Charles.
\newblock Berezin-{T}oeplitz operators, a semi-classical approach.
\newblock {\em Comm. Math. Phys.}, 239(1-2):1--28, 2003.

\bibitem{C2003}
L.~Charles.
\newblock Quasimodes and {B}ohr-{S}ommerfeld conditions for the {T}oeplitz
  operators.
\newblock {\em Comm. Partial Differential Equations}, 28(9-10):1527--1566,
  2003.

\bibitem{C2006}
L.~Charles.
\newblock Symbolic calculus for {T}oeplitz operators with half-form.
\newblock {\em J. Symplectic Geom.}, 4(2):171--198, 2006.

\bibitem{C2007}
L.~Charles.
\newblock Semi-classical properties of geometric quantization with metaplectic
  correction.
\newblock {\em Comm. Math. Phys.}, 270(2):445--480, 2007.

\bibitem{C2010}
L.~Charles.
\newblock A {L}efschetz fixed point formula for symplectomorphisms.
\newblock {\em J. Geom. Phys.}, 60(12):1890--1902, 2010.

\bibitem{CM}
L.~Charles and J.~March\'{e}.
\newblock Knot state asymptotics {I}: {AJ} conjecture and {A}belian
  representations.
\newblock {\em Publ. Math. Inst. Hautes \'{E}tudes Sci.}, 121:279--322, 2015.

\bibitem{Don}
S.~K. Donaldson.
\newblock Scalar curvature and projective embeddings. {I}.
\newblock {\em J. Differential Geom.}, 59(3):479--522, 2001.

\bibitem{DuGu}
J.~J. Duistermaat and V.~W. Guillemin.
\newblock The spectrum of positive elliptic operators and periodic
  bicharacteristics.
\newblock {\em Invent. Math.}, 29(1):39--79, 1975.

\bibitem{Glau}
R.~J. Glauber.
\newblock Coherent and incoherent states of the radiation field.
\newblock {\em Phys. Rev. (2)}, 131:2766--2788, 1963.

\bibitem{GuSt}
V.~Guillemin and S.~Sternberg.
\newblock Geometric quantization and multiplicities of group representations.
\newblock {\em Invent. Math.}, 67(3):515--538, 1982.

\bibitem{Gu_FIO}
V.~W. Guillemin.
\newblock 25 years of {F}ourier integral operators.
\newblock In {\em Mathematics past and present}, pages 1--21. Springer, Berlin,
  1994.

\bibitem{Hor_dbar}
L.~H\"{o}rmander.
\newblock {$L^{2}$} estimates and existence theorems for the {$\bar \partial $}
  operator.
\newblock {\em Acta Math.}, 113:89--152, 1965.

\bibitem{Hor1}
L.~H\"{o}rmander.
\newblock The spectral function of an elliptic operator.
\newblock {\em Acta Math.}, 121:193--218, 1968.

\bibitem{Ho}
L.~H{\"o}rmander.
\newblock {\em The analysis of linear partial differential operators. {I}},
  volume 256 of {\em Grundlehren der Mathematischen Wissenschaften [Fundamental
  Principles of Mathematical Sciences]}.
\newblock Springer-Verlag, Berlin, second edition, 1990.
\newblock Distribution theory and Fourier analysis.

\bibitem{Ioos}
L.~Ioos.
\newblock Geometric quantization of {H}amiltonian flows and the {G}utzwiller
  trace formula.
\newblock {\em Lett. Math. Phys.}, 110(7):1585--1621, 2020.

\bibitem{KleMaMaWi}
S.~Klevtsov, X.~Ma, G.~Marinescu, and P.~Wiegmann.
\newblock Quantum {H}all effect and {Q}uillen metric.
\newblock {\em Comm. Math. Phys.}, 349(3):819--855, 2017.

\bibitem{Kod}
K.~Kodaira.
\newblock On a differential-geometric method in the theory of analytic stacks.
\newblock {\em Proc. Nat. Acad. Sci. U.S.A.}, 39:1268--1273, 1953.

\bibitem{Lieb}
E.~H. Lieb.
\newblock The classical limit of quantum spin systems.
\newblock {\em Comm. Math. Phys.}, 31:327--340, 1973.

\bibitem{MeSj}
A.~Melin and J.~Sj\"{o}strand.
\newblock Fourier integral operators with complex-valued phase functions.
\newblock pages 120--223. Lecture Notes in Math., Vol. 459, 1975.

\bibitem{RubZel}
Y.~A. Rubinstein and S.~Zelditch.
\newblock The {C}auchy problem for the homogeneous {M}onge-{A}mp\`ere equation,
  {I}. {T}oeplitz quantization.
\newblock {\em J. Differential Geom.}, 90(2):303--327, 2012.

\bibitem{Shu}
M.~A. Shubin.
\newblock {\em Pseudodifferential operators and spectral theory}.
\newblock Springer-Verlag, Berlin, second edition, 2001.
\newblock Translated from the 1978 Russian original by Stig I. Andersson.

\bibitem{ZeZh}
S.~Zelditch and P.~Zhou.
\newblock Pointwise {W}eyl law for partial {B}ergman kernels.
\newblock In {\em Algebraic and analytic microlocal analysis}, volume 269 of
  {\em Springer Proc. Math. Stat.}, pages 589--634. Springer, Cham, 2018.

\end{thebibliography}

\vspace{15mm}

\noindent
{\bf Laurent Charles} \\
Sorbonne Université, Université de Paris, CNRS \\
Institut de Mathématiques de Jussieu-Paris Rive Gauche \\
F-75005 Paris, France \\
{\em E-mail:} \texttt{laurent.charles@imj-prg.fr}\\

\noindent
{\bf Yohann Le Floch} \\
Institut de Recherche Math\'ematique Avanc\'ee\\
UMR 7501, Universit\'e de Strasbourg et CNRS\\
7 rue Ren\'e Descartes\\
67000 Strasbourg, France\\
{\em E-mail:} \texttt{ylefloch@unistra.fr}

\end{document}